\title{Beyond G\"ollnitz' Theorem II: arbitrarily many primary colors}
\author{Isaac KONAN} \address{IRIF \\ University
of Paris Diderot \\  Paris,  75013, France}
\email{konan@irif.fr}
\date{}
\newcommand{\m}{\medbreak}
\newcommand{\bi}{\bigbreak}
\definecolor{foge}{rgb}{0.1, 0.6, 0.1}
\newcommand{\So}{\textbf{Step 1 }}
\newcommand{\N}{\mathbb{N}}
\newcommand{\Soo}{\textbf{Step 1}}
\newcommand{\St}{\textbf{Step 2 }}
\newcommand{\Stt}{\textbf{Step 2}}
\newcommand{\Thm}[1]{Theorem \ref{#1}}
\newcommand{\Lem}[1]{Lemma \ref{#1}}
\newcommand{\Prp}[1]{Proposition \ref{#1}}
\newcommand{\Sct}[1]{Section \ref{#1}}
\newcommand{\Expl}[1]{Example \ref{#1}}
\newcommand{\Od}{\mathcal{O}}
\newcommand{\C}{\mathcal{C}}
\newcommand{\Br}{\mathbf{Br}_\nu}
\newcommand{\ts}{\mathcal{TS}}
\newcommand{\Cc}{\mathcal{C}_\rtimes}
\newcommand{\Ccc}{\mathcal{SP}_\rtimes}
\newcommand{\Pp}{\mathcal{P}}
\newcommand{\Sc}{\mathcal{S}}
\newcommand{\E}{\mathcal{E}}
\newcommand{\Ee}{\mathcal{E}_1}
\newcommand{\Eee}{\mathcal{E}_2}
\newcommand{\nt}{\displaystyle{\not\triangleright\,\,\,}}
\newcommand{\la}{\lambda}
\newcommand{\sss}{\{1,\ldots,s\}}
\newcommand{\ssss}{\{1,\ldots,s-1\}}
\newcommand{\pat}{\mathbf{pattern}}
\numberwithin{equation}{section}
\newtheorem{theo}{Theorem}[section]
\newtheorem{prop}[theo]{Proposition}
\newtheorem{lem}[theo]{Lemma}
\newtheorem{cor}[theo]{Corollary}
\newtheorem{rem}[theo]{Remark}
\newtheorem{ex}[theo]{Example}
\newtheorem{exs}[theo]{Examples}
\theoremstyle{definition} \newtheorem{deff}[theo]{Definition}
\begin{document}
\maketitle
\begin{abstract}
In $2003$, Alladi, Andrews and Berkovich proved a four
parameter partition identity lying beyond a celebrated identity of
G\"ollnitz.   Since then it has been an open problem to extend their
work to five or more parameters.   In part I of this pair of papers, we took a first step in this direction by giving a bijective proof of a reformulation of their result.  We
introduced forbidden patterns, bijectively proved a ten-colored partition identity,  and then related, by another bijection, our identity to the Alladi-Andrews-Berkovich identity. 
\\In this second paper, we state and bijectively prove an $\frac{n(n+1)}{2}$-colored partition identity beyond G\"ollnitz' theorem for any number $n$ of primary colors, along with the full set of the $\frac{n(n-1)}{2}$ secondary colors as the product of two distinct primary colors, generalizing the identity proved in the first paper.  Like the ten-colored partitions, our family of $\frac{n(n+1)}{2}$-colored partitions satisfy some simple minimal difference conditions while avoiding forbidden patterns. Furthermore, the $\frac{n(n+1)}{2}$-colored partitions have some remarkable properties, as they can be uniquely represented by oriented rooted forests which record the steps of the bijection.
\end{abstract}
\section{Introduction and Statements of Results}
\subsection{History}
A partition of a positive integer $n$ is a non-increasing sequence of positive integers whose sum is equal to $n$. For example, the partitions of $7$ are 
\[(7),(6,1),(5,2),(5,1,1),(4,3),(4,2,1),(4,1,1,1),(3,3,1),(3,2,2),(3,2,1,1),\]
\[(3,1,1,1,1),(2,2,2,1),(2,2,1,1,1),(2,1,1,1,1,1)\,\,\text{and}\,\,(1,1,1,1,1,1,1)\,\cdot\] 
The study of partition identities has been a center of interest for centuries, dating back to
Euler's proof of the following identity, 
\begin{equation}
(-q;q)_\infty = \frac{1}{(q;q^2)_\infty}\,,
\end{equation}
where \[(x;q)_m = \prod_{k=0}^{m-1} (1-xq^k)\,\,,\]
for  any $m\in \mathbb{N}\cup \{\infty\}$ and $a,q$ such that $|q|<1$. From a combinatorial viewpoint, this can be 
formulated by the following statement: there are as many partitions of $n$ into distinct parts as partitions of $n$ into odd parts.
\bi 
The study of integer partitions underwent a significant advancement with the works of Rogers-Ramanujan at the beginning of the past century. Following in their tracks, Schur found another simple identity \cite{Sc26}, stating that the number of partitions of $n$ into distinct parts congruent to $\pm 1 \mod 3$ is equal to the number of partitions of $n$ where parts differ by at least three and multiples of three differ by at least six. In the spirit of Schur's identity, G\"ollnitz proved in \cite{GO67} that the number of partitions of $n$ into distinct parts  congruent to $2,4,5 \mod 6$ is equal to the number of partitions of $n$ into parts  different from $1$ and $3$, and where parts differ by at least six with equality only if parts are congruent to $2,4,5 \mod 6$. Looking at the set of partitions involved in the previous identities, one may view G\"ollnitz' identity as embedded in Schur's identity. It also works in the opposite way, as we now describe. 
\bi In seminal work of Alladi-Gordon \cite{AG93}, they pointed out a refinement of Schur's identity,
where they introduced the weighted words method with the use of two primary colors $a,b$ along with a secondary color $ab$. Later, Alladi-Andrews-Gordon \cite{AAG95} found a refinement of
G\"ollnitz' identity, with the use of weighted words with three primary colors $a,b,c$ and three secondary colors $ab,ac,bc$, which indeed implies the refinement of Schur's identity. Further explanation of these two refinements is given in the first part of this series \cite{IK}.
\bi
It was an open problem to find a partition identity beyond G\"ollnitz' theorem arising from four primary colors. In \cite{AAB03}, Alladi, Andrews, and Berkovich solved this problem. Their result  uses four primary colors, the full set of
secondary colors, along with one quaternary color $abcd$, and can be described as follows.
We consider parts that occur in eleven colors $\{a,b,c,d,ab,ab,ad,bc,bd,cd,abcd\}$ and ordered as follows:
\begin{equation}\label{quat}
1_{abcd}<1_{ab}<1_{ac}<1_{ad}<1_a<1_{bc}<1_{bd}<1_b<1_{cd}<1_c<1_d<2_{abcd}<\cdots\,\cdot
\end{equation}
Let us consider the partitions with the size of the secondary parts greater than one and satisfying the minimal difference conditions in 
\begin{equation}\label{tab2}
\begin{array}{|c|cccc|ccc|cc|c|}
\hline
_{\lambda_i}\setminus^{\lambda_{i+1}}&ab&ac&ad&a&bc&bd&b&cd&c&d\\
\hline
ab&2&2&2&2&2&2&2&2&2&2\\
ac&1&2&2&2&2&2&2&2&2&2\\
ad&1&1&2&2&2&2&2&2&2&2\\
a&1&1&1&1&2&2&2&2&2&2\\
\hline
bc&1&1&1&1&2&2&2&2&2&2\\
bd&1&1&1&1&1&2&2&2&2&2\\
b&1&1&1&1&1&1&1&2&2&2\\
\hline
cd&1&1&1&1&1&1&1&2&2&2\\
c&1&1&1&1&1&1&1&1&1&2\\
\hline
d&1&1&1&1&1&1&1&1&1&1\\
\hline
\end{array}\,,
\end{equation}
and such that parts with color $abcd$ differ by at least $4$, and the smallest part with color $abcd$ is at least equal to  $4+2\tau-\chi(1_a\text{ is a part})$, where $\tau$ is the number of primary and secondary parts in the partition. Here, $\chi(A)$ equals $1$ if the proposition $A$ is true and $0$ if not, and the term \textit{minimal difference conditions} means that, for $\lambda = (\lambda_1,\ldots,\lambda_s)$ with the parts $\la_k$ colored by $c(\la_k)$, we have for all $i\in \ssss$ that the value $\lambda_i-\lambda_{i+1}$ is at least equal to the value corresponding to the row $c(\la_i)$ and the column $c(\la_{i+1})$.
Their theorem is then stated as follows.
\begin{theo}[Alladi-Andrews-Berkovich]\label{th2}
Let $u,v,w,t,n$ be non-negative integers. Denote by $A(u,v,w,t,n)$ the number of partitions of $n$ into $u$ distinct parts with color $a$, $v$ distinct parts with color $b$, $w$ distinct parts with color $c$ and $t$ distinct parts with color $d$, and denote by $B(u,v,w,t,n)$ the number of partitions of $n$ satisfying the conditions above, with $u$ parts with color $a,ab,ac,ad$ or $abcd$, $v$ parts with color $b,ab,bc,bd$ or $abcd$, $w$ parts with color $c,ac,bc,cd$ or $abcd$ and $t$ parts with color $d,ad,bd,cd$ or $abcd$.
We then have $A(u,v,w,t,n)=B(u,v,w,t,n)$ and the identity
\begin{equation}
\sum_{u,v,w,t,n\geq 0} B(u,v,w,t,n)a^ub^vc^wd^tq^n =
(-aq;q)_\infty (-bq;q)_\infty(-cq;q)_\infty(-dq;q)_\infty\,\cdot
\end{equation}
\end{theo}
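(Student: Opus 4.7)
The plan is to establish the equality $A(u,v,w,t,n)=B(u,v,w,t,n)$ bijectively, since the generating function identity on the $A$-side is a direct consequence of Euler's formula: the product $(-aq;q)_\infty(-bq;q)_\infty(-cq;q)_\infty(-dq;q)_\infty$ is manifestly the generating function for partitions into distinct parts each colored by exactly one of $a,b,c,d$, weighted by $a^ub^vc^wd^tq^n$. Therefore the substance of the theorem is the combinatorial identity $A=B$, and every step will be required to preserve the monomial $a^ub^vc^wd^tq^n$ exactly, treating the secondary color $xy$ as contributing one unit to both $x$- and $y$-counts and the quaternary color $abcd$ as contributing one unit to each of the four counts.

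My first attempt would be a local iterative merging procedure. Starting from a partition $\lambda$ with parts in the four primary colors ordered by the rule (1.3), I would scan consecutive pairs $(\lambda_i,\lambda_{i+1})$ and, whenever their gap violates the table (1.4), fuse them into a single part whose color is the product of the two primary colors (e.g.\ $a\cdot b=ab$) at the unique integer value making the weight match. Since the secondary color's $(a,b,c,d)$-exponent profile is the sum of those of its two primary factors, each such merge preserves $a^ub^vc^wd^tq^n$. One then verifies termination, verifies that the resulting partition satisfies the ten-by-ten table (1.4), and builds an inverse ``splitting'' rule by specifying, for each secondary part, how it cleaves back into two primaries at the correct location. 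This is precisely the mechanism that, in the ten-color sub-problem (no quaternary color), yields the reformulation proved bijectively in Part~I.

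The main obstacle is the quaternary color $abcd$ together with its unusual lower bound ``smallest $abcd$-part $\geq 4+2\tau-\chi(1_a\text{ is a part})$'', where $\tau$ is the number of primary and secondary parts. This growing bound signals that quaternary parts do not arise from merging a single consecutive pair but from a second-order collapse of four primary-or-secondary parts whose combined weight is then stacked above the rest of the partition. Following the two-step strategy announced in the abstract, my plan is to prove first an auxiliary identity in the ten primary and secondary colors, using the minimal difference conditions (1.4) together with a finite list of forbidden local patterns which encode the configurations that would otherwise trigger a quaternary collapse, and no quaternary color at all. Then a second bijection, which identifies each forbidden pattern with an encoded $abcd$-part and transports it to the top of the partition with the prescribed spacing, should translate the ten-colored identity into Theorem \ref{th2}.

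The delicate bookkeeping in the second bijection will be the hard part: each extracted $abcd$-part must be placed so as to respect a gap of at least $4$ from the next $abcd$-part above it, the cumulative shift of the remaining primaries and secondaries must exactly absorb the $2\tau$ contribution to the lower bound, and the single unit correction $-\chi(1_a\text{ is a part})$ must come out of a boundary case (presumably the only colored $1$ permitted to appear in a $B$-partition). Matching these three quantities simultaneously, while keeping the inverse well-defined and the monomial invariant, is where the proof will require the most care.
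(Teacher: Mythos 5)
Your two-step plan---first a bijective proof of a ten-colored identity governed by the minimal difference table together with a small list of forbidden patterns, then a second bijection converting that identity into the quaternary-colored statement---is exactly the strategy the authors followed in Part~I of this series (\cite{IK}), which the present paper cites as the bijective route to Theorem~\ref{th2}; the paper itself does not reprove Theorem~\ref{th2}, but records that Alladi-Andrews-Berkovich's original argument was an intricate $q$-series identity and that Part~I establishes the equivalence via Theorem~\ref{th1}. Your sketch leaves the second bijection as a plan rather than a proof (as you acknowledge), but the overall architecture, the Bressoud-style merging for the ten-color core, and your identification of the growing lower bound $4+2\tau-\chi(1_a\text{ is a part})$ on quaternary parts as the central bookkeeping difficulty all match the published approach.
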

Note that when $d=0$, we recover Alladi-Andrews-Gordon's refinement of G\"ollnitz' identity (see \cite{IK} for more details). Their main tool was an intricate $q$-series identity.
\bi
In part I of this series \cite{IK}, we showed an equivalent version of \Thm{th2}. In fact,  
we supposed that the parts occur in only primary colors $a,b,c,d$ and secondary colors $ab,ac,ad,bc,bd,cd$, and are ordered as in \eqref{quat} by omitting quaternary parts:
\begin{equation}\label{cons}
1_{ab}< 1_{ac}< 1_{ad} < 1_{a}< 1_{bc} < 1_{bd}< 1_b< 1_{cd}< 1_c<1_d< 2_{ab}<\cdots\,\cdot
\end{equation}
We then considered the partitions with the size of the secondary parts greater than one and satisfying the minimal difference conditions in
\begin{equation}\label{tab1}
\begin{array}{|c|cccc|ccc|cc|c|}
\hline
_{\lambda_i}\setminus^{\lambda_{i+1}}&ab&ac&ad&a&bc&bd&b&cd&c&d\\
\hline
ab&2&2&2&2&2&2&2&2&2&2\\
ac&1&2&2&2&2&2&2&2&2&2\\
ad&1&1&2&2&\underline{1}&2&2&2&2&2\\
a&1&1&1&1&2&2&2&2&2&2\\
\hline
bc&1&1&1&1&2&2&2&2&2&2\\
bd&1&1&1&1&1&2&2&2&2&2\\
b&1&1&1&1&1&1&1&2&2&2\\
\hline
cd&\underline{0}&1&1&1&1&1&1&2&2&2\\
c&1&1&1&1&1&1&1&1&1&2\\
\hline
d&1&1&1&1&1&1&1&1&1&1\\
\hline
\end{array}\,,
\end{equation} 
and which avoid the forbidden patterns 
\begin{equation}
((k+2)_{cd},(k+2)_{ab},k_c),((k+2)_{cd},(k+2)_{ab},k_d), ((k+2)_{ad},(k+1)_{bc},k_{a})\,,
\end{equation}
except the pattern $(3_{ad},2_{bc},1_a)$ which is allowed, and we obtained the following theorem:
\begin{theo}\label{th1}
Let $u,v,w,t,n$ be non-negative integers. Denote by $A(u,v,w,t,n)$ the number of partitions of $n$ into $u$ distinct parts with color $a$, $v$ distinct parts with color $b$, $w$ distinct parts with color $c$ and $t$ distinct parts with color $d$, and denote by $B(u,v,w,t,n)$ the number of partitions of $n$ satisfying the conditions above, with $u$ parts with color $a,ab,ac$ or $ad$, $v$ parts with color $b,ab,bc$ or $bd$, $w$ parts with color $c,ac,bc$ or $cd$ and $t$ parts with color $d,ad,bd$ or $cd$.
We then have $A(u,v,w,t,n)=B(u,v,w,t,n)$, and the corresponding $q$-series identity is given by
\begin{equation}\label{series}
\sum_{u,v,w,t,n\in \N} B(u,v,w,t,n)a^ub^vc^wd^tq^n =(-aq;q)_\infty (-bq;q)_\infty(-cq;q)_\infty(-dq;q)_\infty\,\cdot
\end{equation}
\end{theo}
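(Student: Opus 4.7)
My plan is to prove Theorem \ref{th1} by constructing an explicit size-preserving and color-count-preserving bijection $\Phi$ between the partitions enumerated by $A(u,v,w,t,n)$ and those enumerated by $B(u,v,w,t,n)$. The source set consists of a quadruple of distinct-part partitions, one for each primary color $a,b,c,d$, and its weighted enumeration is exactly
\[(-aq;q)_\infty(-bq;q)_\infty(-cq;q)_\infty(-dq;q)_\infty\]
by applying Euler's identity in each color separately. Once $\Phi$ is established, the $q$-series identity \eqref{series} follows immediately from the evaluation of the source side.

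For the construction I would use a merge-and-rewrite scheme in the spirit of Alladi--Gordon. Given a quadruple of distinct-part primary-colored partitions, first interleave them into one non-increasing sequence, using the color order in \eqref{cons} to resolve ties between equal integers carrying different colors. Then apply local moves repeatedly until the sequence fits the table \eqref{tab1}: whenever two neighboring primary parts of distinct colors violate the minimal-gap requirement, absorb them into a single secondary-colored part that carries both of their primary color contributions; conversely, whenever a secondary part is followed (or preceded) by a configuration forcing the gap-conditions to fail, split it into two primary parts and rearrange locally. Each such move preserves both the total weight and the four primary-color multiplicities $u,v,w,t$, so the image of $\Phi$ automatically has the correct statistics; the entire difficulty is in showing that the procedure terminates at a unique canonical form characterized by the minimal-gap conditions together with the absence of the listed forbidden patterns.

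The main obstacle is identifying precisely that normal form. I would expect to show that the three forbidden triples $((k+2)_{cd},(k+2)_{ab},k_c)$, $((k+2)_{cd},(k+2)_{ab},k_d)$, and $((k+2)_{ad},(k+1)_{bc},k_a)$ are exactly the local configurations that admit a non-trivial alternative rewriting consistent with \eqref{tab1}, and that no other triple has this property. A finite case analysis on adjacent triples over the ten-color alphabet should isolate exactly these three; the allowed exception $(3_{ad},2_{bc},1_a)$ arises precisely because its alternative rewriting would require a part strictly below the minimum size $1$ and is therefore blocked. Verifying that $\Phi$ is a bijection then reduces to involutivity: the reverse procedure, which disassembles secondary parts back into primary ones, must always return four sequences of distinct parts, and the forward procedure must never create a forbidden triple. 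I would establish both by induction on the number of parts, keeping careful bookkeeping of how each primary-color unit flows through successive merge and split operations, with the underlined entries $\underline{1}$ and $\underline{0}$ of \eqref{tab1} providing the bookkeeping invariants that control the exception.
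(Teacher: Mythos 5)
Your overall strategy --- interleave the four distinct-part partitions into one sequence ordered by \eqref{cons} and then run a Bressoud-style merge algorithm whose image is characterized by \eqref{tab1} plus forbidden patterns --- is exactly the route taken here (the machines $\Phi$ and $\Psi$ of \Sct{sct3}), so the framework is right. The genuine gap sits at the step you yourself flag as ``the entire difficulty'': the exact characterization of the image $\Phi(\Od)$. You assert that a finite case analysis on \emph{adjacent triples} over the ten-color alphabet will isolate the three forbidden patterns. That presupposes that every obstruction to membership in the image is local, i.e.\ detectable on three consecutive parts. This is a theorem, not an a priori fact: whether a crossing performed by $\Psi$ on a secondary part is reversible by $\Phi$ depends on the \emph{bridge} of that part, which can lie arbitrarily far to its right, and for five or more primary colors the analogous analysis (\Thm{five}) produces infinitely many optimal forbidden patterns of unbounded length. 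For four colors one must actually prove that an optimal forbidden pattern has at most two secondary parts; in the paper this follows from the constraint \eqref{loop} that every later secondary part of an optimal forbidden pattern must be $1$-different-distant from the head, which the only possible heads $cd\rightarrow ab$ and $ad\rightarrow bc$ make impossible for a third secondary part. Your proposal contains no mechanism for excluding longer forbidden configurations, and without one the claimed normal form is not established.

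A second, smaller problem is that your forward procedure both merges primary pairs and splits secondary parts ``whenever a secondary part is followed (or preceded) by a configuration forcing the gap-conditions to fail.'' In the actual algorithm the forward direction only merges troublesome pairs and \emph{crosses} a secondary part leftward past a primary part (replacing $(\la_i,\la_{i+1})$ by $(\la_{i+1}+1,\la_i-1)$); splitting belongs exclusively to the inverse $\Psi$. A rewriting system containing both a move and its inverse needs a termination and confluence argument that the one-directional induction of \Prp{pr1} avoids entirely. Relatedly, ``involutivity'' cannot be verified configuration-by-configuration, since $\Psi$ is not injective on $\E$ (see the examples in \Sct{sct3}); one has to track, as in \Prp{pr10}, that the crossings of $\Phi$ are undone by $\Psi$ in reverse order, and separately prove that every partition of $\E$ avoiding the three patterns is actually attained. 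These are the pieces your plan names but does not supply.
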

The proof of \Thm{th1} consisted of a bijection established between the two sets of partitions. We also used a second bijection to show that \Thm{th1} is equivalent to \Thm{th2}. 
\bi 
By specializing the variables in \Thm{th1}, one can deduce
many partition identities.   For example, by considering the following transformation in \eqref{series}
\begin{equation}\label{dila}
\left\lbrace 
\begin{array}{l r c l }
\text{dilation :} &q &\mapsto&q^{12}\\
\text{translations :} &a,b,c,d &\mapsto&q^{-8},q^{-4},q^{-2},q^{-1}\\
\end{array}
\right. \,,
\end{equation}
we obtain a corollary of \Thm{th1}.
\begin{cor}
For any positive integer $n$, the number of partitions of $n$ into distinct parts congruent to $-2^3,-2^2,-2^1,-2^0\mod 12$ is equal to the number of partitions of $n$ into parts not congruent to $1,5\mod 12$ and different from $2,3,6,7,9$, such that the difference between two consecutive parts is greater than $12$ up to the following exceptions:
\begin{itemize}
\item $\la_i-\la_{i+1}= 9\Longrightarrow \la_i\equiv \pm 3 \mod 12$ and $\la_i-\la_{i+2}\geq 24$,
\item $\la_i-\la_{i+1}= 12\Longrightarrow\la_i\equiv -2^3,-2^2,-2^1,-2^0\mod 12$,
\end{itemize}
except that the pattern $(27,18,4)$ is allowed. 
\end{cor}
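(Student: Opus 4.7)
The plan is to apply the substitution \eqref{dila} to the $q$-series identity \eqref{series} of \Thm{th1} and translate both sides combinatorially. On the right, the product becomes
\begin{equation*}
(-q^4;q^{12})_\infty(-q^8;q^{12})_\infty(-q^{10};q^{12})_\infty(-q^{11};q^{12})_\infty,
\end{equation*}
which is the generating function for partitions of $n$ into distinct parts congruent to $-2^3,-2^2,-2^1,-2^0 \pmod{12}$. On the left, a $B$-partition part $k_c$ is sent to $\lambda'=12k-\delta(c)$, where $\delta$ equals $8,4,2,1$ on $a,b,c,d$ and $12,10,9,6,5,3$ on $ab,ac,ad,bc,bd,cd$, respectively. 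Under this correspondence primary parts (with $k\geq 1$) go to positive integers $\equiv 4,8,10,11 \pmod{12}$ and secondary parts (with $k\geq 2$) to positive integers $\equiv 0,2,3,6,7,9 \pmod{12}$; the residue classes $1$ and $5$ are never hit, and the values $2,3,6,7,9$ are excluded because they would correspond to secondary parts with $k=1$.

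For the difference conditions, writing $\lambda'_i-\lambda'_{i+1}=12(\lambda_i-\lambda_{i+1})-\delta(c_i)+\delta(c_{i+1})$, the bound $\lambda_i-\lambda_{i+1}\geq D_{c_i,c_{i+1}}$ from \eqref{tab1} produces a sharp minimum of $12\,D_{c_i,c_{i+1}}-\delta(c_i)+\delta(c_{i+1})$. Since $\delta$ is strictly decreasing along the column order of \eqref{tab1}, a direct inspection shows that this minimum is $\geq 13$ in all cases except: (i) the four diagonal $D=1$ entries $(a,a),(b,b),(c,c),(d,d)$, giving minimum $12$ with $\lambda'_i\equiv -2^3,-2^2,-2^1,-2^0\pmod{12}$ respectively; (ii) the underlined $D_{ad,bc}=1$, giving minimum $9$ with $\lambda'_i\equiv 3\pmod{12}$; and (iii) the underlined $D_{cd,ab}=0$, giving minimum $9$ with $\lambda'_i\equiv -3\pmod{12}$. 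One also verifies by the same arithmetic that no $(c_i,c_{i+1})$ admits an image-difference equal to $10$ or $11$, so the two listed exceptions for differences $12$ and $9$ are exhaustive.

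It remains to translate the forbidden-pattern condition into $\lambda'_i-\lambda'_{i+2}\geq 24$ whenever $\lambda'_i-\lambda'_{i+1}=9$. In this case $c_{i+1}\in\{bc,ab\}$, and an enumeration of the admissible colors of $\lambda_{i+2}$ with their minimal gaps shows that $\lambda'_i-\lambda'_{i+2}<24$ occurs precisely for the three triples
\begin{equation*}
((k+2)_{ad},(k+1)_{bc},k_a),\quad ((k+2)_{cd},(k+2)_{ab},k_c),\quad ((k+2)_{cd},(k+2)_{ab},k_d),
\end{equation*}
whose images are $(12k+15,12k+6,12k-8)$, $(12k+21,12k+12,12k-2)$ and $(12k+21,12k+12,12k-1)$, each with $\lambda'_i-\lambda'_{i+2}\leq 23$. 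These are exactly the forbidden patterns of \Thm{th1}, so their exclusion is equivalent to the required condition. Finally, the single allowed exception $(3_{ad},2_{bc},1_a)$ of \Thm{th1} maps under \eqref{dila} to the triple $(27,18,4)$, matching the exception in the corollary. The only real difficulty in this argument is the book-keeping: one must run through the entries of \eqref{tab1} systematically to confirm that the two residue-exception clauses and the ``except $(27,18,4)$'' clause together capture every case in which the image-difference drops below $13$.
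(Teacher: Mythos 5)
Your proposal is correct and takes the same approach as the paper: the paper simply records the dilation/translation \eqref{dila} applied to the $q$-series identity \eqref{series} and then states the corollary, leaving the translation of the color and difference conditions as an implicit routine check. You have supplied exactly that check — the residue bookkeeping via $\delta$, the verification that $12\,D_{c_i,c_{i+1}}-\delta(c_i)+\delta(c_{i+1})\geq 13$ outside the six exceptional entries, the observation that differences within a fixed color pair are congruent mod $12$ (hence never $10$ or $11$), and the identification of the three forbidden triples and the lone exception $(27,18,4)$ — and the computations are accurate.
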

\begin{ex}
For example, with $n=49$, the partitions of the first kind are 
\[(35,10,4),(34,11,4),(28,11,10),(23,22,4),\]
\[(23,16,10),(22,16,11)\,\,\text{and}\,\, (16,11,10,8,4)\]
and the partitions of the second kind are
\[(35,14),(34,15),(33,16),(45,4),(39,10),(38,11)\,\,\text{and}\,\,(27,18,4)\,\cdot\]
\end{ex}
\bi
The main goal of this paper is to give a general result beyond G\"ollnitz' theorem, by proving an analogue of \Thm{th1} for an arbitrary finite set of primary colors.
\subsection{Statement of Results}
Let $\C = \{a_1,\ldots,a_n\}$ be an ordered set of primary colors, with 
$ a_1<\cdots<a_n$
 and let $\Cc = \{a_ia_j:  1\leq i<j\leq n\}$ be the set of secondary colors.
 \bi
 We can naturally extend the order from $\C$ to $\C\sqcup \Cc$ with
\begin{align}\label{orD}
a_1a_2<\cdots<a_1a_n<a_1<a_2a_3<\cdots<a_2a_n<a_2<\cdots<a_{i-1}\\
<a_ia_{i+1}<\cdots<a_ia_n<a_i<\cdots<a_{n-1}a_n<a_{n-1}<a_n\,\cdot\nonumber
\end{align}
We also set 
\begin{equation}\label{special}
\Ccc = \{(a_ka_l,a_ia_j)\in \Cc^2: \, i<j<k<l\text{ or } k<i<j<l\} 
\end{equation}
 to be the set of the \textit{special pairs} of secondary colors. Note that the pairs of $\Ccc$ use four different primary colors.
Let us now define the lexicographic order $\succ$ on the set of colored parts by the following relation:
\begin{equation}\label{lex}
k_p\succ l_{q} \Longleftrightarrow k-l\geq \chi(p\leq q)\,\cdot
\end{equation}
Explicitly, this gives the order 
\begin{equation}\label{cons1}
1_{a_1a_2}\prec\cdots\prec 1_{a_n}\prec 2_{a_1a_2} \prec\cdots\prec 2_{a_n}\prec 3_{a_1a_2}\prec \cdots \,\cdot
\end{equation}
\begin{deff}
Let $\Pp$ be the set of the parts with primary color, and let $\Sc$ be the set of the parts with secondary color and size greater than one.
We then  define two relations  $\triangleright$ and $\gg$  on $\Pp\sqcup \Sc$ as follows : 
\begin{equation}\label{Ord}
k_p\triangleright l_{q}\Longleftrightarrow\left\lbrace
\begin{array}{ll}
k_p\succeq (l+1)_{q}&\text{if}\quad p\,\,\text{or}\,\,q\in \C\\
k_p\succ (l+1)_{q}&\text{if}\quad p\,\,\text{and}\,\,q\in \Cc
\end{array}\right.\,,
\end{equation}
and 
\begin{equation}\label{Ordd}
k_p\gg l_{q}\Longleftrightarrow\left\lbrace
\begin{array}{ll}
k_p\succeq (l+1)_{q}&\text{if}\quad p\,\,\text{or}\,\,q \in \C\\
k_p\succ (l+1)_{q}&\text{if}\quad (p,q) \in \Cc^2\setminus \Ccc\\
k_p\succ l_{q}&\text{if}\quad (p,q)\in\Ccc
\end{array}\right.\,\cdot
\end{equation}
\end{deff}
Note that $k_p\triangleright l_{q}$ implies $k_p\gg l_{q}$.
We can easily check that in the case $n=4$ and $\C =\{a<b<c<d\}$, the relations $\triangleright$ and $\gg$ establish some minimal differences $k-l$ that correspond respectively to the minimal differences 
$\la_i-\la_{i+1}$ in 
\eqref{tab2} and \eqref{tab1}. We also remark that these differences constitute an exhaustive list of all the minimal differences for our relations, since at most four primary colors occur in any pair of colors in $\C\sqcup\Cc$.
\begin{deff}\label{defff}
A secondary color is just a product of two primary colors. For any type of partition $\la$, its size $|\la|$ is the sum of its part sizes.
\begin{enumerate}
\item We denote by $\Od$ the set of partitions with parts in $\Pp$ and well-ordered by $\succ$. We then have that 
$\la \in \Od$ if and only if there exist $\la_1\succ\cdots\succ \la_t \in \Pp$ such that $\la=(\la_1,\ldots,\la_t)$. We set 
$c(\la_i)$ to be the color of $\la_i$ in $\C$, and $C(\la) = c(\la_1)\cdots c(\la_t)$ as a commutative product of colors in $<\C>$.
\item We denote by $\E$ the set of partitions with parts in $\Pp\sqcup \Sc$ and well-ordered by $\gg$. We then have that 
$\nu \in \E$ if and only if there exist $\nu_1\gg\cdots\gg\nu_t \in \Pp\sqcup \Sc$ such that $\nu=(\nu_1,\ldots,\nu_t)$. We set colors $c(\nu_i)\in \C\sqcup\Cc$ depending on whether $\nu_i$ is in  $\Pp$ or $\Sc$, and we also define 
$C(\nu)=c(\nu_1)\cdots c(\nu_t)$ \textbf{seen} as a commutative product of colors in $\C$.
\item We finally  denote by $\Eee$ the subset of partitions of $\E$ with parts well-ordered by $\triangleright$.
\end{enumerate} 
\end{deff}
We can now state the first result of our paper.
\begin{theo}\label{th3}
Let $m$ be a non-negative integer and $C$ a commutative product of primary colors in $\C$. Denote by $U(C,m)$ the number of partitions $\la$ in $\Od$  with $(C(\la),|\la|) =(C,m)$, and 
denote by $V(C,m)$  the number of partitions $\nu$ in $\E$ with $(C(\nu),|\nu|) =(C,m)$. We then have the following inequality :
\begin{equation}
U(C,m)\leq V(C,m)\,\cdot
\end{equation}
\end{theo}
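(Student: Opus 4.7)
The plan is to construct an explicit injection $\Phi:\Od\to\E$ that preserves both the color product $C(\cdot)$ and the size $|\cdot|$, which will immediately yield $U(C,m)\leq V(C,m)$. I would build $\Phi$ by a part-by-part insertion process, proceeding by induction on the number of parts of $\la$, and verify injectivity by exhibiting an inverse extraction.

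Concretely, I would first define a one-part insertion map $\iota:\E\times\Pp\to\E$ which, given $\nu=(\nu_1\gg\cdots\gg\nu_t)\in\E$ and a primary-colored part $k_c\in\Pp$, returns a partition $\iota(\nu,k_c)\in\E$ with color product $C(\nu)\cdot c$, size $|\nu|+k$, and exactly one more part than $\nu$. The idea is to locate the unique index $i$ for which $\nu_i\succeq k_c\succ\nu_{i+1}$ in the lexicographic order $\succ$, and then check whether the stricter difference requirements of $\gg$ are respected on both sides. If yes, insert $k_c$ directly. If not, execute a local rewrite: either merge $k_c$ with a neighboring primary part of a different primary color to form a secondary-colored part (possibly shifting adjacent parts by $\pm 1$ to restore the gaps), or split an adjacent secondary part into a primary part together with a modified secondary part absorbing $k_c$. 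Iterating $\iota$ on the parts of $\la$, for instance from smallest to largest starting from the empty partition, yields $\Phi(\la)\in\E$.

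For injectivity, I would construct an extraction map $\epsilon:\E\setminus\{\emptyset\}\to\E\times\Pp$ by distinguishing a canonical ``last-inserted'' primary part inside each nonempty $\nu\in\E$ (most likely the smallest primary part appearing in $\nu$, read through the lens of the same local color configurations used by $\iota$), and then prove that $\iota$ and $\epsilon$ are mutually inverse on matching domains. This reduces to a case analysis on the triple of colors $(c(\nu_i),c,c(\nu_{i+1}))$ and the sizes involved.

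The main obstacle is the presence of the special pairs $\Ccc$: because $\gg$ allows $k_p$ and $k_q$ of equal size when $(p,q)\in\Ccc$, whereas $\succ$ does not, the insertion rules near such a configuration must be carefully calibrated so that the local rewrite preserves $\gg$-ordering everywhere, terminates, and remains reversible by $\epsilon$. The cascading effect of $\pm 1$ shifts can propagate through several neighboring parts and potentially create or destroy special pairs, so the delicate combinatorial heart of the proof is verifying that a single local move always suffices (or that any cascade terminates in a controlled way) and that the canonical marker used by $\epsilon$ always coincides with the position modified by the final step of $\iota$. Given the explicit tables underlying $\triangleright$ and $\gg$, a finite, if intricate, case analysis should suffice to establish well-definedness, invariance of $(C,|\cdot|)$, and the $\iota$--$\epsilon$ inversion, thereby closing the proof.
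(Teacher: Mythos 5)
Your overall strategy---build an explicit $(C,|\cdot|)$-preserving injection $\Od\to\E$ and certify injectivity by an inverse---is exactly the paper's strategy, but your proposed construction differs from the paper's: the paper does not insert parts one at a time, it runs a global Bressoud-style rewriting machine $\Phi$ on all of $\la$ at once (repeatedly merging the leftmost ``troublesome'' pair of consecutive primary parts into a secondary part, then letting that secondary part cross leftward past primary parts until the $\gg$-order is restored), with the inverse machine $\Psi$ splitting secondary parts from right to left. As written, your proposal has a genuine gap: the insertion map $\iota$ is never actually defined, and the key hope on which it rests---that ``a single local move always suffices'' or that the cascade of $\pm1$ shifts is controlled---is precisely what fails in this problem. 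In the paper's algorithm a single merge produces a secondary part that can cross \emph{arbitrarily many} primary parts before coming to rest (see the worked example transforming $(12_a,7_b,6_d,6_c,5_a,\dots)$, where $13_{bd}$ and $9_{ad}$ each travel past earlier parts), and controlling where such a part stops, and why the resulting configuration is still in $\E$ and still invertible, is the entire content of Lemmas \ref{lem1}--\ref{lem3} and Propositions \ref{pr1}--\ref{pr10}. Deferring this to ``a finite, if intricate, case analysis'' skips the heart of the proof; in particular you give no argument for termination of the cascade, no proof that the rewritten sequence is $\gg$-ordered across the boundary of the modified region (this is where the special pairs in $\Ccc$ bite, via \eqref{sw1} and \eqref{gam}), and no actual definition of the marker that $\epsilon$ should extract.

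Two smaller but concrete inconsistencies: if $\iota$ can merge the inserted primary part with a neighbor into a secondary part, then $\iota(\nu,k_c)$ does not have ``exactly one more part than $\nu$''; and if you insert from smallest to largest, the last-inserted part is the \emph{largest}, so designating ``the smallest primary part appearing in $\nu$'' as the canonical last-inserted part cannot be the right extraction rule. These are symptoms of the fact that the local data of $\nu\in\E$ do not record the insertion history; the paper circumvents this by making the inverse $\Psi$ a deterministic global algorithm (split the rightmost secondary part, after letting it cross rightward) and then proving $\Psi\circ\Phi=\mathrm{Id}$ by matching the intermediate triplets $(\delta^u,\gamma^u,\mu^u)$ of the two machines. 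If you want to salvage an insertion-style argument, you would essentially have to rebuild that machinery one part at a time, which is not easier.
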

The previous theorem implies that $\Od$ can be associated to a set $\Ee$ such that $\Ee\subset \E$. We define this set $\Ee$ using two technical tools : the \textbf{different-distance} and the \textbf{bridge}. 
The definition of the different-distance is stated here, while the definition of the bridge, more intricate, will be given in \Sct{sct5}.
\begin{deff}
Let $\lambda=(\lambda_1,\cdots,\lambda_s)$ be a \textit{sequence}, where the elements $\lambda_i$ belong to a set of colored numbers ordered by a relation $\succeq$, and let $d$ be a positive number. For any $i,j \in \sss$, we say that $\lambda_i$ \textit{is $d$-different-distant from} $\lambda_j$ if we have the following relation:
\begin{equation}\label{eq:difdist}
\lambda_i\succeq \lambda_j + d(j-i)\,\cdot
\end{equation}
Note that the relation "being $d$-different-distant from" is transitive, as $\lambda_i$ is $d$-different-distant from 
$\lambda_j$ and $\lambda_j$ is $d$-different-distant from $\lambda_k$ implies that $\lambda_i$ is $d$-different-distant from 
$\lambda_k$.
\end{deff}
A good example of a partition having such a property is a partition $\nu=(\nu_1,\cdots,\nu_s)\in \Eee$. In fact, by \eqref{Ord}, we recursively obtain for any  $i\leq j \in \sss$ that $\nu_i$ is $1$-different-distant from $\nu_j$. This is not true in general when $\nu\in \E$, as by \eqref{Ord} and \eqref{Ordd}, a part $\nu_i$ not well-ordered with $\nu_{i+1}$ in terms of $\triangleright$  is also not $1$-different-distant from $\nu_{i+1}$.
\bi
The main theorem of this paper and generalization of \Thm{th1} can be stated as follows. 
\begin{theo}\label{th4}
Let $\Ee$ be the set of partitions $\nu = (\nu_1,\ldots,\nu_s)\in \E$ such that, for all $i\in \ssss$ with 
\begin{equation}\label{real}
\nu_{i-1}\triangleright \nu_i  \not\triangleright\,\nu_{i+1}\,,
\end{equation}
the part $\nu_i$ is $1$-different-distant from its \textbf{bridge}. Then, for any non-negative integer $m$ and any commutative product of primary colors $C$ in $\C$, by setting $U(C,m)$ as before in \Thm{th3}, and 
by setting $W(C,m)$ to be the number of partitions $\nu$ in  $\Ee$  with $(C(\nu),|\nu|) =(C,m)$, we then have that $U(C,m)=W(C,m)$ and the identity
\begin{equation}\label{eq:final}
 \sum_{m,u_1,\ldots,u_n\geq 0} W(\prod_{i=1}^n a_i^{u_i},m)\prod_{i=1}^n a_i^{u_i}q^m = \sum_{m,u_1,\ldots,u_n\geq 0} U(\prod_{i=1}^n a_i^{u_i},m)\prod_{i=1}^n a_i^{u_i} q^m = (-a_1q;q)_\infty\cdots (-a_nq;q)_\infty \,\cdot
\end{equation}
\end{theo}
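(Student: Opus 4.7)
\medskip

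\noindent\textbf{Proof proposal.} The generating function identity on the $U$-side is immediate: a partition in $\Od$ is simply the data of, for each primary color $a_i$, a set of distinct positive parts colored $a_i$, so
\[
\sum_{m,u_1,\ldots,u_n\geq 0} U\!\left(\prod_{i=1}^n a_i^{u_i},m\right)\prod_{i=1}^n a_i^{u_i} q^m \;=\; \prod_{i=1}^n(-a_i q;q)_\infty .
\]
Hence the entire content of \Thm{th4} is the equality $U(C,m)=W(C,m)$, which I would establish bijectively by constructing a color- and size-preserving map $\Phi:\Od\to\Ee$ together with its inverse.

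The plan is to build $\Phi$ iteratively. Writing $\la=(\la_1\succ\cdots\succ\la_s)\in\Od$, I would process the parts one at a time, maintaining at each stage a partition $\nu^{(k)}\in\Ee$ together with the oriented rooted forest of merges announced in the abstract. At each step I insert the next primary part into $\nu^{(k)}$; if inserting naively would violate the $\gg$ well-ordering, I instead merge this primary part with an existing part to form a secondary part of strictly larger color-lexicographic rank, shifting the surviving neighbours as forced by the minimal differences read off from \eqref{Ord} and \eqref{Ordd}. The forest records, for each secondary part of $\nu^{(k)}$, the chain of primary parts that collapsed to produce it; this is what will make the procedure reversible. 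The case $n=4$ of this insertion is already carried out in \cite{IK}, and the general definitions of $\triangleright$, $\gg$, $\Ccc$ in the present paper were deliberately calibrated so that the same local moves extend: the minimal differences read off \eqref{Ord} and \eqref{Ordd} involve at most four primary colors at a time, matching the case already treated.

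The second task is to verify that $\Phi(\la)\in\Ee$, i.e.\ that whenever the insertion produces a local configuration $\nu_{i-1}\triangleright\nu_i\not\triangleright\nu_{i+1}$, the part $\nu_i$ is $1$-different-distant from its bridge (to be defined in \Sct{sct5}). The key point is that such a configuration can only arise at a step where the insertion creates or touches a $\Ccc$-pair $(c(\nu_i),c(\nu_{i+1}))$; the bridge will be the part of $\nu^{(k)}$ that, in the forest, is the ancestor of $\nu_{i+1}$ responsible for the special pair. By induction on $k$, tracking the transitivity property of being $1$-different-distant highlighted right after the definition, the bridge remains $1$-different-distant from $\nu_i$ throughout subsequent insertions. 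I would then construct the inverse map $\Psi:\Ee\to\Od$ by scanning $\nu$ from one end, using the bridge to locate the last merge performed by $\Phi$, undoing it, and iterating. A double-induction shows $\Psi\circ\Phi=\mathrm{id}_\Od$ and $\Phi\circ\Psi=\mathrm{id}_\Ee$.

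Finally, the inequality $W(C,m)\leq V(C,m)$ is tautological since $\Ee\subset\E$, and $U(C,m)\leq V(C,m)$ is \Thm{th3}; combined with the bijection $\Phi$ they give $U(C,m)=W(C,m)$, and the generating function identity \eqref{eq:final} follows from the computation above. The main obstacle is the second task: pinning down the bridge so that the $1$-different-distance invariant is preserved under every insertion, especially when an insertion shifts a part that is itself a bridge for some later configuration, or when nested $\Ccc$-pairs appear. This is exactly why the definition of the bridge is postponed to \Sct{sct5}: it must be designed so that the forest structure and the $1$-different-distance condition are simultaneously stable under the local moves of $\Phi$.
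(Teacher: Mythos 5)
Your overall architecture matches the paper's: the product formula for $\Od$ is immediate, and everything rests on a size- and color-preserving bijection between $\Od$ and $\Ee$ realized by merging troublesome pairs of primary parts into secondary parts ($\Phi$) and splitting them back ($\Psi$). One mechanical difference: the paper's $\Phi$ is a Bressoud-type machine acting on the whole partition --- merge the first troublesome pair, then push the resulting secondary part leftward by repeated crossings --- rather than an insertion of the parts one at a time into a growing $\nu^{(k)}$; either could in principle be made to work, but your version is not developed far enough to check, and the paper's forest of \Sct{sct7} is a by-product of $\Psi$ used for studying forbidden patterns, not a load-bearing ingredient of the bijection.

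The genuine gap is that the proposal never proves the one thing that distinguishes \Thm{th4} from \Thm{th3}: that the image $\Phi(\Od)$ coincides with the set $\Ee$ cut out by the bridge condition. You defer the definition of the bridge to \Sct{sct5} and close by saying it ``must be designed so that'' the invariant is stable --- that is a restatement of the difficulty, not a resolution of it. In the paper this equivalence is the entire content of \Sct{sct5}: the bridge $\Br(i)$ is defined explicitly and shown (\Prp{pr5}) to be the first primary part, or upper half, that crosses $\nu_i+\nu_{i+1}$ during $\Psi$; its monotonicity and fixed-point structure are \Lem{lem4}; the inclusion $\Phi(\Od)\subseteq\Ee$ follows from \Prp{pr1}, \Prp{pr2} and \Prp{pr10}; and the converse inclusion --- that the bridge condition forces $\Psi(\nu)\in\Od$ and $\Phi(\Psi(\nu))=\nu$ --- is \Prp{pr6}, which is where the real work lies and which your sketch compresses into one asserted sentence. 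Your description of the bridge as ``the forest-ancestor of $\nu_{i+1}$ responsible for the special pair'' also points the wrong way: the bridge sits at an index $\Br(i)\geq i$, i.e.\ to the \emph{right} of the troublesome secondary part, and the required inequality is that the secondary part dominates it, $\nu_i+\nu_{i+1}\succ\nu_{\Br(i)}+\frac{\Br(i)-i}{2}$, not that the bridge stays $1$-different-distant from $\nu_i$. Without the explicit definition and a proof of both inclusions, $U(C,m)=W(C,m)$ does not follow.
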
 
This may be compared with another result of the author.  
In his generalization of Siladi\'c's partition theorem \cite{IK19}, he used the same set of $n$ primary colors, along with the total set of the $n^2$ non-commutative secondary colors $a_ia_j$ for $i,j\in\{1,\ldots,n\}$, and gave an identity with the same product as \eqref{eq:final}. Another identity, discovered by Corteel and Lovejoy \cite{CL06}, relates the same set of partitions, with primary colored parts, to a set of partitions with parts having some colors as products of at most $n$ different primary colors, giving $2^n-1$ colors in total.
\bi Note that by definition, 
a partition in $\Eee$ never satisties \eqref{real}, so that the definition of $\Ee$ still holds for this partition. We thus have $\Eee\subset \Ee \subset \E$.
We also remark that $\Ccc$ is empty for $\C$ with fewer than four primary colors, so that in that case, $\Eee=\E$. Therefore, \Thm{th4} implies the Alladi-Andrews-Gordon refinement of G\"ollnitz' identity.
For $n\geq 4$, the set $\Ee$ can be seen as a subset of $\E$ that avoids some patterns. When $n=4$, we show that the forbidden patterns are the ones described in \Thm{th1}. For $n>4$, the enumeration of forbidden patterns becomes more intricate.   
\bi
The paper is organized as follows. In \Sct{sct2}, we will present some tools that will be useful for the proof of \Thm{th3} and \Thm{th4}.
After that, in \Sct{sct3}, we will give two mappings $\Phi$ and $\Psi$ for \Thm{th3} that preserve the size and the color product of partitions. Then, in \Sct{sct4}, we will prove \Thm{th3} by showing that 
$\Phi(\Od)\subset \E$ and $\Psi\circ\Phi_{|\Od} = Id_{|\Od}$.
In \Sct{sct5}, we will set $\Ee=\Phi(\Od)$, describe the notion of bridge, and prove \Thm{th4}.
In \Sct{sct6}, we explain how to generate the forbidden patterns of \Thm{th4}, and we especially retrieve in the case of four primary colors the three forbidden patterns as enumerated in \Thm{th1}, and we prove  that, for more than four primary colors, there is an infinite set of forbidden patterns.
Finally, in \Sct{sct7}, we relate the mapping $\Psi$ to \textit{Motzkin paths} and \textit{oriented rooted forests}, giving new perspectives for the study of the forbidden patterns.
\\\\We postpone the proofs of the technical lemmas and propositions respectively to Sections $8$ and $9$.
\section{Preliminaries}\label{sct2}
\subsection{The setup}
Let us first analyze the secondary parts in $\Sc$. For any $1\leq i<j\leq n$, and any positive integer $k$,  we have
\begin{align}
\label{half1}(2k)_{a_ia_j} &= k_{a_j}+k_{a_i}\\
(2k+1)_{a_ia_j} &= (k+1)_{a_i}+k_{a_j}\,\cdot \nonumber
\end{align}
In fact,  any secondary part in $\Sc$ with color $a_ia_j$ can be uniquely written as the sum of two consecutive parts in $\Pp$ with colors $a_i$ and $a_j$ in terms of $\succ$. 
\begin{deff}
For any $1\leq i<j\leq n$, we define the functions $\alpha$ and $\beta$ on $\Sc$ by 
\begin{equation}\label{ab}
\alpha: \left\lbrace \begin{array}{l c l}
2k_{a_ia_j}&\mapsto& k_{a_j}\\
(2k+1)_{a_ia_j}&\mapsto&(k+1)_{a_i}
\end{array}\right.\qquad \text{and}\qquad\beta: \left\lbrace \begin{array}{l c l}
2k_{a_ia_j}&\mapsto& k_{a_i}\\
(2k+1)_{a_ia_j}&\mapsto&k_{a_j}
\end{array}\right.\,,
\end{equation}
respectively named \textit{upper} and \textit{lower halves}.
\end{deff}
 One can check that for any $k_{a_ia_j}\in \Sc$, 
\begin{equation}\label{abba}
\alpha((k+1)_{a_ia_j}) = \beta(k_{a_ia_j})+1\quad\text{and}\quad \beta((k+1)_{a_ia_j})=\alpha(k_{a_ia_j})\,\cdot
\end{equation}
In the previous sum, adding an integer to a part only changes its size but does not change its color.
We can then deduce by induction that for any $m\geq 0$,
\begin{equation}\label{aj}
\alpha((k+m)_{a_ia_j})\preceq \alpha(k_{a_ia_j})+m \quad\text{and}\quad \beta((k+m)_{a_ia_j})\preceq \beta(k_{a_ia_j})+m\,\cdot
\end{equation}
\begin{rem}\label{moitmoit}
In fact, we have 
\begin{equation}
\alpha((k+2m)_{a_ia_j})=\alpha(k_{a_ia_j})+m \quad\text{and}\quad \beta((k+2m)_{a_ia_j})= \beta(k_{a_ia_j})+m\,\cdot
\end{equation}
\end{rem}
\bi
\begin{rem}
 Let us consider a partition $\la$ in $\Od$. By definition \eqref{defff}, 
it does not belong to $\E$ if and only if it has two consecutive parts $\la_i,\la_{i+1}$ such that $\la_i\not\gg \la_{i+1}$. We then have by \eqref{Ordd} that
\begin{equation}\label{nog}
\la_i\succ\la_{i+1}\quad\text{and}\quad \la_i\not\gg \,\la_{i+1}\quad\Longleftrightarrow \quad\la_{i+1}+1\succ \la_i\succ\la_{i+1}\,\cdot
\end{equation}
An equivalent reformulation consists in saying that $\la_i$ and $\la_{i+1}$ are two primary parts with distinct colors, consecutive in terms of $\succ$. Then, by \eqref{ab},
$\la_i+\la_{i+1}$ can be seen as the unique secondary part with respectively $\la_i$ and $\la_{i+1}$ as its upper and  lower halves.
\end{rem}
\bi
\subsection{Technical lemmas}
We will state some important lemmas for the proof of \Thm{th3} and \Thm{th4}. The proofs can be found in Section $8$.
\bi
\begin{lem}[\textbf{Ordering primary and secondary parts}]\label{lem1}
For any $(l_p,k_{q})\in\Pp\times \Sc$, we have the following equivalences:
\begin{align}
&\quad\l_p\not \gg k_{q}\Longleftrightarrow (k+1)_{q}\gg (l-1)_p\label{oe}\,,\\
&\quad l_p \gg \alpha(k_{q})\Longleftrightarrow \beta((k+1)_{q})\not \succ (l-1)_p\label{eo}\,\cdot
\end{align}
\bi
\end{lem}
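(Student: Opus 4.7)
The plan is to derive both equivalences by unfolding the definition \eqref{Ordd} of $\gg$ and the definition \eqref{lex} of the lexicographic order $\succ$, together with the general fact that, since $\succ$ is a strict total order on colored numbers, one has $a\succeq b$ if and only if $b\not\succ a$.

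For \eqref{oe}, both sides of $\gg$ involve one primary and one secondary part, so only the first case of \eqref{Ordd} applies: $\gg$ reduces to $\succeq$ after incrementing the smaller part by one. Moreover, since a primary color and a secondary color are always distinct, the only case of \eqref{lex} where $\succ$ and $\succeq$ disagree---namely two parts with equal colors and equal sizes---does not occur here, so the two symbols coincide on such pairs. Consequently, $l_p\not\gg k_q$ unfolds to $(k+1)_q\succ l_p$, while $(k+1)_q\gg (l-1)_p$ unfolds to $(k+1)_q\succeq l_p$, and these two statements express the same strict inequality.

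For \eqref{eo}, the starting observation is the identity $\beta((k+1)_q)=\alpha(k_q)$ from \eqref{abba}, which lets me denote by $m_r$ this common primary colored number. Both sides of \eqref{eo} now compare two primary parts, so \eqref{Ordd} again reduces $\gg$ to $\succeq$, and it suffices to establish the purely numerical equivalence
\[
l_p\succeq (m+1)_r\ \Longleftrightarrow\ (l-1)_p\succeq m_r.
\]
Expanding \eqref{lex} on both sides reduces each inequality to $l-m\geq 1+\chi(p<r)$, so the equivalence holds. Combining with $b\not\succ a\Longleftrightarrow a\succeq b$ then rewrites $(l-1)_p\succeq m_r$ as $m_r\not\succ (l-1)_p$, yielding \eqref{eo}.

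Neither step presents a real obstacle; the only source of care is the asymmetry between $\succ$ and $\succeq$ in \eqref{lex}, which must be tracked depending on whether the two colors being compared coincide. The parity-case definitions of $\alpha$ and $\beta$ never need to be opened once \eqref{abba} is invoked, so the argument reduces in each case to a single line of rewriting.
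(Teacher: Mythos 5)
Your proof is correct and follows essentially the same route as the paper: for \eqref{oe}, unfold $\gg$ via \eqref{Ordd}, use the totality of $\succ$ to convert a negated ordering into a positive one, and observe that $\succ$ and $\succeq$ coincide across distinct colors; for \eqref{eo}, invoke $\alpha(k_q)=\beta((k+1)_q)$ from \eqref{abba} and then shift both parts by one. The only difference is that you spell out the shift equivalence $l_p\succeq (m+1)_r\Leftrightarrow (l-1)_p\succeq m_r$, which the paper applies silently; that is a harmless (and arguably welcome) extra step.
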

\begin{lem}[\textbf{Ordering secondary parts}]\label{lem2}
Let us consider the table $\Delta$ in \eqref{tab1}. Then, for any secondary colors $p,q\in \Cc$, 
\begin{equation}\label{gam}
\Delta(p,q) = \min\{k-l:\beta(k_{p})\succ \alpha(l_{q})\}\,\cdot
\end{equation} 
Moreover, if the secondary parts $k_p,l_q$ are such that $\beta(k_p)\succ\beta(\l_q)$, then 
\begin{equation}\label{sw1}
 (k+1)_p\gg l_q\,\cdot
\end{equation}
Furthermore, if $k-l \geq \Delta(p,q)$, we then have either $\beta(k_{p})\succ \alpha(l_{q})$ or 
\begin{equation}\label{sw}
\alpha(l_{q})+1\gg \alpha((k-1)_{p})\succ \beta((k-1)_{p})\succ \beta(l_q)\,\cdot
\end{equation}
In the case of equality $k-l = \Delta(p,q)$, we necessarily have 
\begin{equation}\label{chaine}
 \beta(l_q)+1\succeq \beta(k_p)\,,
\end{equation}
and in the other case, we necessarily have that $\beta(k_{p})\succ \alpha(l_{q})$.
\end{lem}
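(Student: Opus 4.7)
The plan is to prove all claims of the lemma by direct case analysis, based on the explicit formulas for $\alpha$ and $\beta$ in \eqref{ab}. I will fix $p = a_ia_j$ with $i < j$ and $q = a_{i'}a_{j'}$ with $i' < j'$, and write $\beta(k_p)$ and $\alpha(l_q)$ as primary parts determined by the parities of the sizes $k$ and $l$. Since the colors involved are among $a_i, a_j, a_{i'}, a_{j'}$, the analysis splits into three main cases: $p = q$, $|\{i,j\} \cap \{i',j'\}| = 1$, and disjoint color sets; the last further subdivides by the interleaving of the indices, yielding in particular the special pairs in $\Ccc$.

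For the first assertion $\Delta(p,q) = \min\{k - l : \beta(k_p) \succ \alpha(l_q)\}$, in each case I compute the minimum $k - l$ by running through the four parity combinations of $k$ and $l$; this gives an explicit value in $\{0, 1, 2\}$ that matches the entry of the table \eqref{tab1} (and its natural extension to $n$ primary colors). For the implication $\beta(k_p) \succ \beta(l_q) \Rightarrow (k+1)_p \gg l_q$, I invoke \eqref{abba} to rewrite $\beta((k+1)_p) = \alpha(k_p)$; since $\alpha(k_p)$ is the part immediately after $\beta(k_p)$ in $\succ$, the hypothesis $\beta(k_p) \succ \beta(l_q)$ forces $\beta((k+1)_p) \succ \alpha(l_q)$ by a short parity argument, and then the first assertion yields $(k+1)_p \gg l_q$.

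For the dichotomy when $k - l \geq \Delta(p, q)$, the strategy is first to note that $\beta(k_p)$ and $\alpha(l_q)$ depend monotonically on $k$ and $-l$ respectively, via \eqref{aj}, so if $\beta(k_p) \not\succ \alpha(l_q)$ then necessarily $k - l = \Delta(p, q)$, placing us at the boundary of the minimum in the first assertion. The chain $\alpha(l_q) + 1 \gg \alpha((k-1)_p) \succ \beta((k-1)_p) \succ \beta(l_q)$ and the equality-case estimate $\beta(l_q) + 1 \succeq \beta(k_p)$ then both follow by an explicit check at this boundary, using the concrete forms of the halves. The main obstacle is the sheer volume of case analysis, especially for the dichotomy, where one must carefully track the distinction between generic secondary pairs and special pairs in $\Ccc$ (whose presence in \eqref{Ordd} corresponds precisely to the cases where the minimum difference $\Delta(p, q)$ drops to $0$ or $1$); each individual subcase, however, reduces to a short primary-color comparison, so the proof is routine once the framework is in place.
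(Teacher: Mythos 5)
Your proposal is correct and follows essentially the same route as the paper: both arguments reduce everything to the explicit parity-dependent formulas \eqref{ab} for the halves, use \eqref{abba} to shift indices for \eqref{sw1} and the boundary case, and treat the dichotomy by observing that the minimum in \eqref{gam} is attained at one of the two parities so that failure of $\beta(k_p)\succ\alpha(l_q)$ pins $k-l$ to the extremal value. The only organizational difference is that the paper compresses your case enumeration over color overlaps into the single closed formula $\Delta(p,q)=\chi(r\leq y)+\chi(r\leq x)\chi(s\leq y)$ (for $p=a_ra_s$, $q=a_xa_y$) and then checks only three parity configurations, which is the same computation packaged more economically.
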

\bi
\begin{lem}[\textbf{$1$-different-distance on $\Eee$}]\label{lem3}
Let us consider a partition $\nu =(\nu_1,\ldots,\nu_t)\in \Eee$. Then, for any $1\leq i<j\leq t$, we have
\begin{equation}\label{ee}
\nu_{i}\triangleright \nu_{j}+j-i-1\,\cdot
\end{equation}
\end{lem}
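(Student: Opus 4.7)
The plan is to proceed by induction on the gap $d = j - i$. The base case $d = 1$ reduces immediately to the defining condition of $\Eee$, namely $\nu_i \triangleright \nu_{i+1}$.

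For the inductive step, I assume the result at gap $d - 1$, so that $\nu_i \triangleright \nu_{j-1} + (d-2)$, and combine it with $\nu_{j-1} \triangleright \nu_j$ to aim for $\nu_i \triangleright \nu_j + (d-1)$. Write $p, q, r$ for the colors of $\nu_i, \nu_{j-1}, \nu_j$ respectively. The first step is to unpack $\triangleright$ into a numerical minimum on size gaps: inspecting \eqref{Ord} together with the total order \eqref{orD}, the minimum value of $k - l$ ensuring $k_c \triangleright l_{c'}$ depends only on the pair $(c, c')$, and I denote it $f(c, c')$. A short case analysis splitting on whether each color is primary or secondary yields
\[ f(c, c') = \begin{cases} 1 & \text{if } c > c' \text{ in } \C \sqcup \Cc,\text{ or if } c = c' \text{ with both in } \C, \\ 2 & \text{otherwise.}\end{cases} \]

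With this, the inductive hypothesis becomes $|\nu_i| - |\nu_{j-1}| \geq (d-2) + f(p, q)$ and the relation $\nu_{j-1} \triangleright \nu_j$ becomes $|\nu_{j-1}| - |\nu_j| \geq f(q, r)$; adding the two and comparing with the target $|\nu_i| - |\nu_j| \geq (d-1) + f(p, r)$, the inductive step reduces to the purely combinatorial inequality
\[ f(p, q) + f(q, r) \geq 1 + f(p, r). \]
This is automatic when $f(p, r) = 1$, since $f$ is everywhere at least $1$. The essential case is $f(p, r) = 2$, i.e., $p < r$ in the total order, or $p = r$ with both in $\Cc$; here I rule out $f(p, q) = f(q, r) = 1$ by hand. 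Indeed, $f(p, q) = 1$ forces $p > q$ or ($p = q$ with both in $\C$), and similarly for $f(q, r) = 1$; pairing the four combinations gives either $p > r$ (contradicting $p \leq r$) or $p = q = r \in \C$ (contradicting $f(p, r) = 2$).

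The main obstacle, and essentially the only non-routine part, is the bookkeeping needed to extract $f$ cleanly from \eqref{Ord}, which mixes the strict relation $\succ$ with its non-strict companion $\succeq$ and branches on the primary/secondary status of both colors. Once $f$ is in hand, the core of the argument is just a transitivity observation in the totally ordered set $\C \sqcup \Cc$.
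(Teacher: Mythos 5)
Your proof is correct and follows essentially the same route as the paper's: the paper weakens each relation $\nu_k\triangleright\nu_{k+1}$ to $\nu_k\succeq\nu_{k+1}+1$, telescopes to get $\nu_i\succeq\nu_j+j-i$, and restores strictness when both endpoints are secondary, which is exactly the content of your triangle inequality $f(p,q)+f(q,r)\geq 1+f(p,r)$ applied inductively. Your extraction of $f$ from \eqref{Ord} and \eqref{lex} is accurate, and the case analysis ruling out $f(p,q)=f(q,r)=1$ when $f(p,r)=2$ is the correct local substitute for the paper's global "all colors along the chain would have to coincide" argument.
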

\bi
\section{Bressoud's algorithm}\label{sct3}
Here we adapt the algorithm given by Bressoud in his bijective proof of Schur's partition theorem \cite{BR80}. The mappings are easy to
describe and execute, but their justifications are more subtle and are
given in the next section.
\bi
\subsection{Machine $\Phi$: from $\Od$ to $\E$} Let us consider the following machine $\Phi$:
\begin{itemize}
\item[\Soo:]For a sequence $\la= \la_1,\ldots,\la_t$, take the smallest $i<t$ such that  $\la_i,\la_{i+1}\in \Pp$ and $\la_i\succ \la_{i+1}$ but $\la_i\not\gg \la_{i+1}$, if it exists, and replace
\begin{equation}
\begin{array}{l c l l}
\la_i &\leftarrowtail& \la_i+\la_{i+1}&\text{as a part in } \Sc\\
\la_j &\leftarrow& \la_{j+1}& \text{for all}\quad i<j<t\,\, 
\end{array}
\end{equation}
and move to \Stt. We call such a pair of parts a \textit{troublesome} pair. We observe that $\la$ loses two parts in $\Pp$ and gains one part in $\Sc$. 
The new sequence is $\la = \la_1,\ldots,\la_{t-1}$. Otherwise, exit from the machine.\\\\
\item[\Stt:]For $\la= \la_1,\ldots,\la_t$, take the smallest $i<t$ such that $(\la_i,\la_{i+1})\in \Pp\times\Sc$ and $\la_i\not\gg \la_{i+1}$ if it exists,  and replace 
\begin{equation}
(\la_i,\la_{i+1}) \looparrowright (\la_{i+1}+1,\la_i-1)\in \Sc\times\Pp
\end{equation}
and redo \Stt. We say that the parts $\la_i,\la_{i+1}$ are \textit{crossed}. Otherwise, move to \Soo.
\end{itemize} 
\bi
Let $\Phi(\la)$ be the resulting sequence after putting  any $\la=(\la_1,\ldots,\la_t)\in \Od$ in $\Phi$. 
This transformation preserves the size and the commutative product of primary colors of partitions.  
\begin{ex}
For $\C= \{a<b<c<d\}$, let us apply this machine on the partition $(5_b,3_d,2_a,1_d,1_c,1_b,1_a)$:
\begin{equation}\label{example1}
\begin{array}{ccccccccccc}
\begin{matrix}
5_b\\
3_d\\
\underline{2_a}\\
\underline{1_d}\\
1_c\\
1_b\\
1_a
\end{matrix} &\rightarrowtail&
\begin{matrix}
5_b\\
\mathbf{3_d}\\
\mathbf{3_{ad}}\\
1_c\\
1_b\\
1_a
\end{matrix}&
\looparrowright&
\begin{matrix}
5_b\\
4_{ad}\\
2_d\\
\underline{1_c}\\
\underline{1_b}\\
1_a
\end{matrix}&
\rightarrowtail&
\begin{matrix}
5_b\\
4_{ad}\\
\mathbf{2_d}\\
\mathbf{2_{bc}}\\
1_a
\end{matrix}&
\looparrowright&
\begin{matrix}
5_b\\
4_{ad}\\
3_{bc}\\
\underline{1_{d}}\\
\underline{1_a}
\end{matrix}&
\rightarrowtail&
\begin{matrix}
5_b\\
4_{ad}\\
3_{bc}\\
2_{ad}
\end{matrix}
\end{array}\,\cdot
\end{equation}
This example shows that $\Phi(\Od)\not\subseteq \Eee$.
\end{ex}
\bi
\subsection{Machine $\Psi$: on $\E$}
Let us consider the following machine $\Psi$:
\begin{itemize}
\item[\Soo:]For a sequence $\nu= \nu_1,\ldots,\nu_t$, take the greastest $i\leq t$ such that $\nu_i\in \Sc$ if it exists.  If $\nu_{i+1}\in\Pp$ and $\beta(\nu_i)\not\succ \nu_{i+1}$, then replace 
\begin{equation}
(\nu_i,\nu_{i+1}) \looparrowright (\nu_{i+1}+1,\nu_i-1)\in \Pp\times\Sc
\end{equation}
and redo \Soo. We say that the parts $\nu_i,\nu_{i+1}$ are \textit{crossed}. Otherwise, move to \Stt. If there are no more parts in $\Sc$, exit from the machine.\\\\
\item[\Stt:]For $\nu= \nu_1,\ldots,\nu_t$, take the the greatest $i\leq t$ such that $\nu_i\in \Sc$. By \Soo, it satisfies $\beta(\nu_i)\succ \nu_{i+1}$. Then replace
\begin{equation}
\begin{array}{l c l l}
\nu_{j+1} &\leftarrow& \nu_{j}& \text{for all}\quad t\geq j>i\,\, \\
(\nu_i) &\rightrightarrows& (\alpha(\nu_i),\beta(\nu_i))&\text{as a pair of parts in }\Pp \,,
\end{array}
\end{equation}
and move to \Soo.  We say that the part $\nu_i$ \textit{splits}. We observe that $\nu$ gains two parts in $\Pp$ and loses one part in $\Sc$. 
The new sequence  is $\nu = \nu_1,\ldots,\nu_{t+1}$.
\end{itemize}
\bi
Let $\Psi(\nu)$ be the resulting sequence after putting  any $\nu=(\nu_1,\ldots,\nu_t)\in \E$ in $\Psi$.
This transformation preserves the size and the product of primary colors of partitions.
\begin{exs}
For example, we choose $\C=\{a<b<c<d<e<f\}$ and we apply the machine $\Psi$ respectively on  $(4_{ae},3_{cd},3_{ab}), (4_a,3_{ae},2_{cd},1_b)$ and 
$(4_e,3_{ef},3_{cd},3_{ab},1_f)$, and we obtain 
\[ \begin{array}{ccccccccccc}
\begin{matrix}
4_{ae}\\
3_{cd}\\
2_a+1_b
\end{matrix} &
\rightrightarrows&
\begin{matrix}
4_{ae}\\
2_c+1_d\\
2_a\\
1_b
\end{matrix} &
\looparrowright&
\begin{matrix}
4_{ae}\\
3_a\\
1_d+1_c\\
1_b
\end{matrix}&
\rightrightarrows&
\begin{matrix}
2_e+2_a\\
3_a\\
1_d\\
1_c\\
1_b
\end{matrix}
&
\looparrowright&
\begin{matrix}
4_a\\
2_a+1_e\\
1_d\\
1_c\\
1_b
\end{matrix}
&
\rightrightarrows&
\begin{matrix}
4_a\\
2_a\\
1_e\\
1_d\\
1_c\\
1_b
\end{matrix}
\end{array}\,,
\]  
\[ \begin{array}{ccccc}
\begin{matrix}
4_a\\
3_{ae}\\
1_d+1_c\\
1_b
\end{matrix} &
\rightrightarrows&
\begin{matrix}
4_a\\
2_a+1_e\\
1_d\\
1_c\\
1_b
\end{matrix} &
\rightrightarrows&
\begin{matrix}
4_a\\
2_a\\
1_e\\
1_d\\
1_c\\
1_b
\end{matrix}
\end{array}\,,
\] 
\[
\begin{array}{ccccccccccccc}
\begin{matrix}
4_e\\
3_{ef}\\
3_{cd}\\
2_a+1_b\\
1_f
\end{matrix} &
\looparrowright&
\begin{matrix}
4_e\\
3_{ef}\\
3_{cd}\\
2_f\\
1_b+1_a
\end{matrix} &
\rightrightarrows&
\begin{matrix}
4_e\\
3_{ef}\\
2_c+1_d\\
2_f\\
1_b\\
1_a
\end{matrix}
\looparrowright&
\begin{matrix}
4_e\\
3_{ef}\\
3_f\\
1_d+1_c\\
1_b\\
1_a
\end{matrix}&
\rightrightarrows&
\begin{matrix}
4_e\\
2_e+1_f\\
3_f\\
1_d\\
1_c\\
1_b\\
1_a
\end{matrix}
\looparrowright&
\begin{matrix}
4_e\\
4_f\\
1_f+1_e\\
1_d\\
1_c\\
1_b\\
1_a
\end{matrix}&
\rightrightarrows&
\begin{matrix}
4_e\\
4_f\\
1_f\\
1_e\\
1_d\\
1_c\\
1_b\\
1_a
\end{matrix}
\end{array}\,\cdot
\]
With these examples, we can see that $\Psi$ is not injective on $\E$ and  $\Psi(\E)\not \subseteq \Od$.
\end{exs}
\section{Proof of \Thm{th3}}\label{sct4}
In this section, we prove \Thm{th3} by showing the following theorem.
\begin{theo}\label{thm6}
The transformation $\Phi$ describes an injection from $\Od$ into $\E$ such that $\Psi\circ\Phi_{|\Od} = Id_{|\Od}$.  
\end{theo}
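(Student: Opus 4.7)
My plan is to establish the theorem in two stages: first that $\Phi$ maps $\Od$ into $\E$, and second that $\Psi$ inverts $\Phi$ on this image, which immediately implies the injectivity of $\Phi$.

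For the first stage, I would begin with termination of $\Phi$ on any input $\la = (\la_1, \ldots, \la_t) \in \Od$. Each execution of Step 1 converts two consecutive primary parts into a single secondary part, strictly reducing the sequence length, so Step 1 runs at most $t$ times. Inside a single invocation of Step 2, each crossing moves the bubbling secondary one position upward; because only $\Pp \times \Sc$ pairs are crossed, the bubbling secondary cannot pass another secondary, so Step 2 must halt after finitely many crossings. Next I would prove by induction on the number of completed Step 1 + Step 2 cycles that the current working sequence $\mu$ satisfies the invariant: every pair $(\mu_j, \mu_{j+1})$ involving at least one secondary part is ordered by $\gg$, and every pair of consecutive primary parts is ordered by $\succ$. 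The analysis of Step 1 is immediate: merging $(\la_i, \la_{i+1}) \in \Pp^2$ with $\la_i \succ \la_{i+1}$ only introduces the new neighbour relations $(\la_{i-1}, \la_i + \la_{i+1})$ and $(\la_i + \la_{i+1}, \la_{i+2})$, both of which are handled by \Lem{lem1}. The heart of the argument is Step 2: when a pair $(\la_i, \la_{i+1}) \in \Pp \times \Sc$ with $\la_i \not\gg \la_{i+1}$ is crossed into $(\la_{i+1}+1, \la_i-1) \in \Sc \times \Pp$, the equivalence \eqref{oe} already gives $\la_{i+1}+1 \gg \la_i-1$; one then uses \eqref{sw1}, \eqref{sw}, and \eqref{chaine} of \Lem{lem2} to verify that the new pairs $(\la_{i-1}, \la_{i+1}+1)$ and $(\la_i-1, \la_{i+2})$ are still properly related, with the understanding that a surviving $(\Pp, \Sc)$ violation triggers the next crossing of Step 2. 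Since $\Phi$ exits only when no troublesome primary pair and no violating $\Pp \times \Sc$ pair remains, the invariant at exit forces $\Phi(\la) \in \E$.

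For the second stage, I would argue atomic-operation by atomic-operation that $\Psi$ undoes $\Phi$. A merge in $\Phi$'s Step 1, rewriting $(l_p, k_q) \in \Pp^2$ as $(l+k)_{pq} \in \Sc$, is precisely inverted by the split in $\Psi$'s Step 2, which sends $(l+k)_{pq}$ to $(\alpha((l+k)_{pq}), \beta((l+k)_{pq})) = (l_p, k_q)$ by \eqref{ab} together with \eqref{half1}. A crossing $(\la_i, \la_{i+1}) \looparrowright (\la_{i+1}+1, \la_i-1)$ of $\Phi$'s Step 2 is inverted by the corresponding crossing of $\Psi$'s Step 1. The subtlety is that $\Phi$ picks the smallest eligible index while $\Psi$ picks the greatest, so I must show that the operations are undone in the exact reverse order. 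I would run a reverse induction on the total number of operations: in $\Phi(\la)$, the last operation performed by $\Phi$ must have created the lowest remaining secondary part, since any secondary constructed afterwards could only bubble upward and never descend below a pre-existing one; that lowest secondary is precisely the one $\Psi$ first encounters in Step 2. Peeling off that operation returns the sequence to the state of $\Phi$ before its last merge, and a parallel argument on a run of consecutive Step 2 crossings of $\Phi$ matches them with a run of Step 1 crossings of $\Psi$ taken in the reverse order.

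The hard part will be the invariant verification for Step 2 in the first stage. Once the bubbling secondary settles, one must confirm that its new upper and lower neighbours satisfy the correct relation across four sub-cases according to whether each neighbour lies in $\Pp$ or $\Sc$, and according to whether the primary colors involved agree or lie in a $\Ccc$ pair. The thresholds in \eqref{Ord} and \eqref{Ordd} shift with whether each color belongs to $\C$ or $\Cc$, and the half-functions $\alpha$ and $\beta$ from \eqref{ab} are the natural bookkeeping tools for the resulting inequalities; the three estimates \eqref{sw1}, \eqref{sw}, and \eqref{chaine} of \Lem{lem2} are precisely designed for these configurations, and the technical work will reduce to invoking them correctly in each subcase.
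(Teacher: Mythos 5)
The overall architecture you propose — (i) an invariant showing $\Phi$ lands in $\E$, then (ii) operation-by-operation matching to show $\Psi$ undoes $\Phi$ — is the same one the paper uses via Propositions \ref{pr1}, \ref{pr2}, and \ref{pr10}. However, there is a genuine gap in stage (ii), and your diagnosis that the "hard part" lies only in stage (i) is off target.

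You assert that "a crossing $(\la_i, \la_{i+1}) \looparrowright (\la_{i+1}+1, \la_i-1)$ of $\Phi$'s Step 2 is inverted by the corresponding crossing of $\Psi$'s Step 1" as if it were a given, and identify the only remaining subtlety as the order of operations. But the crossing conditions of the two machines are not negations of each other. $\Phi$ crosses $(l_p,k_q)\in\Pp\times\Sc$ exactly when $l_p\not\gg k_q$, whereas $\Psi$ crosses the resulting pair $((k+1)_q,(l-1)_p)$ exactly when $\beta((k+1)_q)\not\succ(l-1)_p$; by \eqref{eo} of Lemma \ref{lem1} the latter is equivalent to $l_p\gg\alpha(k_q)$. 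So you must show that $\Phi$ only crosses pairs for which $l_p\gg\alpha(k_q)$ also holds, and this is not implied by $l_p\not\gg k_q$ — the two conditions constrain $l_p$ from opposite sides. The paper proves this via Lemma \ref{lem3}: the block of primary parts above the merge point lives in $\E\cap\Od\subset\Eee$, so the $1$-different-distance property (\eqref{ee}) combined with \eqref{aj} yields $\pi_j\gg\pi_i+i-j-1\succeq\alpha(\pi_i+\pi_{i+1}+i-j-1)$, which is exactly the needed $\gg$-bound. Your sketch never invokes this, and in fact your stated invariant ("every pair of consecutive primary parts is ordered by $\succ$") is too weak to supply it: what is needed above the active merge point is $\gg$-order (equivalently the $\Eee$ property), which holds because $\Phi$ processes troublesome pairs from the top down and hence all pairs above the current merge are non-troublesome. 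This is precisely what the paper's block decomposition $(\delta^u,\gamma^u,\mu^u)$ with $\gamma^u\in\E\cap\Od$ encodes. Without the stronger invariant and the explicit appeal to Lemma \ref{lem3}, your stage (ii) does not go through.
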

\Thm{thm6} follows from the next three propositions whose proofs can be found in Section $9$.
\\In the following for any sequence $\mathcal{U} = u_1,\ldots,u_t$, we set $g(\mathcal{U})=u_1$ and $s(\mathcal{U})=u_t$ respectively the first and the last terms of $\mathcal{U}$.
\begin{prop}\label{pr1}
Let us consider any $\la=(\la_1,\ldots,\la_t)\in \Od$. Then, in the process $\Phi$ on $\la$, before the $u^{th}$ application of \Soo, 
there exists a triplet of partitions $(\delta^u,\gamma^u,\mu^u)\in \E\times (\E\cap\Od) \times \Od$ such that the sequence obtained is $\delta^u,\gamma^u,\mu^u$ and which satisfies the following conditions:
\begin{enumerate}
\item The $u^{th}$ application of \So occurs in the pairs $(s(\gamma^u),g(\mu^u))$, 
\item $s(\delta^{u})$ is the $(u-1)^{th}$ secondary part of $\delta^u$ and satisfies $s(\delta^u)\gg g(\gamma^u)$,
\item $\mu^{u+1}$ is the tail of the partition $\mu^{u}$ and has at least one fewer part than $\mu^{u}$.
\item $\delta^{u}$ is the head of $\delta^{u+1}$. 
\end{enumerate}     
\end{prop}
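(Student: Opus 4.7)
The plan is to proceed by induction on $u$. For $u = 1$, I set $\delta^1 = \emptyset$, locate the first troublesome pair $(\la_i, \la_{i+1})$ in $\la$, and take $\gamma^1 = (\la_1, \ldots, \la_i)$, $\mu^1 = (\la_{i+1}, \ldots, \la_t)$. Conditions (1) and (2) are immediate (the content of (2) is vacuous), and $\gamma^1 \in \E \cap \Od$ since every consecutive pair in $\gamma^1$ is $\gg$-ordered by the minimality of $i$.

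For the inductive step, write $\gamma^u = (\la_1, \ldots, \la_m)$ and let $\mu^u_>$ denote $\mu^u$ minus its first entry. Step 1 fuses $\la_m = s(\gamma^u)$ with $g(\mu^u)$ into a secondary part $\sigma$, yielding $\delta^u \cdot (\la_1, \ldots, \la_{m-1}) \cdot \sigma \cdot \mu^u_>$. Since $\delta^u \in \E$, the prefix $(\la_1, \ldots, \la_{m-1})$ is $\gg$-ordered, and $s(\delta^u) \gg \la_1$ by condition (2), the only potentially bad pair lies between $\sigma$ and the primary immediately above it. Step 2 then iteratively crosses the topmost bad primary--secondary pair via $(\la, \sigma') \looparrowright (\sigma'+1, \la-1)$, bubbling the secondary upward. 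By \Lem{lem1}, each forced swap converts a non-$\gg$ pair into a $\gg$-ordered one; the process halts after $r$ crossings for some $0 \leq r \leq m-1$. Crucially, even when $r = m-1$ the bubble cannot continue into $\delta^u$: by condition (2) for $u \geq 2$ we have $s(\delta^u) \in \Sc$, and Step 2 acts only on primary--secondary pairs.

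Once Step 2 exits, I define $\delta^{u+1} = \delta^u \cdot (\la_1, \ldots, \la_{m-1-r}) \cdot (\sigma + r)$ and split the remaining suffix $(\la_{m-r}-1, \ldots, \la_{m-1}-1) \cdot \mu^u_>$ at its first troublesome pair to form $\gamma^{u+1}$ and $\mu^{u+1}$ (if no troublesome pair remains, the next invocation of Step 1 exits the machine and the induction terminates). Conditions (1), (3), (4) then follow from the construction: (4) immediately, (3) because $\mu^{u+1}$ is a tail of $\mu^u_> \subsetneq \mu^u$, and (1) by the chosen split. The content of (2)---namely $s(\delta^{u+1}) = \sigma + r \gg g(\gamma^{u+1})$ and $\sigma+r$ being the $u$-th secondary part---holds because Step 2 terminated precisely when this pair became $\gg$-ordered, and $\sigma+r$ is by construction the only new secondary added.

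The main obstacle is to verify that $\delta^{u+1} \in \E$ and $\gamma^{u+1} \in \E \cap \Od$. For the former, I must check that placing $\sigma + r$ above the uncrossed primaries and, in the case $r = m-1$, immediately after $s(\delta^u)$, preserves $\gg$-ordering; this follows from \Lem{lem1}, \Lem{lem2}, and the behavior of $\alpha, \beta$ under shifts recorded in \eqref{abba} and \eqref{aj}. For the latter, one must check that the junction $(\la_{m-1}-1, g(\mu^u_>))$ is $\gg$-ordered and that the decremented primaries form a $\gg$-chain; this uses the characterization \eqref{nog} of troublesome pairs, the original $\gg$-ordering of $\gamma^u$, and the fact that the original pair $(\la_m, g(\mu^u))$ was only troublesome, not severely out of order.
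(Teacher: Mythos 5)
Your proposal takes the same structural approach as the paper's: induction on the number of applications of Step 1, tracking the triplet $(\delta^u,\gamma^u,\mu^u)$ and observing that Step 2 mechanically cannot bubble the new secondary part past $s(\delta^u)$ once the latter is itself secondary. However, you defer the central verification---that $\delta^{u+1}$ remains in $\E$, and in particular that $s(\delta^u)\gg\sigma+r$ when the bubble travels all the way up to $\delta^u$---to ``follows from \Lem{lem1}, \Lem{lem2}, \eqref{abba} and \eqref{aj}.'' The machine halting at a secondary--secondary pair gives no ordering information whatsoever: this is exactly the inequality that must be proved, and proving it is where essentially all the work of the proposition lies. The paper establishes it via a case split on the location of the next troublesome pair. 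When it is $(\lambda_{i-1}-1,\lambda_{i+2})$ immediately after the decremented block, the observation $\beta(\lambda_i+\lambda_{i+1})=\lambda_{i+1}\succ\lambda_{i+2}=\beta(\lambda_{i-1}+\lambda_{i+2}-1)$ together with \eqref{sw1} of \Lem{lem2} gives $(\lambda_i+\lambda_{i+1}+1)\gg(\lambda_{i-1}+\lambda_{i+2}-1)$, which is then shifted through the subsequent crossings. When the next troublesome pair lies further right at $(\lambda_{i+2+x},\lambda_{i+3+x})$, the chain $\lambda_i\succ\lambda_{i+1}\succ\lambda_{i+2}\succeq\lambda_{i+2+x}+x\succ\lambda_{i+3+x}+x$ combined with \eqref{gam} does the job. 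Omitting this two-case analysis leaves a real hole at the most delicate point of the induction.

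A secondary slip: you justify $s(\delta^{u+1})=\sigma+r\gg g(\gamma^{u+1})$ by saying ``Step 2 terminated precisely when this pair became $\gg$-ordered.'' Termination of Step 2 tells you $\lambda_{m-1-r}\gg\sigma+r$ for the \emph{uncrossed} pair above the secondary, which is a different inequality. The correct deduction is that the \emph{last} crossing occurred because $\lambda_{m-r}\not\gg\sigma+r-1$, which by \eqref{oe} of \Lem{lem1} is equivalent to $(\sigma+r)\gg(\lambda_{m-r}-1)=g(\gamma^{u+1})$; and in the $r=0$ case, where no crossings occur, one needs the direct computation the paper performs to show $\lambda_i+\lambda_{i+1}\gg\lambda_{i+2}$.
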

Note that the first triplet for $u=1$ has the form $(\emptyset,\gamma^1,\mu^1)$ with $(\gamma^1,\mu^1)\in(\E\cap\Od) \times \Od$ and $(s(\gamma^u),g(\mu^u))$ the first troublesome pair of $\la$.
The fact that $\Phi(\Od)\subset \E$ follows from \Prp{pr1} since $\mu^u$ strictly decreases in terms of number of parts and the process stops as soon as $\mu^{u}=\emptyset$. In fact, if $\mu^{u}\neq \emptyset$,
then $g(\mu^u)$ exists and we can still apply \So on the pair $(s(\gamma^u),g(\mu^u))$. The last triplet has then the form $(\delta^{S+1},\gamma^{S+1},\emptyset)$ with  $(\delta^{S+1},\gamma^{S+1})\in  \E\times (\E\cap\Od)$,
$s(\delta^{S+1})$ the $S^{th}$ and last secondary part of $\Phi(\la)$ and $s(\delta^{S+1})\gg g(\gamma^{S+1})$ if $\gamma^{S+1}\neq \emptyset$.
\begin{ex}
We again take the example $\la = (5_b,3_d,2_a,1_d,1_c,1_b,1_a)$ given in \eqref{example1}. We summarize the triplets of \Prp{pr1} in the following table:
\[
\begin{array}{|c|ccc|}
\hline
u&\delta^u&\gamma^u&\mu^u\\
\hline
1&\emptyset&5_b,3_d,2_a&1_d,1_c,1_b,1_a\\
\hline
2&5_b,4_{ad}&2_d,1_c&1_b,1_a\\
\hline
3&5_b,4_{ad},3_{bc}&1_d&1_a\\
\hline
4&5_b,4_{ad},3_{bc},2_{ad}&\emptyset&\emptyset\\
\hline
\end{array}\,\cdot
\]
\end{ex}
\begin{prop}\label{pr2}
Let us consider any $\nu=\nu_1,\ldots,\nu_t\in \E$. Then, in the process $\Psi$ on $\nu$, after the $(v-1)^{th}$ application of \Stt, there exists a triplet of partitions $(\delta^v,\gamma^v,\mu^v)$ with $\delta^v\in \E$ and
$\gamma^v,\mu^v$ some sequences of primary parts, such that the sequence obtained is $\delta^v,\gamma^v,\mu^v$ and which satisfies the following conditions:
\begin{enumerate}
\item $(s(\gamma^v),g(\mu^v))$ is the troublesome pair resulting from the $(v-1)^{th}$ splitting in \Stt,
\item $s(\delta^{v})\in \Sc$ so that the next iterations of \So after the $(v-1)^{th}$ \St occurs on this part,
\item $\mu^{v}$ is the tail of the sequence $\mu^{v+1}$ and has at least one fewer part than  $\mu^{v+1}$.
\item $\delta^{v+1}$ is the head of $\delta^{v}$.
\end{enumerate}    
\end{prop}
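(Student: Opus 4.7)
My plan is to mirror the structure of the proof of \Prp{pr1}, arguing by induction on $v$ and tracking the sequence through one full pass of \Soo{} followed by one application of \Stt{}. For the base case $v=1$, I take $\delta^1$ to be the head of $\nu$ ending at its last secondary part, $\gamma^1=\emptyset$, and $\mu^1$ the trailing primary segment (if $\nu$ has no secondary part, $\Psi$ acts trivially and the claim is vacuous). Condition (1) is vacuous at $v=1$, condition (2) holds by construction, and conditions (3)--(4) are then established in the transition from $v$ to $v+1$.

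For the inductive step, assume the invariant at stage $v$, so the current sequence is $\delta^v,\gamma^v,\mu^v$ with $s(\delta^v)=k_q\in\Sc$ by condition (2). The iterations of \Soo{} that follow the $(v-1)^{th}$ \Stt{} act on $k_q$, crossing it rightward through the primary parts that follow: each cross replaces $(k_q,l_p)$ by $((l+1)_p,(k-1)_q)$ whenever $\beta(k_q)\not\succ l_p$. The central claim to establish is that these crossings exhaust exactly the primary parts lying strictly before $g(\mu^v)$; that is, the descending secondary halts immediately above the current troublesome pair and never crosses into $\mu^v$. I will prove this using \eqref{eo} of \Lem{lem1} together with condition (1): the halves $s(\gamma^v)=\alpha(\sigma')$ and $g(\mu^v)=\beta(\sigma')$ of the secondary $\sigma'$ split at stage $v-1$ satisfy a precise numerical relation which, combined with \Lem{lem2}, forces the stopping condition $\beta(k'_q)\succ g(\mu^v)$ to first hold exactly when $k'_q$ has descended to the boundary between $\gamma^v$ and $\mu^v$. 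Once \Soo{} halts, \Stt{} splits the descended secondary $k'_q$ into the primaries $\alpha(k'_q)$ and $\beta(k'_q)$, which form the new troublesome pair by \eqref{abba}.

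The new triplet is then defined by taking $\delta^{v+1}$ to be the head of $\delta^v$ ending at its penultimate secondary (or empty, in which case $\Psi$ terminates), $\gamma^{v+1}$ to concatenate the primary parts of $\delta^v$ lying after its penultimate secondary, the portion of $\gamma^v$ pushed upward by the crossings, and $\alpha(k'_q)$, and $\mu^{v+1}$ to begin with $\beta(k'_q)$ followed by $\mu^v$. Conditions (1)--(4) at stage $v+1$ then follow directly: $(s(\gamma^{v+1}),g(\mu^{v+1}))=(\alpha(k'_q),\beta(k'_q))$ is the required troublesome pair; $s(\delta^{v+1})$ is the penultimate secondary of $\delta^v$, hence in $\Sc$; $\mu^v$ is the tail of $\mu^{v+1}$ and shorter by at least one part; and $\delta^{v+1}$ is by construction the head of $\delta^v$, with $\delta^{v+1}\in\E$ inherited as an initial segment of $\delta^v$. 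The hard part will be the central claim about the stopping position of \Soo{}: establishing it rigorously hinges on carefully tracking how the $\gg$-ordering within $\delta^v$ and the arithmetic of $\alpha,\beta$ cooperate so that the descending $k_q$ lands precisely at the troublesome pair created at stage $v-1$.
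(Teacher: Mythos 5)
Your proposal has a genuine gap in two places, and the second one is fatal.

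First, the base case is assigned backwards. You set $\gamma^1=\emptyset$ and $\mu^1=$ the trailing primary segment; the paper's proof sets $\gamma^1$ to be the trailing primaries and $\mu^1=\emptyset$, and this is not cosmetic. Condition $(3)$ requires $\mu^v$ to be a tail of $\mu^{v+1}$ with strictly fewer parts. With your assignment, if the last secondary part of $\nu$ crosses $j>0$ of the trailing primaries before splitting, those $j$ parts are incremented and sent into $\gamma^2$, so they disappear from the $\mu$-side: $\mu^1=\nu_{i_S+1},\ldots,\nu_t$ is then not a tail of $\mu^2=\beta(\cdot),\nu_{i_S+1+j},\ldots,\nu_t$. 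With the paper's assignment $\mu^1=\emptyset$, the containment is trivial.

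Second and more seriously, your ``central claim'' --- that the descending secondary $s(\delta^v)$ crosses \emph{exactly} the primaries of $\gamma^v$ lying before $g(\mu^v)$, so it halts right at the boundary of the troublesome pair --- is false, and the paper never asserts it. What \Lem{lem2} (together with the invariant $s(\delta^v)\gg g(\gamma^v)$) actually yields is only the one-sided bound: the splitting of $s(\delta^v)$ occurs somewhere \emph{before} $g(\mu^v)$, either strictly before $s(\gamma^v)$ or between $s(\gamma^v)$ and $g(\mu^v)$. It need not reach the boundary. The paper's own example illustrates this: at $v=3$ one has $\delta^3=(5_b,4_{ad})$, $\gamma^3=(2_d,1_c)$, $\mu^3=(1_b,1_a)$; the part $4_{ad}$ crosses $2_d$ (giving $3_{ad},3_d$) but then $\beta(3_{ad})=1_d\succ 1_c$, so it splits immediately, leaving $1_c$ uncrossed. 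Consequently $\mu^4=(1_d,1_c,1_b,1_a)$ absorbs the uncrossed remainder $1_c$ of $\gamma^3$, not just $\beta(k'_q)$ and $\mu^3$. Your formula $\mu^{v+1}=\beta(k'_q),\mu^v$ drops exactly this remainder, so the induction does not close. The correct description, as in the paper, is $\mu^{v+1}=\beta(\sigma'),\pi_{j+1},\ldots,\pi_r,\mu^v$ where $\pi_{j+1},\ldots,\pi_r$ are the uncrossed parts of $\gamma^v$, and then $\mu^v$ being a tail of $\mu^{v+1}$ is automatic without any exactness claim about the stopping position.
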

The process stops as soon as $\delta^v=\emptyset$, which means that we  have split every secondary part of $\nu$. If we set $S$ to be the number of secondary parts of $\nu$, the last triplet then
has the form $(\emptyset, \gamma^{S+1},\mu^{S+1})$ with $(s(\gamma^{S+1}),g(\mu^{S+1}))$ being a troublesome pair of primary parts. Also, we remark that the first triplet for $v=1$ is such that
$(\delta^1,\gamma^1,\emptyset)$ with $\delta^1$ equal to the head of $\nu$ up to the last secondary part, and with $\gamma^1$ equal to the tail of $\nu$ after this last part, so that $(\delta^1,\gamma^1)\in \E \times(\E\cap \Od)$ with $s(\delta^1)\gg g(\gamma^1)$ if $\gamma^1\neq \emptyset$.
\begin{ex}
We take the example $\nu = \Phi(\la)=5_b,4_{ad},3_{bc},2_{ad}$ in \eqref{example1}. We summarize the triplets of \Prp{pr2} in the following table:
\[
\begin{array}{|c|ccc|}
\hline
v&\delta^v&\gamma^v&\mu^v\\
\hline
1&5_b,4_{ad},3_{bc},2_{ad}&\emptyset&\emptyset\\
\hline
2&5_b,4_{ad},3_{bc}&1_d&1_a\\
\hline
3&5_b,4_{ad}&2_d,1_c&1_b,1_a\\
\hline
4&\emptyset&5_b,3_d,2_a&1_d,1_c,1_b,1_a\\
\hline
\end{array}\,\cdot
\]
\end{ex}
We now show that $\Psi\circ\Phi_{|\Od} = Id_{|\Od}$  using the following proposition.
\begin{prop}\label{pr10}
 For any $\la\in \Od$, if we set $\nu=\Phi(\la)$ and $S$ to be the number of secondary parts of $\nu$, then for any $v\in [1,S+1]$, the triplet of \Prp{pr2} is equal to the triplet of \Prp{pr1} for
 $u=S+2-v$.
\end{prop}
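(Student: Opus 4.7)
The plan is to prove \Prp{pr10} by induction on $v\in [1,S+1]$, showing that each iteration of $\Psi$ on $\Phi(\la)$ exactly undoes, in reverse order, one iteration of $\Phi$ on $\la$. Explicitly, denoting the triplets of \Prp{pr1} by $(\delta^u,\gamma^u,\mu^u)$ and the triplets of \Prp{pr2} by $(\tilde{\delta}^v,\tilde{\gamma}^v,\tilde{\mu}^v)$, the inductive claim is
\[
(\tilde{\delta}^v,\tilde{\gamma}^v,\tilde{\mu}^v)=(\delta^{S+2-v},\gamma^{S+2-v},\mu^{S+2-v}).
\]
The base case $v=1$ is immediate: by construction $\Psi$'s first triplet is the entire partition $\Phi(\la)$ in the $\delta$-slot with empty $\gamma,\mu$; and since $\Phi$ halts with $\mu^{S+1}=\emptyset$ and (by the remark following \Prp{pr1}) $\delta^{S+1}=\Phi(\la)$ and $\gamma^{S+1}=\emptyset$, the two triplets coincide.

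For the inductive step, assume the claim for $v$ and consider one full iteration of $\Psi$. By \Prp{pr2}, this iteration consists of splitting the last secondary part $s(\tilde{\delta}^v)$ via $\rightrightarrows$ in \Stt, and then performing a (possibly empty) chain of $\looparrowright$ crossings in \Soo until a primary part settles into position. I would compare this with the transition from the $(S+1-v)$-th to the $(S+2-v)$-th triplet of $\Phi$, which by \Prp{pr1} consists of picking a troublesome pair $(s(\gamma^{S+1-v}),g(\mu^{S+1-v}))$ and merging it via $\leftarrowtail$ in \Soo, followed by crossings via $\looparrowright$ in \Stt that push the new secondary part into its final position at the end of $\delta^{S+2-v}$. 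The inversion of the merging step by the splitting step is immediate from the identity $(2k)_{a_ia_j}=k_{a_j}+k_{a_i}$ and $(2k+1)_{a_ia_j}=(k+1)_{a_i}+k_{a_j}$ recorded in \eqref{half1}: if a troublesome pair $(l_p,l'_q)$ with $l_p,l'_q\in \Pp$ consecutive in $\succ$ is merged into the secondary part $(l+l')_{pq}$, then $(l+l')_{pq}$ splits via $\alpha,\beta$ back into $(l_p,l'_q)$.

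The core of the induction is then to show that the sequence of crossings performed by $\Psi$'s \Soo on the freshly split primary part is the exact reversal of the sequence of crossings performed by $\Phi$'s \Stt on the newly created secondary part. Each such crossing has the shape $(l_p,k_q)\leftrightarrow ((k+1)_q,(l-1)_p)$ with $l_p\in \Pp,k_q\in \Sc$, and the two equivalences in \Lem{lem1}, namely
\[
l_p\not\gg k_q\iff (k+1)_q\gg (l-1)_p\quad\text{and}\quad l_p\gg \alpha(k_q)\iff \beta((k+1)_q)\not\succ (l-1)_p,
\]
are precisely the dual conditions that govern whether $\Phi$'s \Stt and $\Psi$'s \Soo perform the crossing, respectively. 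Thus each step of $\Phi$'s \Stt has an inverse step in $\Psi$'s \Soo, and both machines halt the chain of crossings exactly when the opposite inequality is first violated. This guarantees that after $\Psi$'s iteration the triplet becomes $(\delta^{S+1-v},\gamma^{S+1-v},\mu^{S+1-v})$, completing the induction.

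The main obstacle is keeping track of the position of the active part (the secondary part being pushed up through $\Phi$'s \Stt, and the primary part being pushed down through $\Psi$'s \Soo) and verifying that the chain terminates at the same interface in both directions. The decomposition provided by \Prp{pr1}(1)--(2) and \Prp{pr2}(1)--(2), together with the equivalences of \Lem{lem1}, localizes the argument to a single crossing at a time and reduces this bookkeeping to the matching of stopping conditions, after which \Prp{pr10} follows, and consequently $\Psi\circ\Phi_{|\Od}=\mathrm{Id}_{|\Od}$.
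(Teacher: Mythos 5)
Your high-level strategy—induction on $v$, matching each iteration of $\Psi$ with the corresponding iteration of $\Phi$ in reverse—is the same as the paper's, and the observation that the merge/split are inverse via $\alpha,\beta$ and \eqref{half1} is correct. However, there is a real gap in the crucial step, which you flag as the "main obstacle" but then dismiss too quickly.

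You assert that the two equivalences in \Lem{lem1} "are precisely the dual conditions that govern whether $\Phi$'s \St and $\Psi$'s \So perform the crossing," so that the chains of crossings automatically reverse. That is not what \Lem{lem1} says. Relation \eqref{oe} says $\Phi$ crosses $(l_p,k_q)$ iff the resulting pair $((k+1)_q,(l-1)_p)$ is well-ordered by $\gg$; relation \eqref{eo} says $\Psi$ does \emph{not} cross $((k+1)_q,(l-1)_p)$ iff $l_p\gg\alpha(k_q)$. Neither of these gives $l_p\not\gg k_q\Longrightarrow l_p\gg\alpha(k_q)$, i.e.\ that $\Psi$ will undo every crossing that $\Phi$ performed. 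In fact $l_p\not\gg k_q$ is an upper bound on $l_p$ and $l_p\gg\alpha(k_q)$ is a lower bound, and there is no a priori reason both should hold. In the paper's proof this implication is obtained from \Lem{lem3} (the $1$-different-distance property of $\Eee$), applied to $\gamma^u\in\E\cap\Od\subset\Eee$: by \eqref{ee} and \eqref{aj} one gets $\pi_j\gg\pi_i+i-j-1\succeq\alpha(\pi_i+\pi_{i+1}+i-j-1)$ for every $j$ in the range of crossings, and only then can one invoke \eqref{eo}. This is the key technical input your proposal is missing; without it the claim that "both machines halt the chain of crossings exactly when the opposite inequality is first violated" is unsupported.

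A minor further slip: in the base case $v=1$, $\gamma^1$ and $\gamma^{S+1}$ are not necessarily empty—only $\mu^1=\mu^{S+1}=\emptyset$. Both are equal to the tail of $\Phi(\lambda)$ consisting of the primary parts after its last secondary part, so the triplets still coincide, but not for the reason you give.
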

\section{Description of $\Ee = \Phi(\Od)$ and proof of \Thm{th4}}\label{sct5}
In this section, we set $\Ee=\Phi(\Od)$, and we give an explicit definition of the bridge for a partition $\nu\in \E$ in order to fit with the condition given in \Thm{th4}. 
Note that, by setting $\Ee=\Phi(\Od)$, the mapping $\Phi$ then describes a bijection between $\Od$ and $\Ee$, and $\Psi=\Phi^{-1}$, so that the identity \eqref{eq:final} holds and this implies \Thm{th4}. 
\[\]
\paragraph{\textbf{Enumeration of parts}} Let us consider a partition $\nu = (\nu'_1,\ldots,\nu'_{p+s})$ with $p$ primary parts and $s$ secondary parts. We can thus consider the $p+2s$ primary parts that occur in 
$\nu$ by counting both the upper and lower halves of the secondary parts. We then set
\begin{equation}
 \nu=(\nu_1,\ldots,\nu_{p+2s})
\end{equation}
with $J,I$ and $I+1$ defined to be respectively the sets of indices of the primary parts, the upper and lower halves of secondary parts. The secondary parts of $\nu$ are indeed  
the parts $\nu_i+\nu_{i+1}$ for $i\in I$. This method of enumeration according to the occurrences of the primary parts  was already used by the author in his proof of the generalization of Siladi\'c's theorem \cite{IK19}. We can then retrieve the corresponding indices for the parts $\nu'_k$ with
\begin{align*}
\nu_{j}&= \nu'_{j-|I\cap[1,j)|}\quad\text{for all }j\in J \,,\\
\nu_{i}+\nu_{i+1}&= \nu'_{i-|I\cap[1,i)|} \quad\text{for all }i\in I\,\cdot
\end{align*}
For ease of notation, we set $I=\{i_1<\cdots<i_s\}$ and $J=\{j_1<\cdots<j_p\}$.
We then consider the index set of the \textit{troublesome secondary parts} as defined in \eqref{eq:difdist}, 
\begin{equation}
 \ts(\nu) = \{i\in I: \nu^{-}(i)\triangleright\,\, \nu_i+\nu_{i+1}\nt\nu_{i+2}+\nu_{i+3} \}\,,
\end{equation}
where $\nu^{-}(i)=\nu'_{i-|I\cap[1,i]|}$ is the (primary or secondary) part to the left of $\nu_i+\nu_{i+1}$. We recall that, by \eqref{Ord} and \eqref{Ordd}, we do not have $\nu_i+\nu_{i+1}\triangleright\nu_{i+2}+\nu_{i+3}$ only if the pair of consecutive secondary parts has a pair of colors in $\Ccc$.
\begin{ex}\label{ex51}
We take $\nu=(14_{bd},11_a,10_{ad},9_{bc},8_{ac},3_c,2_{cd},2_{ab})\in \E$ with $ (p,s)=(2,6)$. Our enumeration gives
$$\nu=(\underbrace{7_d,7_d},11_a,\underbrace{5_d,5_a},\underbrace{5_b,4_c},\underbrace{4_c,4_a},3_c,\underbrace{1_d,1_c},\underbrace{1_b,1_a})\,$$ 
\begin{align*}
J=\{3,10\}, \quad I = \{1,4,6,8,11,13\},\quad I+1 = \{2,5,7,9,12,14\}\,,
\end{align*}
and $\ts(\nu) = \{4,11\}$.
\end{ex} 
\bi
We will then define, in the first part of this section, for any $i\in I$, the \textit{Bridge} $\Br(i)\geq i$ as an index in $I\cup J$, and the \textit{bridge} as the part $\nu_{\Br(i)}$ corresponding to this index. This definition will fit with the definition of $\Ee$ given in \Thm{th4}, that we can explicitly state in the following theorem.
\begin{theo}[\textbf{Explicit definition of $\Ee$}]\label{define}
The following are equivalent:
\begin{itemize}
 \item[$(1)$] $\nu \in \Ee = \Phi(\Od)$,
 \item[$(2)$] For any $i\in I$ such that $\Br(i)> i$, we have 
\[\nu^{-}(i)\gg\,\,\nu_{\Br(i)}+\frac{\Br(i)-i}{2}\not\succ \,\,\nu_i+\nu_{i+1}\,,\]
 \item[$(3)$]\textbf{(Necessary and sufficient checks)} For all $i\in \ts(\nu)$ such that $\Br(i)>i$, we have 
 \begin{equation}\label{final}
  \nu_i+\nu_{i+1}\succ \nu_{\Br(i)}+\frac{\Br(i)-i}{2}\,\cdot
 \end{equation}
\end{itemize}
\end{theo}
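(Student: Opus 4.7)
The plan is to prove the three conditions equivalent via the cycle $(1)\Rightarrow(2)\Rightarrow(3)\Rightarrow(1)$. The guiding intuition is that the bridge $\Br(i)$ records, for the secondary part at enumerated index $i$, the position where the crossing phase \St of $\Phi$ would have stopped that part during its rightward journey; the shift $\frac{\Br(i)-i}{2}$ reflects the number of crossings performed, since by Remark \ref{moitmoit} each crossing advances the part by two half-indices while raising its size by one. Condition $(2)$ then asserts that \St indeed stops at index $i$: the left neighbour $\nu^{-}(i)$ already satisfies $\gg$, and the $\not\succ$ relation blocks any further crossing.

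For $(1)\Rightarrow(2)$, I would track each secondary part of $\nu=\Phi(\la)$ through \Prp{pr1}: once a troublesome pair collapses in \So, the resulting secondary part undergoes a run of \St crossings and terminates at enumerated index $i$, producing the value $\nu_i+\nu_{i+1}$. The two termination criteria of \Stt translate via \Lem{lem1} directly into the two inequalities of $(2)$, with $\nu_{\Br(i)}+\frac{\Br(i)-i}{2}$ representing the size the part had at its position of first appearance before the run of crossings began.

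For $(2)\Rightarrow(3)$, observe that $A:=\nu_{\Br(i)}+\frac{\Br(i)-i}{2}$ carries a primary color while $B:=\nu_i+\nu_{i+1}$ carries a secondary color; since primary and secondary color sets are disjoint, a short check from the definition \eqref{lex} shows that $A\not\succ B$ is equivalent to $B\succ A$, and the restriction to $i\in\ts(\nu)$ in $(3)$ is simply a weakening from the larger index set in $(2)$. For $(3)\Rightarrow(1)$, I would apply $\Psi$ to $\nu$ and use \Prp{pr2} together with \Prp{pr10}: condition $(3)$, checked only on troublesome indices, suffices to guarantee that each splitting step in \Stt of $\Psi$ produces a legitimate troublesome primary pair and that the subsequent crossings of \So proceed in accordance with the triplet structure of \Prp{pr2}, so an induction on the number of secondary parts yields $\Psi(\nu)\in\Od$ and therefore $\nu=\Phi(\Psi(\nu))\in\Ee$.

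The main obstacle is defining the bridge $\Br(i)$ explicitly enough to make these arguments rigorous. A single bridge can be read off from a local rightward scan of the parts, but when several troublesome secondary parts occur in succession their bridges chain, so $\Br$ must be set up by a careful downward induction on $i\in I$, using \Lem{lem2} to control how neighbouring secondary parts interact through their halves $\alpha$ and $\beta$. Checking that this inductive definition simultaneously matches the dynamics of $\Phi$ and yields the same set $\Ee$ under all three characterizations is the technical core of \Sct{sct5}.
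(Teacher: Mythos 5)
Your cycle $(1)\Rightarrow(2)\Rightarrow(3)\Rightarrow(1)$ differs from the paper's structure ($(1)\Rightarrow(2)$, $(2)\Rightarrow(1)$ via \Prp{pr6}, and a separate proof that $(2)\Leftrightarrow(3)$), and the difference is where the gap lies. You correctly observe that $(2)\Rightarrow(3)$ is a genuine weakening (and your remark that $A\not\succ B$ is equivalent to $B\succ A$ because the colors are distinct is fine). But in the step $(3)\Rightarrow(1)$ you assert that the check at the troublesome indices $\ts(\nu)$ alone ``suffices to guarantee that each splitting step \ldots produces a legitimate troublesome primary pair.'' This is precisely the non-trivial content of the theorem, not something that follows from running $\Psi$ and invoking \Prp{pr2} and \Prp{pr10}. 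The reversibility of each crossing of $\Psi$ by \St of $\Phi$ requires the inequality $\nu_{\Br(i)}+\frac{\Br(i)-i}{2}\not\succ\nu_i+\nu_{i+1}$ (together with $\nu^{-}(i)\gg\nu_{\Br(i)}+\frac{\Br(i)-i}{2}$) at \emph{every} $i$ with $\Br(i)>i$, not only at $i\in\ts(\nu)$; your induction on the number of secondary parts cannot start without first upgrading $(3)$ to $(2)$.

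That upgrade is where the missing ideas live. One must show the condition propagates from the head of each maximal block of consecutive secondary parts not well-ordered by $\triangleright$ (whose first index is automatically in $\ts(\nu)$) to the rest of the block, using the inequality $\nu_{i'}+\nu_{i'+1}+\frac{i'-i}{2}\succ\nu_i+\nu_{i+1}$ for such blocks (\Lem{lem5}) together with the fact that the bridge is constant along chains with minimal size differences (\Lem{lem6}); and separately one must handle indices $i$ lying in stretches well-ordered by $\triangleright$, by locating the nearest troublesome index in $(i,\Br(i))$ (or showing directly via \eqref{Ord} that $\nu_i+\nu_{i+1}\succ\nu_{\Br(i)}+\frac{\Br(i)-i}{2}$ when no such index exists). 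None of this appears in your sketch, and without it the claim that $(3)$ implies membership in $\Phi(\Od)$ is unsupported. Your $(1)\Rightarrow(2)$ outline, by contrast, matches the paper's argument via \Prp{pr1}, \Prp{pr2}, \Prp{pr5} and \Prp{pr10} and is sound.
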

Recall that if $\nu\in \Eee$, then $\ts(\nu)=\emptyset$ so that $(3)$ is true. We thus retrieve the fact that $\Eee\subset \Ee$.
\\\\In the remainder of this section, we will first give an explicit definition of the bridge, describe its properties and show how to easily compute it. Then, in the next part, we will prove that $(1)$ implies $(2)$. After that, we show that $(2)$ implies $(1)$. 
Finally, we give a proof of the equivalence between $(2)$ and $(3)$.
\subsection{Definition and properties of the Bridge}
 For any $i\in I$, let us consider $j = \min(i,p+2s]\cap J$, if it exists,
which is the index of the greatest primary part to the right of the secondary part $\nu_i+\nu_{i+1}$. Otherwise, there is no primary part to its right, and we set $j=p+2s+1$.
Note that $j-i$ is twice the number of secondary parts ($\nu_i+\nu_{i+1}$ included) between $\nu_i+\nu_{i+1}$ and $\nu_j$, even if we set $\nu_{p+2s+1}=0_{a_n}$.
In any case, we can set $j = \min(i,p+2s+1]\cap( J\cup\{p+2s+1\})$.
\begin{deff} We define the \textit{Bridge} $\Br(i)$ to be as follows :
\begin{itemize}
 \item If  $j$ satisfies 
 \begin{equation}\label{br1}
  \nu_{i'+1}\not \succ \nu_j+\frac{j-i'}{2}-1
 \end{equation}
 for all $i'\in [i,j)\cap I$, we set $\Br(i)=j$. Note that for $j=p+2s+1$, the relation \eqref{br1} is never satisfied for the last secondary part, since its upper and lower halves have size greater than $0$. 
\item Otherwise, we define 
\begin{equation}\label{br2}
 \Sc_i=\{u\in (i,j)\cap I: \nu_{i'+1}\not \succ \nu_u +\frac{u-i'}{2}-1\quad \forall  i'\in [i,u)\cap I\}.
\end{equation}
If $\Sc_i\neq \emptyset$, we then set 
\begin{equation}\label{br3}
 \Br(i)= \max{\Sc_i}\,\cdot 
\end{equation}
Otherwise, we set $\Br(i)=i$.
\end{itemize}
\end{deff}
Here, we observe that $\Br(i)\geq i$, and for $\Br(i)> i$, we have the relation
\begin{equation}\label{cross}
 \nu_{i'+1}\not \succ \nu_{\Br(i)} +\frac{\Br(i)-i'}{2}-1
\end{equation}
for all $i'\in [i,\Br(i))\cap I$. Also note that the function $\Br$ is \textit{local}, as it only depends on the maximal sequence of secondary parts and not on the entire partition $\nu$. 
\begin{rem}
The value $\frac{\Br(i)-i'}{2}$ indeed corresponds to the
difference between the index of the secondary part $\nu'_{i'-|I\cap [1;i')|}$ and the index of the primary or secondary part $\nu'_{\Br(i)-|I\cap [1;\Br(i))|}$, so that the relation \eqref{cross} can be formulated as follows:
the lower half $\nu_{i'+1}$is not $1$-distant-different from $\nu_{\Br(i)}-1$.
\end{rem}
\bi
The definition of brigde as stated above has the sole purpose to  make our results simpler to prove.  
It may seem difficult to compute, but the calculation of the bridge is indeed quite simple as it can be done recursively. In fact, the first hint for the computational method is given by the following lemma.  
\begin{lem}\label{lem4}
The function $\Br$ is non-decreasing on $I$, and for any $i$ such that $\Br(i)\in I$, we have
$\Br(\Br(i))=\Br(i)$.
\end{lem}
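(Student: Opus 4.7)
\emph{Proof plan.} The strategy is to reduce both parts of the lemma to a single algebraic observation. Abbreviate $d(i',k) := \nu_k + \tfrac{k-i'}{2} - 1$ (interpreted as a colored number with the color of $\nu_k$); then every ``for all $i'$'' condition in the definition of $\Br$ takes the uniform form $d(i',k) \succeq \nu_{i'+1}$.

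The key propagation fact I would first establish is: whenever $u \in I$ and $k > u$ satisfy $d(u,k) \succeq \nu_{u+1}$, then $d(i',k) \succ d(i',u)$ for every $i' < u$. One shifts by $1$ to get $\nu_k + \tfrac{k-u}{2} \succeq \nu_{u+1} + 1$, and then applies the \emph{strict} inequality $\nu_{u+1} + 1 \succ \nu_u$. This strict inequality follows from \eqref{ab} and \eqref{lex} by a direct two-case check on the parity of $\nu_u + \nu_{u+1}$: the two colored numbers differ by exactly $1$ and have different colors, so the lex order in \eqref{lex} separates them strictly. Finally, translating both sides by the integer $\tfrac{u-i'}{2}-1$ preserves $\succ$ and gives $d(i',k) \succ d(i',u)$.

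The identity $\Br(\Br(i)) = \Br(i)$ then follows by contradiction. If $u := \Br(i) \in I$ with $u > i$, suppose $k := \Br(u) > u$. Whether $k = j$ (first case) or $k \in \Sc_u$, the definition of $\Br(u)$ yields $d(u'',k) \succeq \nu_{u''+1}$ for all $u'' \in [u,k) \cap I$; taking $u'' = u$ gives the hypothesis $d(u,k) \succeq \nu_{u+1}$ of the propagation fact. For every $i' \in [i,u) \cap I$, combining the propagation fact with the defining property $d(i',u) \succeq \nu_{i'+1}$ of $u \in \Sc_i$ gives $d(i',k) \succ d(i',u) \succeq \nu_{i'+1}$. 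Together these show the uniform condition $d(i',k) \succeq \nu_{i'+1}$ holds on all of $[i,k) \cap I$, so $k$ qualifies for $\Sc_i$ or triggers the first case for $i$; either way $\Br(i) \geq k > u$, contradicting $\Br(i) = u$. The case $\Br(i) = i$ is trivial.

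For monotonicity, let $i_1 < i_2$ both belong to $I$. If $i_2$ lies beyond the primary marker $j_1$ associated to $i_1$, then $\Br(i_1) \leq j_1 \leq i_2 \leq \Br(i_2)$. Otherwise $j_1 = j_2$, and whenever $i_2 \leq \Br(i_1)$ the ``for all $i' \in [i_1, \Br(i_1)) \cap I$'' condition restricts directly to the same condition on $[i_2, \Br(i_1)) \cap I$, so $\Br(i_1)$ is itself a valid witness for $\Br(i_2)$ and $\Br(i_2) \geq \Br(i_1)$; if instead $i_2 > \Br(i_1)$ then $\Br(i_2) \geq i_2 > \Br(i_1)$ automatically. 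The main (if modest) obstacle is the strict inequality $\nu_{u+1} + 1 \succ \nu_u$, which supplies the strict gain driving the contradiction argument; once it is in hand, both conclusions reduce to formal manipulation of the uniform condition $d(i',k) \succeq \nu_{i'+1}$.
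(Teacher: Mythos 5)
Your proposal is correct and takes essentially the same route as the paper's proof: the strict inequality $\nu_{u+1}+1\succ\nu_u$ between the lower and upper halves of a secondary part (which the paper also invokes, as these halves are consecutive for $\succ$) is exactly what lets the bridge condition at $\Br(i)$ propagate by transitivity down to all $i'\in[i,\Br(i))\cap I$, yielding the contradiction $\Br(i)\geq\Br(\Br(i))>\Br(i)$. The monotonicity argument by restricting the ``for all $i'$'' condition to subintervals likewise mirrors the paper's case analysis.
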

\Lem{lem4} allows us to state that for any $i\in I$, $\Br(i)$ is either the index of the greastest primary part to the right of $\nu_i+\nu_{i+1}$, or the smallest fixed point (by $\Br$) to its right.
This fact leads to the following proposition, which gives us the second and final hint for the computation of $\Br$.
\begin{prop}[\textbf{Crossing rules for $\Psi$}]\label{pr5}
By applying $\Psi$ on $\nu = (\nu_1,\ldots, \nu_{p+2s})$, we have that the secondary part $\nu_i+\nu_{i+1}$:
 \begin{itemize}
  \item does not cross any primary part if and only if $\Br(i)=i$,
  \item otherwise, for $i_u=i<\Br(i)$, it first crosses the primary part that comes from $\nu_{\Br(i)}$: 
  \begin{equation}
  g(\gamma^{s+1-u})=\nu_{\Br(i_u)} +\frac{\Br(i_u)-i_u}{2}-1 \,\cdot
  \end{equation}
 \end{itemize}
\end{prop}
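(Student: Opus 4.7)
The plan is to prove both items by descending induction on $u\in\{1,\ldots,s\}$, the index of the secondary part being processed by $\Psi$. For the base case $u=s$, no earlier processing has occurred, so the primary to the right of the $s$-th secondary (if any) is the original $\nu_{i_s+2}$, and a direct computation of $\Br(i_s)$ matches the assertion: either $\Br(i_s)=i_s+2$ and the crossing is triggered by $\beta(\nu_{i_s}+\nu_{i_s+1})=\nu_{i_s+1}\not\succ\nu_{i_s+2}$, or $\Br(i_s)=i_s$ and the crossing test fails.

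The core of the argument is the inductive step, for which I will establish the following sub-claim: just before $\Psi$ processes the $u$-th secondary, the primary immediately to its right is $\nu_{\Br(i_u)}+\tfrac{\Br(i_u)-i_u}{2}-1$ if $\Br(i_u)>i_u$, and otherwise has a value that falsifies the crossing test. Once the sub-claim is proven, the bridge relation \eqref{cross} specialised to $i'=i_u$ is precisely the Step~$1$ crossing condition of $\Psi$, yielding both items.

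To prove the sub-claim I will split into cases using \Lem{lem4} (namely that $\Br$ is non-decreasing on $I$ and that $\Br(\Br(i))=\Br(i)$ whenever $\Br(i)\in I$). If $\Br(i_u)=j\in J$, writing $m=\tfrac{j-i_u}{2}-1$ for the number of secondaries strictly between $i_u$ and $j$, the bridge inequalities $\nu_{i_{u+\ell}+1}\not\succ\nu_j+(m-\ell)$ for $\ell=1,\ldots,m$ say exactly that each intermediate secondary, at its turn, encounters and crosses the appropriately incremented copy of $\nu_j$, so that after the $(u+1)$-th through $(u+m)$-th secondaries are processed, $\nu_j$ has accumulated $m$ increments and equals $\nu_j+m$. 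If $\Br(i_u)=i_{u+k}\in I$, then \Lem{lem4} combined with monotonicity forces $\Br(i_{u+\ell})=i_{u+k}$ for every $\ell\in\{0,\ldots,k\}$; in particular $\Br(i_{u+k})=i_{u+k}$, so the $(u+k)$-th secondary splits in place, and by the inductive hypothesis applied in turn to the $(u+k-1)$-th, $(u+k-2)$-th, $\ldots$, $(u+1)$-th secondaries, the upper half $\nu_{i_{u+k}}$ is crossed and incremented $k-1$ times. Both computations yield the predicted value. The remaining case $\Br(i_u)=i_u$, which occurs exactly when $\Sc_{i_u}$ is empty and \eqref{br1} fails, is handled by a parallel analysis showing that the first primary on the right has a value succeeded by $\nu_{i_u+1}$, so no crossing happens.

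The main obstacle is that the inductive hypothesis records only the \emph{first} primary crossed by each later secondary, whereas the sub-claim needs us to identify which primary is leftmost on the right of $i_u$ in the current state. To rule out that some spurious primary, for instance an original $\nu_{j'}$ with $i_u<j'<\Br(i_u)$, or a split half produced along the way, ends up interposed before the expected one, I will exploit the maximality of $\Br(i_u)$ in its definition together with the chain structure from \Lem{lem4}, which together force the intermediate secondaries to cross precisely the primary descended from $\nu_{\Br(i_u)}$. Working through this bookkeeping, rather than the case analysis itself, is the technically delicate part.
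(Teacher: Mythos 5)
Your strategy --- a right-to-left (descending-$u$) induction, using \Lem{lem4} to decompose $(i_u,\Br(i_u)]$ into a chain along which every index has the same Bridge, and matching the inequalities \eqref{br1} and \eqref{cross} against the crossing test of Step 1 of $\Psi$ --- is the same as the paper's, which phrases the same induction as a recursive descent through the fixed points $i'_1<\cdots<i'_t$ of $\Br$ in $[i,j)$. Your base case and the arithmetic of the increments (the $m-\ell$ shifts, the $k-1$ crossings of the upper half $\nu_{i_{u+k}}$) are correct.

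The gap is exactly the part you defer. Your sub-claim identifies the \emph{leftmost} primary to the right of the $u$-th secondary at its turn, but the inductive hypothesis for $u+1,\ldots,s$ only records the \emph{first} primary each of those secondaries crosses; an intermediate secondary may cross several primaries before splitting, and its split creates two further primaries, so you must rule out that any of these ends up interposed before the descendant of $\nu_{\Br(i_u)}$. The tools you name for this (maximality of $\Br(i_u)$ in \eqref{br3} and the chain structure of \Lem{lem4}) control which index the Bridge designates, but say nothing about positions in the evolving sequence. What is actually needed --- and what the paper states in the opening lines of its proof --- are two elementary invariants of $\Psi$: primary parts never cross one another, so their relative order is preserved throughout the process; and when a secondary finishes crossing and splits, all the primaries it crossed lie to the left of its two halves, with the first-crossed one leftmost. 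These give at once that the leftmost primary seen by the $u$-th secondary is the once-more-incremented first primary crossed by the $(u+1)$-th secondary if that one crossed anything, and its upper half $\nu_{i_{u+1}}$ otherwise (unless some original $\nu_{j'}$ with $j'\in J$ lies between $i_u$ and $i_{u+1}$, in which case $\nu_{j'}$ is leftmost and $\Br(i_u)\in\{i_u,j'\}$). With these invariants supplied, your case analysis closes; without them the ``bookkeeping'' you postpone is precisely the content of the proposition. A smaller point: your phrase that the intermediate secondaries ``cross precisely the primary descended from $\nu_{\Br(i_u)}$'' is only true of their \emph{first} crossing --- they may cross further primaries afterwards --- so it should be stated that way to avoid proving something false.
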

The relevance of this proposition consists in saying that, during $\Psi$, the fixed points are the indices of the secondary parts which split directly with no application of \Soo, and if a fixed point $i=\Br(i)$ is found, then the next fixed point to its left is the index of the smallest secondary part which is not crossed by the upper half $\nu_i$ during iterations of \Soo. 
\\Note that, by definition, the \textit{bridges} are exactly the parts $\nu_i$ for the fixed points $i$, along with the primary parts $\nu_j$ after the tail of a sequence of secondary parts.  The key idea to compute the bridge is then to retrieve the fixed points by performing iterations of \So with the bridges $\nu_j$ and $\nu_i$. 
\bi
\paragraph{\textbf{Method to compute $\Br$}} The function $\Br$ being local, we then consider a maximal sequence of secondary parts, with the ending primary part to its right. The reasoning will be the same when we do not have a primary part at the tail of the sequence. Without loss of generality, we can restrict the partition $\nu$ to such sequence: $\nu  = (\nu_1,\ldots,\nu_{2s+1})$ with  
\[\nu_1+\nu_2\,\,\,\gg\nu_3+\nu_4\,\gg\,\cdots\,\,\gg\,\, \nu_{2s-1}+\nu_{2s}\gg\,\, \nu_{2s+1}\,\cdot\]
For simplicity, we show the computation on the following example. We take the set of primary colors $\C=\{a<b<c<d<e<f\}$ and the partition
\[\nu = (20_{ef},20_{ad},19_{bc},16_{de},14_{af},11_{ad},6_c)\,,\]
or rewritten with our enumeration 
\[\nu = (\underbrace{10_f,10_e}_{i=1}\,,\,\underbrace{10_d,10_a}_{i=3}\,,\,\underbrace{10_b,9_c}_{i=5}\,,\,\underbrace{8_e,8_d}_{i=7}\,,\,\underbrace{7_f,7_a}_{i=9}\,,\, \underbrace{6_a,5_d}_{i=11}\,,\,\underbrace{6_c}_{j=13})\,\cdot\]
Recall that to perform \So of $\Psi$, we always compare a primary part to the lower half of a secondary part. We then proceed as follows:
\begin{enumerate}
\item We start with the sequence 
$$(\beta_1,\beta_2,\cdots,\beta_{s},\alpha_{s+1})=(\nu_2,\nu_4,\cdots,\nu_{2s},\nu_{2s+1})$$ consisting of  the lower halves and the primary part. Our example gives the sequence
\[(\underbrace{10_e,10_a,9_c,8_d,7_a,5_d}_{\beta_u , u=1,\ldots,6},\underbrace{6_c}_{\alpha_7})\,\cdot\]
The first fixed point (starting from the right) corresponds to the first $\beta_{u}$ which is $1$-different-distant from $\alpha_{s+1}-1$ in the order $\succ$. We then have  $i_1=2u_1-1$ if such $u_1$ exists. If there is no such $u_1$, it means that $j$ is the Bridge of all $i\in 2\sss-1$.
With our example, we just have to compare the two sequences
\begin{align*}
(10_e,10_a,9_c,\underline{8_d},7_a,5_d)\\
(11_c,10_c,9_c,8_c,7_c,6_c)
\end{align*}
starting from the right, and we identify the first fixed point, $i_1=2u_1-1=7$, corresponding to the underlined lower half.
\item We redo the same process for the sequence 
$$(\beta_1,\beta_2,\cdots,\beta_{u_1-1},\alpha_{u_1})=(\nu_2,\nu_4,\cdots,\nu_{i_1-1},\nu_{i_1})\,,$$
where $\beta_u$ are the lower halves of the $(u_1-1)$ first secondary parts, and $\alpha_{u_1}$ is the upper half the $u_1^{th}$ secondary part, which corresponds to the first Bridge. Our example gives the sequence 
$(\underbrace{10_e,10_a,9_c}_{\beta_{1,2,3}},8_e)$ and 
the sequence comparison 
\begin{align*}
(10_e,10_a,\underline{9_c})\\
(10_e,9_e,8_e)
\end{align*}
and the second fixed point is $i_2=2u_2-1=5$.
\item Following the same process, we apply the comparisons for the sequence 
$$(\beta_1,\beta_2,\cdots,\beta_{u_k-1},\alpha_{u_k})=(\nu_2,\nu_4,\cdots,\nu_{i_k-1},\nu_{i_k})\,,$$
in order to retrieve the $(k+1)^{th}$ fixed point. Here again, we have $i_k=2u_k-1$. If there is no $\beta_{u}$ which is $1$-different-distant from $\alpha_{u_k}-1$ in the order $\succ$, we  stop the process, as $i_k$ is the last fixed point and becomes the Bridge of the remaining $i<i_k$. In our example the last fixed point is indeed $i_2$, since 
we have the sequence  $(\underbrace{10_e,10_a}_{\beta_{1,2}},10_b)$ and the sequence comparison
\begin{align*}
(10_e,10_a)\\
(11_b,10_b)\cdot
\end{align*}
\end{enumerate}
Note that applying this computation requires in fact $s$ comparisons, starting from the right to the left, to retrieve all of the fixed points, but computing the precise bridge for an $i$ will require as many comparisons as the number of secondary parts to its right.
For our example, we sum up the computation of the Bridge with the following table.
\begin{equation}
\begin{array}{|c|c|c|c|c|c|c|}
\hline
 i&1&3&5&7&9&11\\
 \hline
 \Br(i)&5&5&5&7&13&13\\
 \hline
\end{array} \,\cdot            
\end{equation} 
\bi By condition $(3)$ of \Thm{define}, to see if $\nu \in \Ee$, we only need to check the secondary part $20_{ef}$, whose bridge corresponds to $10_b$, and we have $20_{ef}\succ 10_b+2$. We then have $\nu\in \Ee$. One can check that 
\[\Psi(\nu) = (12_b,11_a,9_f,9_e,9_d,9_c,8_e,8_d,8_c,7_a,6_f,5_d,5_a)\,,\]
and that $\Phi(\Psi(\nu))=\nu$.
\bi  For the case where the sequence $\nu=(\nu_1,\ldots,\nu_{2s})$ does not end by a primary part, the first splitting occurs at the right most secondary part, and we set the first fixed point $i_1=2u_1-1=2s-1$. We then start the process at step $(2)$ and the remainder of the computation of the bridges is the same.
\subsection{Proof that $(1)$ implies $(2)$}
We suppose that $i=i_{s+1-v}$ for some $v\in [1,s]$. Then by the \Prp{pr5} and \Prp{pr2}, $\nu_i+\nu_{i+1} = s(\delta^{v})$ and $g(\gamma^v)=\nu_{\Br(i)} +\frac{\Br(i)-i}{2}-1$.
After crossing, the primary part becomes $\nu_{\Br(i)} +\frac{\Br(i)-i}{2}$ and the secondary part becomes $\nu_i+\nu_{i+1}-1$. 
But, by \Prp{pr10}, the crossing is the reverse crossing of \St in process $\Phi$, so that we have 
\[\nu_{\Br(i)} +\frac{\Br(i)-i}{2} \not \gg \nu_i+\nu_{i+1}-1 \Longleftrightarrow \nu_{\Br(i)} +\frac{\Br(i)-i}{2}\not \succ \nu_i+\nu_{i+1}\,\cdot\]
Also, note that the sequence 
\[\delta^v \setminus \{\nu_i+\nu_{i+1}\}\quad ,\quad \nu_{\Br(i)} +\frac{\Br(i)-i}{2} \]
is indeed the head of the sequence $\delta^{v+1},\gamma^{v+1}$, which is a partition in $\E$ by \Prp{pr10}. In fact, this pair of sequences corresponds to the same pair in \Prp{pr1} for $u=s-v$, and is a pair in $\E\times (\E\cap \Od)$
satisfying $s(\delta^u)\gg g(\gamma^u)$. We then deduce that the part $\nu^-(i)$ to the left $\nu_i+\nu_{i+1}$ is well-ordered with $\nu_{\Br(i)} +\frac{\Br(i)-i}{2}$ in terms of $\gg$, so that 
\[\nu^-(i)\gg \nu_{\Br(i)} +\frac{\Br(i)-i}{2}\,\cdot\]
With this, we have proved that $(1)$ implies $(2)$ in \Thm{define}.
\subsection{Proof that $(2)$ implies $(1)$} 
We prove that $(2)$ implies $(1)$ with the following proposition.
\begin{prop}\label{pr6}
If $\nu$ satisfies condition $(2)$ in \Thm{define}, then in \Prp{pr2}, the triplet $(\delta^v,\gamma^v,\mu^v)$ satisfies the following properties:
\begin{enumerate}
\item $(\gamma^v,\mu^v)\in (\E\cap\Od)\times \Od$,
\item $s(\delta^v)\gg g(\gamma^v)$.
\item If we apply \So once and  some iterations of \St of the process $\Phi$   on the sequence $\delta^{v+1},\gamma^{v+1},\mu^{v+1}$, we obtain the sequence $\delta^v,\gamma^v,\mu^v$
\end{enumerate} 
\end{prop}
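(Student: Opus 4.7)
The plan is to proceed by induction on $v$ from $1$ up to $S$. For the base case $v=1$, the triplet is $(\delta^1,\gamma^1,\emptyset)$ where $\gamma^1$ is, by the remark following \Prp{pr2}, the tail of $\nu$ obtained after its last secondary part. Since $\nu\in\E$, these trailing parts are primary parts well-ordered by $\gg$, so $\gamma^1\in\E\cap\Od$ and $\mu^1=\emptyset\in\Od$ trivially; the relation $s(\delta^1)\gg g(\gamma^1)$ is immediate from $\nu\in\E$; and item $(3)$ is vacuous. This part should be essentially definitional.

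For the inductive step, I will assume the three conditions hold at state $v$ and analyze one full iteration of $\Psi$, namely the \So-crossings of $s(\delta^v)$ against the leading primary parts of $\gamma^v$, followed by a single \St-split. Let $i$ be the index of $s(\delta^v)$ in the refined enumeration of $\nu$. By \Prp{pr5}, the first primary part crossed is exactly the bridge $\nu_{\Br(i)}+\frac{\Br(i)-i}{2}-1$, and the crossings stop at this bridge; the post-crossing value of the secondary part is then determined, after which the \St-split produces the pair $(s(\gamma^{v+1}),g(\mu^{v+1}))$. Condition $(2)$ of \Thm{define} supplies the two crucial inequalities: $\nu^{-}(i)\gg \nu_{\Br(i)}+\frac{\Br(i)-i}{2}$ yields $s(\delta^{v+1})\gg g(\gamma^{v+1})$, giving item $(2)$, while $\nu_{\Br(i)}+\frac{\Br(i)-i}{2}\not\succ \nu_i+\nu_{i+1}$ combined with \eqref{abba} ensures that the \St-split yields a genuine troublesome pair, which is exactly what \Prp{pr2} requires. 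For item $(1)$, I will use \Lem{lem1} and \Lem{lem2} together with \Rem{moitmoit} to check that each crossed primary part, after being incremented by one, remains well-ordered by $\gg$ in $\gamma^{v+1}$, and that the non-crossed tail of $\gamma^v$, glued to the second split primary, forms a sequence $\mu^{v+1}\in\Od$ well-ordered by $\succ$.

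For item $(3)$, I would reverse the process: starting from $(\delta^{v+1},\gamma^{v+1},\mu^{v+1})$, apply $\Phi$'s \So once to recombine the troublesome pair into the reduced secondary part, then apply $\Phi$'s \St exactly $r$ times, where $r$ is the number of \So-crossings that $\Psi$ had performed. Each $\Phi$-\St crossing is the set-theoretic inverse of the corresponding $\Psi$-\So crossing under the substitution $(k_p,l_q)\looparrowright (l+1_q,k-1_p)$, so to conclude we only need to know that $\Phi$-\St does fire at each of the $r$ steps and does not terminate early; this will be verified using equivalence \eqref{oe} of \Lem{lem1} applied to the $r$ incremented primary parts. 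After exactly $r$ iterations we recover $(\delta^v,\gamma^v,\mu^v)$.

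I expect the main obstacle, and the essential use of condition $(2)$, to appear in item $(1)$: namely, verifying that all the primary parts produced by the $\Psi$-\So-crossings remain pairwise well-ordered by $\gg$ inside $\gamma^{v+1}$, especially at the junction between the last incremented crossed part and the first split primary $s(\gamma^{v+1})=\alpha$ of the reduced secondary. I anticipate that this reduces to a short application of \Lem{lem2} combined with the characterization of $\Br(i)$ given by \Prp{pr5} as the smallest fixed point of $\Br$ strictly to the right of $i$, together with the non-decreasing property of $\Br$ from \Lem{lem4}.
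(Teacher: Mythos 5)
Your overall strategy matches the paper's: induction on $v$, identification of $g(\gamma^v)$ with the bridge via \Prp{pr5}, extraction from condition $(2)$ of \Thm{define} of the two inequalities $\nu^{-}(i)\gg \nu_{\Br(i)}+\frac{\Br(i)-i}{2}$ (giving $s(\delta^{v+1})\gg g(\gamma^{v+1})$) and $\nu_{\Br(i)}+\frac{\Br(i)-i}{2}\not\succ\nu_i+\nu_{i+1}$ (giving the reversibility of the first crossing), and inversion of the remaining crossings through \eqref{oe} and \eqref{eo} of \Lem{lem1}. One point you leave implicit: the paper gets the reversibility of \emph{all} intermediate crossings by observing that $s(\delta^v)\gg g(\gamma^v)$ together with $\gamma^v\in\E\cap\Od\subset\Eee$ puts the sequence $(s(\delta^v),\gamma^v)$ in $\Eee$, so \Lem{lem3} applies; your appeal to \Lem{lem2} and Remark 2.1 would have to be replaced or supplemented by this.

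The genuine gap is that your inductive step only treats the case $\Br(i)>i$. When $\Br(i)=i$, the secondary part $s(\delta^v)=\nu_i+\nu_{i+1}$ splits immediately with no crossing, and condition $(2)$ of \Thm{define} is silent, since it is only imposed for $\Br(i)>i$. Yet you must still prove item $(2)$ at the next level, namely $s(\delta^{v+1})=\nu^{-}(i)\gg\nu_i=g(\gamma^{v+1})$: that the upper half of the split part is well-ordered with the part to its left, which is also what keeps $\delta^{v+1},\gamma^{v+1}$ in $\E$. If $\nu^{-}(i)\triangleright\nu_i+\nu_{i+1}$ a one-line size estimate suffices, but if $\nu^{-}(i)\nt\nu_i+\nu_{i+1}$ — a special pair in $\Ccc$, consecutive for $\succ$, of the form $(k_{cd},k_{ab})$ or $((k+1)_{ad},k_{bc})$ — nothing in your outline covers it. The paper settles exactly this subcase by an explicit parity check ($(2k)_{cd}\gg k_b$, $(2k+1)_{cd}\gg(k+1)_a$, $(2k+1)_{ad}\gg k_c$, $(2k+2)_{ad}\gg(k+1)_b$). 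Without this case the induction does not close, so you need to add it.
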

\Prp{pr6} says that, for any $\nu\in \E$ that satisfies $(2)$ of \Thm{define}, we have that $\Psi(\nu)\in \Od$, since the last sequence $\delta^{S+1},\gamma^{S+1},\mu^{S+1}$ is such that $\delta^{S+1} =\emptyset$
and $(s(\gamma^v),g(\mu^v))$ is a troublesome pair so that $s(\gamma^v)\succ g(\mu^v)$.
The fact that all the crossings and the splitting of $\Psi$ are reversible by $\Phi$ means that the process $\Psi$ on $\nu$ is reversible by $\Phi$,
and we then have $\Ee\ni\Phi(\Psi(\nu))=\nu$.
\subsection{Proof of the equivalence between $(2)$ and $(3)$}
In this part, we will show that it sufficient to satisfy the condition $(2)$ only on $\ts(\nu)$. In fact, condition $(2)$ of \Thm{define} implies that \eqref{final} is true on $\ts(\nu)$, so that 
$(2)$ implies $(3)$.
To prove that $(3)$ implies $(2)$, we will use the following lemmas.
\begin{lem}\label{lem5}
 Let us consider some consecutive secondary parts $\nu_i+\nu_{i+1}\gg \cdots\gg\nu_{i'}+\nu_{i'+1}$ such that 
\[
  \nu_i+\nu_{i+1}\nt\cdots\nt\nu_{i'}+\nu_{i'+1}\,\cdot
\]
 We then have that 
 \begin{equation}\label{X}
  \nu_{i'}+\nu_{i'+1}+ \frac{i'-i}{2}\succ \nu_i+\nu_{i+1}\,\cdot
 \end{equation}
\end{lem}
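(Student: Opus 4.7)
My plan is to induct on $r := (i'-i)/2$, the number of consecutive non-$\triangleright$ pairs in the chain. Throughout, for $0\leq k\leq r$ I write $p_k\in \Cc$ for the color and $m_k$ for the size of $\nu_{i+2k}+\nu_{i+2k+1}$, so that the chain becomes $(m_0)_{p_0}\gg(m_1)_{p_1}\gg\cdots\gg(m_r)_{p_r}$ with $(m_k)_{p_k}\not\triangleright (m_{k+1})_{p_{k+1}}$ for every $k$.

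For the base case $r=1$, I unpack the single pair directly. Since both colors are secondary, \eqref{Ord} says $\triangleright$ requires $\succ$ against the shifted part, so $\not\triangleright$ translates to $(m_1+1)_{p_1}\succeq (m_0)_{p_0}$. Next, $\gg$ combined with $\not\triangleright$ forces $(p_0,p_1)\in \Ccc$, because for $(p_0,p_1)\in \Cc^2\setminus \Ccc$ the two clauses of \eqref{Ord} and \eqref{Ordd} for secondary-secondary pairs coincide. The $\Ccc$-clause of \eqref{Ordd} then unfolds $\gg$ into the strict relation $(m_0)_{p_0}\succ (m_1)_{p_1}$. Converting both inequalities via $k_p\succ l_q\Longleftrightarrow k-l\geq \chi(q>p)$ pins down $m_0-m_1=\chi(p_1>p_0)\in\{0,1\}$ exactly, and a rewrite yields the desired strict conclusion $(m_1+1)_{p_1}\succ (m_0)_{p_0}$.

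For the inductive step, I apply the base case to the terminal pair $\nu_{i'-2}+\nu_{i'-1}\gg \nu_{i'}+\nu_{i'+1}$, obtaining $\nu_{i'}+\nu_{i'+1}+1 \succ \nu_{i'-2}+\nu_{i'-1}$. Since adding the same integer to both sides preserves $\succ$ (immediate from the $\chi$-criterion above), shifting by $r-1$ gives
\[
\nu_{i'}+\nu_{i'+1}+r \;\succ\; \nu_{i'-2}+\nu_{i'-1}+(r-1)\,\cdot
\]
The inductive hypothesis applied to the initial sub-chain (still entirely non-$\triangleright$) yields $\nu_{i'-2}+\nu_{i'-1}+(r-1)\succ \nu_i+\nu_{i+1}$, and transitivity of $\succ$ then concludes.

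The delicate point is the base case: the strict $\gg$ combined with the non-strict failure $\not\triangleright$ pins the size gap $m_0-m_1$ exactly to $\chi(p_1>p_0)$, and it is this exactness, rather than the $\succeq$ given by $\not\triangleright$ alone, that promotes the final relation to the strict $\succ$ required by the statement. Everything after is translation invariance of the colored order and transitivity of $\succ$.
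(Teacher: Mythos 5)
Your proof is correct and follows essentially the same route as the paper's: you show that each consecutive pair, being forced into $\Ccc$ by the coexistence of $\gg$ and $\not\triangleright$, satisfies $\nu_{u+2}+\nu_{u+3}+1\succ\nu_{u}+\nu_{u+1}$, and then chain these bounds using translation-invariance of $\succ$, exactly as the paper does recursively; the only cosmetic difference is that the paper gets strictness from the fact that special pairs have distinct colors, while you get it by pinning the size gap to $\chi(p_1>p_0)$ exactly. (One small slip: the criterion from \eqref{lex} is $k-l\geq\chi(p\leq q)$, not $\chi(q>p)$, though the two coincide for the distinct colors arising here.)
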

\begin{lem}\label{lem6}
 Let us consider some consecutive secondary parts $\nu_i+\nu_{i+1}\gg \cdots\gg\nu_{i'}+\nu_{i'+1}$ such that 
the size differences between consecutive parts are minimal. If $\Br(i')>i'$, then $\Br(i)=\Br(i')$.
\end{lem}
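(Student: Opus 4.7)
The plan is to split the desired equality $\Br(i)=\Br(i')$ into two inequalities. The easier direction, $\Br(i)\leq \Br(i')$, follows directly from monotonicity of $\Br$ established in \Lem{lem4}, so the work is to show the reverse inequality $\Br(i)\geq \Br(i')$. Setting $B:=\Br(i')>i'$, the definition of the Bridge provides
\[\nu_{i''+1}\not\succ \nu_B+\tfrac{B-i''}{2}-1\qquad\text{for all }i''\in[i',B)\cap I,\]
and I will verify that the same inequality persists on the extended range $[i,B)\cap I$. Once verified, the fact that $i$ and $i'$ lie in the same consecutive secondary chain guarantees that they share the same primary-boundary index $j$ in the definition of $\Br$, so $B$ lands in $\Sc_i\cup\{j\}$; by maximality in the definition this forces $\Br(i)\geq B$, completing the chain of inequalities.

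The whole extension reduces to the following size estimate on lower halves: for every $i''\in [i,i']\cap I$,
\[\nu_{i''+1}\;\preceq\;\nu_{i'+1}+\tfrac{i'-i''}{2}.\]
I will prove this by backward induction on $i''$, starting from the trivial base case $i''=i'$. For the step from $i''+2$ down to $i''$, the two consecutive secondary parts $\nu_{i''}+\nu_{i''+1}$ (color $p$) and $\nu_{i''+2}+\nu_{i''+3}$ (color $q$) have minimal size difference $\Delta(p,q)$ by hypothesis, so the equality clause of \Lem{lem2}, namely \eqref{chaine}, yields $\nu_{i''+1}\preceq \nu_{i''+3}+1$; composing with the inductive hypothesis applied to $\nu_{i''+3}$ gives the required bound. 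Combining this estimate with the base relation $\nu_{i'+1}\preceq \nu_B+\tfrac{B-i'}{2}-1$ by adding $\tfrac{i'-i''}{2}$ to both sides of the latter produces exactly $\nu_{i''+1}\preceq \nu_B+\tfrac{B-i''}{2}-1$, as needed.

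The main difficulty I anticipate is pure bookkeeping rather than a genuine mathematical obstacle: one must confirm that translating both sides of a $\preceq$ relation by the same integer preserves the relation regardless of the colors (immediate from $k_p\succ l_q\iff k-l\geq \chi(p\leq q)$), and one must handle uniformly the subcases $B\in I$ and $B\in J$ (the defining inequality of the Bridge has the same shape in both). Thus the proof is essentially a propagation statement: the minimality assumption is precisely what allows \Lem{lem2} to be iterated cleanly along the chain, transporting the Bridge condition from $i'$ back to $i$.
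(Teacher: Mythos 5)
Your proposal is correct and follows essentially the same route as the paper: both arguments use the equality clause \eqref{chaine} of \Lem{lem2} to propagate the bound $\nu_{u+3}+1\succeq\nu_{u+1}$ backward along the minimal chain, yielding $\nu_{u+1}\preceq\nu_{i'+1}+\frac{i'-u}{2}$, then compose with the Bridge inequality \eqref{cross} at $i'$ to extend the defining condition of $\Br(i')$ to all of $[i,\Br(i'))\cap I$, and finally invoke the definition of the Bridge together with the monotonicity of $\Br$ from \Lem{lem4} to conclude equality. The only differences are presentational (explicitly splitting into two inequalities and running the induction backward rather than forward).
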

\begin{proof}[Proof that $(3)$ implies $(2)$]
 Let us consider a maximal sequence of consecutive secondary parts $\nu_i+\nu_{i+1}\gg \cdots\gg\nu_{i'}+\nu_{i'+1}$ with
 \[
  \nu_i+\nu_{i+1}\nt\cdots\nt\nu_{i'}+\nu_{i'+1}\,\cdot
 \] We then have that the extremal parts are well-ordered in terms of $\triangleright$ with the parts to the left and to the right of the sequence, and  
 we have the inequality
 \begin{equation}\label{seq}
 \cdots \triangleright\nu_i+\nu_{i+1}\nt\cdots\nt\nu_{i'}+\nu_{i'+1}\triangleright\cdots
 \end{equation}
In particular, $i\in \ts(\nu)$. Now, let us consider the set 
\[
 \{u\in [i,i']\cap I: \Br(u)>u\}\,\cdot
\]
If it is empty, then any $u\in [i,i']\cap I$  is a fixed-point of $\Br$. Otherwise, by \Lem{lem6}, it has the form $[i,u]\cap I$ and $\Br$ is the identity on $(u,i']\cap I$.
Furthermore, $\Br(i)=\Br(u')>u'$ for all $u'\in [i,u]\cap I$.
\\\\If we assume that  
\[
 \nu_i+\nu_{i+1} \succ \nu_{\Br(i)}+\frac{\Br(i)-i}{2}\,,
\]
 by \eqref{X}, we then have for all $u'\in [i,u]\cap I $
\[
 \nu_{u'}+\nu_{u'+1} \succ \nu_{\Br(u')}+\frac{\Br(u')-u'}{2} \Longleftrightarrow  \nu_{\Br(u')}+\frac{\Br(u')-u'}{2}\not\succ \,\,\nu_{u'}+\nu_{u'+1} \,\cdot
\]
In addition, by \eqref{Ordd}, we obtain,  for all $u'\in (i,u]\cap I$, that we also have $u'-2\in [i,u)\cap I$ and have $\Br(u'-2)=\Br(u')$, and the following
\[\nu_{u'-2}+\nu_{u'-1}\succ \nu_{\Br(u'-2)}+\frac{\Br(u'-2)-u'+2}{2}\quad
\Longleftrightarrow\quad  \nu_{u'-2}+\nu_{u'-1}\gg \nu_{\Br(u')}+\frac{\Br(u')-u'}{2}\,,\]
so that the condition $(2)$ is also satisfied. Note that condition $(2)$ is also satisfied in $i$, since we have by definition \eqref{Ord}
\begin{align*}
\nu^{-}(i) \triangleright \nu_i+\nu_{i+1} \succ \nu_{\Br(i)}+\frac{\Br(i)-i}{2} &\Longrightarrow \nu^{-}(i)\triangleright \nu_{\Br(i)}+\frac{\Br(i)-i}{2}\not \succ \nu_i+\nu_{i+1}\\
&\Longrightarrow \nu^{-}(i)\gg \nu_{\Br(i)}+\frac{\Br(i)-i}{2}\not \succ \nu_i+\nu_{i+1}\,\cdot
\end{align*}
We then have proved that the condition $(2)$ is satisfied for any element of $I$ in a sequence of the form \eqref{seq}. 
\bi
Now let us take $i\in I$ such that $i$ is not in a sequence of the form \eqref{seq}. This is equivalent to saying that $\nu_i+\nu_{i+1} $ 
is well-ordered to its left and to its right in terms of $\triangleright$, so that
\[
 \cdots \triangleright \nu_i+\nu_{i+1} \triangleright \cdots \cdot
\]
We can then see by \eqref{Ord} that, for $\Br(i)>i$,  
\begin{align*}
\nu^{-}(i) \triangleright \nu_i+\nu_{i+1} \succ \nu_{\Br(i)}+\frac{\Br(i)-i}{2} &\Longrightarrow \nu^{-}(i)\triangleright \nu_{\Br(i)}+\frac{\Br(i)-i}{2}\not \succ \nu_i+\nu_{i+1}\\
&\Longrightarrow \nu^{-}(i)\gg \nu_{\Br(i)}+\frac{\Br(i)-i}{2}\not \succ \nu_i+\nu_{i+1}\,\cdot
\end{align*}
This means that we only need to prove that $\nu_i+\nu_{i+1} \succ \nu_{\Br(i)}+\frac{\Br(i)-i}{2}$ in order to satisfy the condition $(2)$.
\begin{itemize}
 \item Suppose first that there exists $i'\in \ts(\nu)$ such that $i'\in (i,\Br(i))$. We then have by \Lem{lem4} that $\Br(i')=\Br(i)$. By taking $i'$ the minimum of all such elements, we than have the sequence
 \[\nu_i+\nu_{i+1}\triangleright \cdots \triangleright \nu_{i'}+\nu_{i'+1}\]
 so that, by \eqref{Ord} and the fact that the parts between these two are in $\Sc$, we obtain
 \[\nu_i+\nu_{i+1}\succ \nu_{i'}+\nu_{i'+1} + \frac{i'-i}{2}\,\cdot\]
 Since $i'$ satisfies condition $(3)$, we then have 
 \[\nu_{i'}+\nu_{i'+1} \succ \nu_{\Br(i')}+\frac{\Br(i')-i'}{2}\,,\]
 and thus, 
 \[
  \nu_{i}+\nu_{i+1} \succ \nu_{\Br(i)}+\frac{\Br(i)-i}{2}\,\cdot
 \]
\item If $(i,\Br(i))\cap\ts(\nu) =\emptyset$, we then have  the sequence
\[\nu_i+\nu_{i+1}\triangleright \cdots \triangleright \nu_{\Br(i)-2}+\nu_{\Br(i)-1}\triangleright \nu_{\Br(i)} \]
if $\Br(i)\in J$, and otherwise,  
\[\nu_i+\nu_{i+1}\triangleright \cdots \triangleright \nu_{\Br(i)-2}+\nu_{\Br(i)-1}\triangleright \nu_{\Br(i)}+\nu_{\Br(i)+1}\,\cdot\]
By \eqref{Ord}, in the first case, we directly have 
\[
 \nu_{i}+\nu_{i+1} \succ \nu_{\Br(i)}+\frac{\Br(i)-i}{2}\,,
\]
while in the second case, we obtain
\[
  \nu_{i}+\nu_{i+1} \succ \nu_{\Br(i)}+\nu_{\Br(i)+1}+\frac{\Br(i)-i}{2}\,\cdot
\]
But, in terms of part sizes for the second case, we have by definition \eqref{cons1} that 
\[\nu_{i}+\nu_{i+1} -\left(\nu_{\Br(i)}+\frac{\Br(i)-i}{2}\right)\geq \nu_{\Br(i)+1}\geq 1\,,\]
so that, again by \eqref{cons1},
\[
 \nu_{i}+\nu_{i+1} \succ \nu_{\Br(i)}+\frac{\Br(i)-i}{2}\,\cdot
\]
\end{itemize}
\end{proof}
\bi
\section{Forbidden patterns of $\Ee$}\label{sct6}
In this section, we study the forbidden patterns that a partition in $\E$ has to avoid to be in $\Ee$.
\bi 
By the definition of the \textit{bridge} and \Thm{define}, we can see that the reversibility of $\Psi$ by $\Phi$ is a local problem. 
In fact, for any secondary part in a partition $\nu\in \E$, the reversibility only depends on the sequence starting from this part up to 
either the greatest primary part to its right if it exists, or the last part of $\nu$ if there is no primary part to its right.  
Furthermore, by condition $(3)$ of \Thm{define}, we only have to consider the sequences whose head is a sequence which is not well-ordered by $\triangleright$. 
Then, it suffices to restrict the fordidden patterns to those such that the first part does not satisfy \eqref{final}:
\begin{equation}
 \nu = \nu_1+\nu_2\,\,\, \nt\nu_3+\nu_4\,\gg\,\cdots\,\, \gg\,\, \nu_{2s+1} \quad \text{or}\quad \nu_{2s+1}+\nu_{2s+2}\,,
\end{equation}
such that $\Br(1)=2s+1$ and $\nu_{2s+1} +s \succ \nu_1+\nu_2$.
\begin{rem}\label{rem1}
It is sufficient to consider the last part to be a primary part. In fact, a sequence that ends by a secondary part can be viewed as the same sequence with this last part replaced by its upper half, as by \eqref{lex} and \eqref{Ordd},  
\begin{align*}
\nu_{2s-1}+\nu_{2s}\gg\nu_{2s+1}+\nu_{2s+2}&\Longrightarrow \nu_{2s-1}+\nu_{2s}\succ\nu_{2s+1}+\nu_{2s+2}\\
&\Longrightarrow \nu_{2s-1}+\nu_{2s}\succ\nu_{2s+1}+1\\
&\Longrightarrow \nu_{2s-1}+\nu_{2s}\gg\nu_{2s+1}\,\cdot
\end{align*}
\end{rem}
Note that, if a pattern $\nu$ is forddiden, then any pattern $\eta$ whose head or tail is $\nu$ is also forddiden. This is obvious when the tail of $\eta$ is $\nu$ since the troublesome crossing will not change.
When $\nu$ is the head of $\eta$, we have that $\mathbf{Br}_{\eta}(1) = \mathbf{Br}_{\eta}(\Br(1))$ 
and we use the same reasoning as in the proof of 
\Lem{lem3} given in Section 8 to show that 
\[\nu_{\Br(1)}+\frac{\Br(1)-1}{2}\succ \nu_1+\nu_2 \Longrightarrow \eta_{\mathbf{Br}_{\eta}(1)}+ \frac{\mathbf{Br}_{\eta}(1)-1}{2}\succ \eta_1+\eta_2\,\cdot\]
Therefore, the \textit{optimal} fordidden patterns are the ones that are allowed after removing either the first part or the last part. Furthermore, these forddiden patterns satisfy the fact that 
the Bridge of the first part is the position of the last part, so that during the process of $\Psi$, every secondary part is crossed by the last part if it is a primary part, or by its upper half. 
The optimization also implies that all these crossings are reversible by $\Phi$, except the last one which occurs with the first part of the pattern. 
\bi
In the next subsections, we first give some particular properties of the optimal forbidden patterns, and after that, we aim at retrieving the optimal forbidden patterns for four primary colors. Finally, we 
enumerate the optimal forbidden patterns, with some restrictions, for five primary colors, showing that there is an infinitude of optimal forbidden patterns for more than four primary colors.
\bi
\subsection{Properties of optimal forbidden patterns}
We first define a tool that will help to have a better understanding of the optimal forbidden patterns.
\begin{deff}
We say that two secondary colors $p$ and $q$ are \textit{primary equivalent} if and only if their orders according to the primary colors are the same, which means that
$p=a_ia_u$ and $q=a_ia_v$ for some $u,v\in (i,n]$. We then use the notation $k_p\equiv k_q$ and the equivalence class $\overline{k_p}$. This matters in the sense that
for any primary color $c$, we have the equivalence between $k_p\equiv k_q$ and 
\begin{equation}\label{compare}
 k_p\succ l_c \Longleftrightarrow k_q \succ l_c\,\cdot
\end{equation}
We can then write $\overline{k_p}\succ l_c$. For two secondary colors $p$ and $q$, we say that $\overline{k_p}\succ\overline{h_q}$ if and only if we can find a primary part $l_c$ such that
$\overline{k_p}\succ l_c \succ \overline{h_q}$.  This is equivalent to saying that $k>h$ or $k=h$ and $(p,q)=(a_ia_u,a_ja_v)$ with $i>j$.
\end{deff}
\bi
Let us now consider an optimal forbidden pattern
\begin{equation}
 \nu= \nu_1+\nu_2\,\,\, \nt\nu_3+\nu_4\,\gg\,\cdots\,\, \gg\,\, \nu_{2s+1}
\end{equation}
where the secondary parts are $\nu_{2i-1}+\nu_{2i}$ and the last part $\nu_{2s+1}$ is a primary part. In the remainder of the section, we consider the different-distance with respect to the order $\succ$.
We thus have the following properties:
\begin{enumerate}
 \item For all $i\in[1,s]$, we have $\Br(2i-1)= 2s+1$.
 \item The part $\nu_{2s+1}$ is $1$-different-distant from $\overline{\nu_{1}+\nu_{2}}$:
 \begin{equation}\label{eq:crossed}
  \nu_{2s+1}+s\succ \overline{\nu_{1}+\nu_{2}}\,,
 \end{equation}
 \item The fact that the pattern $\nu_3+\nu_4\gg\cdots\gg\nu_{2s-1}+\nu_{2s}\gg\nu_{2s+1}$ is  allowed implies by \Thm{define}, for all $i\in[2,s]$, that $\overline{\nu_{2i-1}+\nu_{2i}}$ is $1$-different-distant from $\nu_{2s+1}$,
 \begin{equation}\label{bet}
\overline{\nu_{2i-1}+\nu_{2i}}\succ \nu_{2s+1}+s+1-i\,,
 \end{equation}
 and by transitivity, this implies that $\overline{\nu_{2i-1}+\nu_{2i}}$ is $1$-different-distant from $\overline{\nu_1+\nu_2-i+1}$,
 \begin{equation}\label{loop}
  \overline{\nu_{2i-1}+\nu_{2i}}\succ \overline{\nu_1+\nu_2-i+1}\,\cdot
 \end{equation}
\item We obtain the following inequality
\begin{equation}\label{between}
\overline{\nu_3+\nu_4+1} \succ \nu_{2s+1}+s \succ \overline{\nu_1+\nu_2}\,\cdot
\end{equation}
\item If we replace the primary part $\nu_{2s+1}$ by another $\nu'_{2s+1}$ satisfying $\nu_1+\nu_2\succ\nu'_{2s+1}+s$, we then obtain the following allowed pattern 
\[
 \nu'= \nu_1+\nu_2\nt \nu_3+\nu_4\gg\cdots\gg\nu_{2s-1}+\nu_{2s}\gg \nu'_{2s+1}\,\cdot
\]
\end{enumerate}
\begin{rem}
By \eqref{rem1}, a pattern $\nu_1+\nu_2\gg\cdots\gg\nu_{2s-1}+\nu_{2s}\gg \nu_{2s+1}+\nu_{2s+2}$ only consisting of secondary parts is optimal and forbidden if and only if 
$\nu_1+\nu_2\gg\cdots\gg\nu_{2s-1}+\nu_{2s}\gg \nu_{2s+1}$ is an optimal forbidden pattern. Note that in this case, \eqref{loop} is also satisfied for $i=s+1$. 
\end{rem}
\bi
We now define a special kind of pattern, that we call a \textit{shortcut}.
\begin{deff}
 A pattern $\nu_1+\nu_2\gg\cdots\gg\nu_{2s+1}+\nu_{2s+2}$ is said to be a shortcut if 
\begin{equation}\label{kill}
\overline{\nu_{2s+1}+\nu_{2s+2}}\succ \overline{\nu_1+\nu_2-s+1}\cdot         
\end{equation}
\end{deff}
One can check that a shortcut has at least three secondary parts, and that the relation \eqref{kill} is stronger than \eqref{loop}. The following property makes the enumeration of optimal forbidden patterns 
which contain shorcuts quite difficult. 
\begin{prop}\label{pr7}
We can always build a forbidden pattern starting from any allowed pattern and using iteration of a shortcut.
\end{prop}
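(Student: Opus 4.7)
The plan is as follows. Starting from any allowed pattern $\eta$, which by Remark \ref{rem1} we may assume ends in a primary part $\eta_{\text{end}}$, and from any fixed shortcut $\sigma = \sigma_1+\sigma_2 \gg \cdots \gg \sigma_{2t+1}+\sigma_{2t+2}$ satisfying \eqref{kill}, I construct a sequence $\nu^{(0)}=\eta$ and define $\nu^{(k+1)}$ by prepending to $\nu^{(k)}$ a shifted copy $\sigma^{(k+1)}$ of $\sigma$, the shift being chosen minimal so that $\sigma^{(k+1)}_{2t+1}+\sigma^{(k+1)}_{2t+2} \gg \nu^{(k)}_1$. Since adding a common constant to all parts of $\sigma$ preserves both its internal $\gg$-chain and the condition \eqref{kill}, each $\sigma^{(k+1)}$ is still a shortcut, and $\nu^{(k)} \in \E$.

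I would next verify by induction on $k$ that $\Br(1)$ of $\nu^{(k)}$ is the position of $\eta_{\text{end}}$. The local and computational description of $\Br$ developed in Section \ref{sct5}, together with \Prp{pr5}, shows that the shortcut condition \eqref{kill} for each prepended copy $\sigma^{(k+1)}$ is precisely the obstruction that prevents any index inside $\sigma^{(k+1)}$ from becoming a fixed point of $\Br$; hence no new fixed point arises, and the position of $\eta_{\text{end}}$ remains the bridge of the new first part. With the bridge stabilised, I would track the evolution of $a_k:=\nu^{(k)}_1+\nu^{(k)}_2$ and $s^{(k)}:=s_\eta+k(t+1)$, the number of secondary parts of $\nu^{(k)}$: by \eqref{kill} the size drop across one copy of $\sigma$ is at most $t-1$, and with the bounded contribution of the minimal connecting gap and the color freedom captured by the equivalence class $\overline{\cdot}$, the quantity $\eta_{\text{end}}+s^{(k)}$ catches up with $a_k$ in the $\succ$-order after sufficiently many iterations, as can be made precise using \Lem{lem5} and \Lem{lem6}.

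Consequently, for $k$ large enough one obtains $\eta_{\text{end}}+s^{(k)} \succ \overline{a_k}$, which is exactly \eqref{eq:crossed} and means that $\nu^{(k)}$ fails \eqref{final} at $i=1$, hence is forbidden. The main obstacle I anticipate is the bridge-stability step: rigorously controlling, under iteration with successive shifts, that no intermediate fixed point of $\Br$ appears strictly between position $1$ and the position of $\eta_{\text{end}}$. This requires matching the shortcut inequality \eqref{kill} to the recursive definition \eqref{br2}--\eqref{br3} of $\Br$ and invoking \Lem{lem5}--\Lem{lem6} to transfer the relevant inequalities along the concatenated chain; once this is in place, the remainder reduces to the accumulated size/count estimate and to the shift-invariance of \eqref{kill}.
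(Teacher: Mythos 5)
Your construction (iteratively prepending shifted copies of the shortcut, then letting the secondary-part count outgrow the size of the head) matches the paper's strategy, and your closing slack count is essentially the paper's. The genuine gap is the bridge-stability step, which you yourself flag as the main obstacle, and the route you propose to close it does not work.

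The claim that "the shortcut condition \eqref{kill} $\ldots$ is precisely the obstruction that prevents any index inside $\sigma^{(k+1)}$ from becoming a fixed point of $\Br$" is not substantiated and is not correct as stated: \eqref{kill} constrains only the head-versus-tail sizes of the shortcut and says nothing about the lower halves of the intermediate secondary parts, which are what govern the recursive definition \eqref{br1}--\eqref{br3}. Invoking \Lem{lem5} and \Lem{lem6} does not help either: they treat $\not\triangleright$-chains and minimal-difference chains respectively, and a general shortcut is neither.

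The paper's actual mechanism is different and you would need it. After gluing one copy of the shortcut $\zeta$ onto the head of $\eta$ to form $\nu^{(0)}$, the paper adds a large even constant to \emph{all} parts of $\nu^{(0)}$. By Remark~\ref{moitmoit}, this raises the terminal primary part $\eta_{2t+1}$ by that constant while each lower half rises by only half of it, so for the constant large enough $\eta_{2t+1}$ becomes the bridge of every secondary part of $\nu^{(0)}$. Stability under prepending further copies of $\zeta$ (each shifted by $s$, which is legal precisely because $\zeta$ is a shortcut) then follows from \eqref{aj}: each lower half increases by at most $s$ per prepended copy, while the index-distance $(\Br(i)-i')/2$ to the bridge increases by $s+1$, so the bridge inequality \eqref{cross} continues to hold with nondecreasing slack. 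These two ingredients --- the initial large-even normalization and the $1$-versus-$1/2$ growth-rate comparison coming from \eqref{aj} and Remark~\ref{moitmoit} --- are the missing pieces; without them your induction on $k$ does not go through.
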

\bi
By considering the optimal forbidden pattern $\nu= \nu_1+\nu_2\,\,\, \nt\nu_3+\nu_4\,\gg\,\cdots\,\, \gg\,\, \nu_{2s+1}$ which does not contain any shortcut, we then have by \eqref{eq:crossed},\eqref{bet} and \eqref{kill} the following relation for all $i\in \{1,\ldots,s-1\}$:
\begin{equation}\label{arc1}
\overline{\nu_1+\nu_2-i+1}\succeq \overline{\nu_{2i+1}+\nu_{2i+2}}\succ \nu_{2s+1}+s-i\succ\overline{\nu_1+\nu_2-i}\,\cdot
\end{equation}
The latter implies the following properties:
\begin{enumerate}
\item By definition of the head and \eqref{Ordd},  $\nu_1+\nu_2$ and $\nu_3+\nu_4$ are consecutive for $\succ$.\\
\item For all $i\in \{2,\ldots,s-1\}$, two consecutive parts $\nu_{2i-1}+\nu_{2i}$ and $\nu_{2i+1}+\nu_{2i+2}$ are either consecutive in terms of $\succ$ (or equivalently not well-ordered by $\triangleright$), or consecutive in terms of $\triangleright$. In fact, by \eqref{arc1}, we necessarily have 
$$\overline{\nu_{2i+1}+\nu_{2i+2}+2}\succ \overline{\nu_{2i-1}+\nu_{2i}}\,\,\Longrightarrow \,\,\nu_{2i-1}+\nu_{2i}\not\succeq \nu_{2i+1}+\nu_{2i+2}+2 \,\cdot$$
\item By \eqref{arc1}, we have 
$$\nu_{2s+1}+2\succ \overline{\nu_1+\nu_2-s+2}\succeq \overline{\nu_{2s-1}+\nu_{2s}}\succ \nu_{2s+1}+1\,,$$
so that, by \eqref{Ord}, $\nu_{2s-1}+\nu_{2s}$ and $\nu_{2s+1}$ are consecutive for $\triangleright$.
\end{enumerate} 
\bi
We see that the optimal forbidden patterns with no shortcut have their parts either consecutive in the order $\succ$ or in the order $\triangleright$.
Let us then consider the following moves: 
\begin{itemize}
 \item The arrow $p \textcolor{blue}{\rightarrow} q$ means that $(p,q)$ is a special pair and 
it represents a pattern of the form \[(k+\chi(p\leq q))_p,k_q\,\cdot\]
\item The two-headed arrow $p \textcolor{red}{\twoheadrightarrow} q$ represents a move from a part with color $p$ to the greatest secondary part with color $q$
 smaller than the first part in terms of $\triangleright$. In fact, it indeed represents the pattern \[k+1+\chi(p\leq q))_p, k_q\,\cdot\]
\end{itemize}
Therefore, the optimal forbidden patterns with no shortcut have the form
\begin{equation}
 c_1 \circ\cdots \circ c_m \quad , \quad k
\end{equation}
where $c_1,\ldots,c_m$ are some colors, $\circ$ is either $\rightarrow$ or $\twoheadrightarrow$, and $k$ is the size of the smallest part, so that the last part is $k_{c_m}$.
\begin{ex}
For $\C=\{a<b<c<d\}$ , the pattern
\[ad\rightarrow bc \twoheadrightarrow cd \twoheadrightarrow b\quad , \quad 5\]
will represent the pattern $9_{ad},8_{bc},6_{cd},5_b$.
\end{ex}
Since an optimal forbidden pattern is allowed after removing the last part, we will consider the following form 
\begin{equation}\label{opt}
 c_1 \circ\cdots\circ c_{m-1}|\circ c_m \quad , \quad k
\end{equation} 
If we refer to \textit{an optimal pattern into another one} (see \Prp{five}), then it means that we only use the allowed pattern obtained after removing the last part.
\subsection{Optimal forbidden patterns of $\Ee$ for four primary colors}
For four primary colors $a<b<c<d$, recall \eqref{orD} the total order on primary and secondary colors
\begin{equation}
 ab<ac<ad<a<bc<bd<b<cd<c<d
\end{equation}
and the set of special pairs $\Ccc = \{(ad,bc),(cd,ab)\}$.
\begin{theo}\label{four}
The optimal forbidden patterns are the following:
\begin{align}
 cd \rightarrow ab |&\twoheadrightarrow c,d \quad ,\quad k\geq 1\\
 ad \rightarrow bc |&\twoheadrightarrow a \quad \quad,\quad  k\geq 2\quad\cdot
\end{align} 
\end{theo}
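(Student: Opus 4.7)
The plan is to prove both that the three families are optimal forbidden patterns and that no other optimal forbidden pattern exists for $n=4$.

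For sufficiency, I fix a representative, unpack it via the arrow notation, compute $\Br(1)$ from \eqref{br1}--\eqref{br3}, and verify \eqref{final} together with optimality. For $cd\rightarrow ab\,|\twoheadrightarrow c$ with $k\geq 1$, the pattern is $((k+2)_{cd},(k+2)_{ab},k_c)$; enumeration gives $I=\{1,3\}$, $J=\{5\}$, and $\nu_2=(k+1)_d\not\succ(k+1)_a$ together with $\nu_4=(k+1)_b\not\succ k_a$ yield $\Br(1)=5$. Then \eqref{final} becomes $(k+2)_c\succ(k+2)_{cd}$, which holds by \eqref{cons1}, and removing either endpoint leaves a two-part sequence in $\E$. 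The same calculation handles the $d$-ending family and $ad\rightarrow bc\,|\twoheadrightarrow a$ with $k\geq 2$. The excluded $k=1$ case $(3_{ad},2_{bc},1_a)$ is treated by a direct bridge computation: $\nu_4=1_b\succ 1_a=\nu_5$ and $\nu_2=1_d\succ 1_c=\nu_3$, so both $i'=1$ and $i'=3$ fail \eqref{br1}, and examining \eqref{br2} shows $\Sc_1=\emptyset$, giving $\Br(1)=1$ and hence no forbidden check is triggered.

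For the converse, I first use \eqref{real} at the head to force $(c_1,c_2)\in\Ccc$, which for $n=4$ leaves only $(cd,ab)$ and $(ad,bc)$. By Remark~\ref{rem1} I may assume the last part is a primary $k_{c_m}$ with $c_m\in\C$. For $m=3$, I enumerate $c_3$: after $cd\rightarrow ab$, the rule $\chi(ab\leq c_3)=1$ gives $|\nu_1+\nu_2|=k+2$, so \eqref{final} becomes $(k+2)_{c_3}\succ(k+2)_{cd}$, singled out by $c_3\in\{c,d\}$ via \eqref{orD}; after $ad\rightarrow bc$, one has $|\nu_1+\nu_2|=k+2+\chi(bc\leq c_3)$, and \eqref{final} holds exactly when $\chi=0$ and $c_3=a$, since then $(k+2)_a\succ\overline{(k+2)_{ad}}$ by \eqref{orD}. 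This recovers the three families.

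The main obstacle is ruling out length $m\geq 4$. Starting from $cd\rightarrow ab\twoheadrightarrow q_3\twoheadrightarrow\cdots\twoheadrightarrow q_{m-1}\twoheadrightarrow c_m$, backward propagation of the $\twoheadrightarrow$-rule gives $|\nu_1+\nu_2|=k+m-2+\Sigma$ with $\Sigma=\chi(ab\leq q_3)+\sum_{i=3}^{m-2}\chi(q_i\leq q_{i+1})+\chi(q_{m-1}\leq c_m)\geq 1$, the lower bound being forced by $\chi(ab\leq q_3)=1$. The case $\Sigma\geq 2$ contradicts the size bound of $\nu_{2s+1}+s=(k+m-1)_{c_m}$, while $\Sigma=1$ forces $c_m>c$ (so $c_m=d$) together with the strict descent $q_3>\cdots>q_{m-1}>c_m\geq c$, which is impossible since every secondary color is $<c$ in \eqref{orD}. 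Analogously, for $ad\rightarrow bc\twoheadrightarrow q_3\twoheadrightarrow\cdots\twoheadrightarrow c_m$ one obtains $|\nu_1+\nu_2|=k+m-1+\Sigma'$, and $\Sigma'=0$ forces the chain $c_m<q_{m-1}<\cdots<q_3<bc$, placing some $q_i$ strictly between $a$ and $bc$, which is vacuous in \eqref{orD}; $\Sigma'\geq 1$ again breaks the size bound. Hence no pattern of length $m\geq 4$ is forbidden, and the characterisation is complete.
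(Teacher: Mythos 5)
Your overall strategy mirrors the paper's: identify that the head of an optimal forbidden pattern must be the special pair $cd\rightarrow ab$ or $ad\rightarrow bc$, pin down the last primary color $c_m$ from the size constraint, rule out longer patterns, and check the $k$-threshold. The arithmetic slips in the sufficiency part are harmless but worth flagging: for $((k+2)_{cd},(k+2)_{ab},k_c)$ one does not have $\nu_2=(k+1)_d$ — the halves of $(k+2)_{cd}$ have size roughly $(k+2)/2$, so $\nu_2$ is $(u+1)_c$ or $(u+1)_d$ depending on parity; the comparison in \eqref{br1} should be $\nu_2\not\succ\nu_5+1$ and $\nu_4\not\succ\nu_5$ with $\nu_5=k_c$, not versus $(k+1)_a$ and $k_a$. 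The conclusion $\Br(1)=5$ is still correct, and your treatment of the excluded case $(3_{ad},2_{bc},1_a)$ arrives at the right answer.

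The genuine gap is in ruling out $m\geq 4$. You compute $|\nu_1+\nu_2|=k+m-2+\Sigma$ by ``backward propagation of the $\twoheadrightarrow$-rule,'' which silently assumes that every move after the head is a $\twoheadrightarrow$. That is not automatic for $n=4$: the only special pairs are $(cd,ab)$ and $(ad,bc)$, and while no special pair starts from $ab$ or $bc$, the chain can land back on $cd$ or $ad$ and produce a further $\rightarrow$ move, e.g.\ $cd\rightarrow ab\twoheadrightarrow cd\rightarrow ab\twoheadrightarrow c$. In such a chain the intermediate $\rightarrow$ contributes no descent (for $(cd,ab)$ the contribution is $\chi(cd\leq ab)=0$), so your lower bound on $|\nu_1+\nu_2|$ fails and the size-bound contradiction you derive evaporates. (One can verify that $(6_{cd},6_{ab},4_{cd},4_{ab},2_c)$ satisfies $\Br(1)=9$ and $\nu_9+4\succ\nu_1+\nu_2$, so it \emph{is} a forbidden pattern with $m=5$ — it is merely not optimal, because its tail $(4_{cd},4_{ab},2_c)$ is already forbidden.) Your argument establishes only ``no forbidden pattern of length $\geq 4$ with purely $\twoheadrightarrow$ moves after the head,'' not the full optimality claim.

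The paper sidesteps this issue by applying \eqref{loop} at $i=3$ only: for $n=4$ the move from $\nu_3+\nu_4=h_{ab}$ (or $h_{bc}$) to $\nu_5+\nu_6$ cannot be a special pair, so $\nu_5+\nu_6\preceq (h-2)_{cd}$ (or analogously), and hence $\overline{\nu_5+\nu_6}\preceq\overline{\nu_1+\nu_2-2}$, which contradicts \eqref{loop} at $i=3$ the moment $s\geq 3$, no matter what happens further down the chain. That single-position check is both shorter and avoids the assumption you need. To repair your proof you would either have to adopt that $i=3$ argument, or separately show that an optimal forbidden pattern for $n=4$ cannot contain a second special pair (say, by observing that the tail from a repeated special pair would itself be forbidden and would survive removal of the first part, contradicting optimality — but then you are implicitly using the very statement you are trying to prove).
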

\begin{proof}[Proof]
Let us consider the following diagram:
\begin{center}
\begin{tikzpicture}
 \draw [->] (45:3) arc (180:-120:0.2);
\draw (42.45:3.15) node {$-$};
 \foreach \x in {0,...,5}
 \draw (60*\x:2) circle (0.3);
  \draw (60:2) node {$ab$};
 \draw (120:2) node {$ac$};
 \draw (180:2) node {$ad$};
 \draw (240:2) node {$bc$};
 \draw (300:2) node {$bd$};
  \draw (210:2.6) circle (0.2) node {$a$};
  \draw (330:2.6) circle (0.2) node {$b$};
  \draw (20:2.5) circle (0.2) node {$c$};
   \draw (40:2.5) circle (0.2) node {$d$};
 \draw (0:2) node {$cd$};
 \foreach \x in {0,...,9}
 \draw [red, ->>] (60*\x-5:2.25) arc (60*\x+30:60*\x-90:1.1);
 \draw [blue, ->] (0:1.7) to[bend left] (60:1.7);
 \draw [blue, ->] (180:1.7) to[bend left] (240:1.7);
 \draw (-90:3) node {general diagram};
\end{tikzpicture}
$\quad\quad\quad$
\begin{tikzpicture}
 \foreach \x in {0,...,5}
 \draw (60*\x:2) circle (0.3);
 \draw (60:2) node {$ab$};
  \draw (120:2) node {$ac$};
 \draw (180:2) node {$ad$};
 \draw (240:2) node {$bc$};
 \draw (300:2) node {$bd$};
 \draw (0:2)node {$cd$};
 \draw [blue, ->] (-8:2) arc (-8:-292:2);
 \draw [red, ->>] (60:2.2) arc (60:7:2.2);
 \draw [red] (52:2.1) arc (52:-292:2.1);
 \draw [red] (68:2.1)--(60:2.2);
 \draw (-90:3) node[align=center] {actual moves with examples\\
 $cd \textcolor{red}{\twoheadrightarrow} ab$ and $ab \textcolor{blue}{\rightarrow} cd$};
 \end{tikzpicture}
\end{center}
We can see that the main nodes are the secondary colors, and we remark that a move $p \textcolor{red}{\twoheadrightarrow} q$ is indeed between 
$p$ and the color $q$ of the greatest secondary part smaller, in terms of $\triangleright$,
than a part with color $p$. Thus, any move $p \textcolor{red}{\twoheadrightarrow} q'$ with another secondary color $q'$ will be \textit{greater} than the move 
$p \textcolor{red}{\twoheadrightarrow} q$ represented in the first diagram.
As we notice on the second diagram, proceeding clockwise, we need more than one loop for a move $p \textcolor{red}{\twoheadrightarrow} q$, 
while a move $p\textcolor{blue}{\rightarrow} q$ requires less than one loop.
\\Since a forbidden pattern must necessarily begin with a sequence of secondary parts not well-ordered by $\triangleright$, we then have as the head of the pattern
either $cd \rightarrow ab$ or $ad\rightarrow bc$. 
\begin{itemize}
 \item Suppose that the pattern begins by $cd \rightarrow ab$. By \eqref{between}, if it ends by a primary part $k_{c_s}$, by setting $\nu_1+\nu_2=h_{cd}$  we then have   
 \[h_{ab}+1\succ k_{c_s}+s\succ h_{cd}\]
 so that $c_m\in \{c,d\}$. Another interpretation is that, in the diagram, the color $c_m$ is in the clockwise arc $(ab,cd)$, and it leads to the same result.
 Suppose now that $s\geq 3$, which means that the third part is secondary. Since the next move can be at least $ab\twoheadrightarrow cd$, we 
 then obtain that
 \[h_{cd}-2\succeq \nu_{5}+\nu_6 \Longrightarrow \overline{h_{cd}-2} \succeq \overline{\nu_{5}+\nu_6}\cdot\]
 This contradicts \eqref{loop}. Therefore, $s=2$ and, by \eqref{between}, we obtain the pattern $cd \rightarrow ab \twoheadrightarrow c,d$. It actually corresponds to the pattern 
 $(k+2)_{cd},(k+2)_{ab},k_{c,d}$. Here $k_{c,d}$ means $k_c$ or $k_d$. Since we must necessarily have that 
 \[\beta((k+2)_{ab})\not\succ k_{c,d}\]
 and a quick check according to the parity of $k$ shows that is always the case for $k\geq 1$.
 \item The same reasoning occurs when the pattern begins by $ad \rightarrow bc$. We obtain the pattern $ad \rightarrow bc \twoheadrightarrow a$ which corresponds to $(k+2)_{ad},(k+1)_{bc},k_a$. 
 We then look for $k$ such that 
 \[\beta((k+1)_{bc})\not\succ k_{a}\]
 and a quick check according to the parity of $k$ shows that is always the case for $k\geq 2$.
\end{itemize}
Note that we cannot have a optimal forbidden pattern consisting of three secondary parts, since whatever the head is, the third secondary part does not respect the relation \eqref{loop}.
\end{proof}
\Thm{four} and \Prp{pr7} imply that, for four primary colors, we do not have any shortcut. This is not the case for more than four primary colors, as we now see in the next subsection.
\bi
\subsection{Optimal forbidden patterns of $\Ee$ for more than four primary colors} We can restrict the study to five colors, as the set of colored partitions generated by five primary colors is embedded in any set of colored partitions generated by more than four primary colors. 
We then consider the set of primary colors $\C = \{a<b<c<d<e\}$.
The corresponding diagram with the primary equivalence classes for the secondary colors gives
\begin{center}
 \begin{tikzpicture}
\draw [->] (45:4) arc (180:-120:0.2);
\draw (42.7:4.15) node {$-$};
 \foreach \x in {0,...,9}
 \draw (36*\x:2) circle (0.3);
 \draw (36:2) node {$ab$};
 \draw (72:2) node {$ac$};
 \draw (108:2) node {$ad$};
 \draw (144:2) node {$ae$};
 \draw (180:2) node {$bc$};
 \draw (0:2) node {$de$};
 \draw (-36:2) node {$ce$};
 \draw (-72:2) node {$cd$};
 \draw (-108:2) node {$be$};
 \draw (-144:2) node {$bd$};
 \draw (12:2.5) circle (0.15) node {$d$};
 \draw (24:2.5) circle (0.15) node {$e$};
 \draw (162:2.5) circle (0.15) node {$a$};
 \draw (270:2.5) circle (0.15) node {$b$};
 \draw (342:2.5) circle (0.15) node {$c$};
 \draw (90:3) circle (0.3) node {$\overline{a\cdot}$};
 \draw (-144:3) circle (0.3) node {$\overline{b\cdot}$};
 \draw (-54:3) circle (0.3) node {$\overline{c\cdot}$};
 \draw (0:3) circle (0.3) node {$\overline{d\cdot}$};
 
 \foreach \x in {0,...,9}
 \draw [red, ->>] (36*\x:2.3) arc (36*\x+36:36*\x-72:0.85);
 \draw [blue, ->] (2:1.7) to[bend left] (33:1.7);
 \draw [blue, ->] (0:1.7) to[bend left] (72:1.7);
 \draw [blue, ->] (-2:1.7) to[bend left] (-178:1.7);
 \draw [blue, ->] (-36:1.7) to[bend left] (36:1.7);
 \draw [blue, ->] (-69:1.72) to[bend left] (39:1.7);
 \draw [blue, ->] (108:1.7) to[bend left] (180:1.7);
 \draw [blue, ->] (146:1.7) to[bend left] (177:1.7);
 \draw [blue, ->] (-108:1.7) to[bend left] (-75:1.7);
 \draw [blue, ->] (142:1.7) to[bend left] (-72:1.7);
 \draw [blue, ->] (144:1.7) to[bend left] (-144:1.7);
 
 \draw [foge, ->] (-6:3) arc (-6:-48:3);
 \draw [foge, ->] (-60:3) arc (-60:-138:3);
 \draw [foge, ->] (-150:3) arc (-150:-264:3);
 \draw [foge, ->] (-276:3) arc (-276:-354:3);
\end{tikzpicture}
\end{center}
Let us first discuss the behaviour of the patterns with moves $\rightarrow p\rightarrow$. We can see in the diagram that this happens only if $p=cd$. Consider now the pattern
$$ae \rightarrow cd \rightarrow ab \twoheadrightarrow de \rightarrow bc \quad ,\quad k$$
which actually represents the pattern
\[(k+3)_{ae},(k+2)_{cd},(k+2)_{ab},k_{de},k_{bc}\,\cdot\]
We notice that this pattern is a shortcut. As we saw in \Prp{pr7}, the enumeration of the forbidden patterns then becomes intricate. We give the following lemma to restrict our study to some particular patterns without shortcut.
\begin{lem}\label{lem7}
For five primary colors, the patterns of secondary parts without the moves $\rightarrow cd \rightarrow$ do not contain any shortcut.
\end{lem}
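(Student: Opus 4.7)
My plan is a potential-function argument coupled with a careful enumeration of the minimum-drop configurations in the state graph. Define a weight $w(k_p)=4k+\gamma(p)$ on secondary parts, where $\gamma(p)\in\{1,2,3,4\}$ is the index of the first primary color of $p$. The primary-equivalence-class order on secondary parts is then exactly the integer order of $w$ (after shifting sizes to share the class of the reference part), and the shortcut condition $\overline{\nu_{2s+1}+\nu_{2s+2}}\succ\overline{\nu_1+\nu_2-s+1}$ rewrites as the inequality $D_w:=w(\nu_1+\nu_2)-w(\nu_{2s+1}+\nu_{2s+2})\leq 4s-5$.

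A direct calculation classifies the per-move drops: every $\twoheadrightarrow$ drops $w$ by $4$ (same class) or $5$ (across a class), and every special-pair $\rightarrow$ drops $w$ by $2$ or $3$. Enumeration of the ten special pairs identifies the four blues of minimum drop $2$ as exactly $(cd,ab),(ce,ab),(de,bc)$ and $(ae,cd)$; each ends at a class-minimum color ($ab$, $bc$, or $cd$), so any $\twoheadrightarrow$ following it is forced to cross a class boundary and drops $w$ by $5$. The diagram also shows that $cd$ is the unique secondary color admitting both an outgoing and an incoming $\rightarrow$ arrow, so the hypothesis "no $\rightarrow cd\rightarrow$" is equivalent to "no two consecutive blue moves." The sequence of moves then factors as an optional red prefix followed by $b$ phases $(BR^{m_i})$ with $m_i\geq 1$ for $1\leq i<b$, and summing elementary per-phase minima gives the baseline $D_w\geq 4s-b-1+E$ in the case $m_b=0$ (and $D_w\geq 4s-b+E$ otherwise), where $E\geq 0$ is the total excess over those minima. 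A shortcut would thus require $E\leq b-4$, which already rules out $b\leq 3$.

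The main obstacle is the case $b\geq 4$, where the precise structure of the state graph becomes essential. I enumerate all minimum-drop phases and check that their ending states lie in a restricted set; only the three phase types $(cd,ab)$, $(ce,ab)$, and $(de,bc)$ with one terminal red end at $de$ or $ae$, the two "launchpad" states for a subsequent minimum-drop phase. An inspection shows that the maximal chain of consecutive minimum-drop phases has length exactly three, essentially $(cd,ab)\to(de,bc)\to(ae,cd)$, each with one terminal red. This chain terminates at one of $be,bd,bc$, from which either no blue is available (for $bd$ and $bc$) or only $(be,cd)$ of drop $3$ is. Tracking the possibilities -- either short continuations from $be$ costing excess $1$ each, or longer "reset" paths like $(be,cd)+4\,R$ costing excess $2$ to return to $ae$ -- shows that every phase beyond the initial three contributes at least one extra unit of excess. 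Hence $E\geq b-3$, giving $D_w\geq 4s-4>4s-5$, which contradicts the shortcut inequality. This completes the proof; the case $m_b\geq 1$ is entirely analogous.
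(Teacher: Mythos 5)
Your weight function $w(k_p)=4k+\gamma(p)$ does faithfully encode the primary-equivalence order, the translation of the shortcut condition into $D_w\leq 4s-5$ is correct, and your identification of the four drop-$2$ special pairs $(cd,ab),(ce,ab),(de,bc),(ae,cd)$, each landing on a class minimum so that any following $\twoheadrightarrow$ drops at least $5$, checks out (note, though, that a $\twoheadrightarrow$ can drop $w$ by as much as $8$, not only $4$ or $5$; since you only need lower bounds this is harmless). The reduction of the hypothesis to ``no two consecutive $\rightarrow$'' via the uniqueness of $cd$ is also right. This is a genuinely different route from the paper's: there, one inserts a part of color $cd$ strictly between every $\gg$-consecutive pair other than $cd\rightarrow ab$ (that pair being handled through the $\triangleright$-ordered part preceding it, which exists precisely because $\rightarrow cd\rightarrow$ is excluded), and a single induction yields $\nu_1+\nu_2-s+1\succeq\nu_{2s+1}+\nu_{2s+2}$ with no analysis of the move word whatsoever.

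The gap is the step ``$E\geq b-3$'' for $b\geq 4$, which carries the entire proof and is asserted rather than proved. First, ``every phase beyond the initial three contributes at least one extra unit of excess'' presupposes that the zero-excess phases are the first three; nothing forces this, and zero-excess phases may occur anywhere and in several separated chains. Second, what is actually needed is $\sum_i d_i\leq 4$, where $d_i$ is the saving of phase $i$ over the all-red baseline ($d_i\leq 1$ in general, $d_b\leq 2$ for a terminal bare blue): with two disjoint zero-excess chains one could a priori reach $\sum_i d_i\geq 5$, so you must show that the excess spent travelling between chains cancels the savings of the later chain. Concretely, every zero-excess chain is a suffix of $(cd,ab)\rightarrow(de,bc)\rightarrow(ae,cd)$ and ends in $\overline{b\cdot}$, and returning from $\overline{b\cdot}$ to a restart color in $\overline{c\cdot}$, at $de$, or at $ae$ costs excess at least $3$, $2$, $1$ respectively---exactly the residual savings available from those states. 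This ``state potential'' bookkeeping (potential $3$ at $\overline{c\cdot}$, $2$ at $de$, $1$ at $ae$, $0$ at $\overline{b\cdot}$, with each move's excess paying for any increase of potential), together with the interaction with a terminal bare blue of saving $2$, is the missing content; your sketch exhibits one reset path but does not bound all of them. Until that amortization is written out the argument does not close, whereas the paper's insertion argument sidesteps it entirely.
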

The patterns without shortcut listed by the previous lemma are not exhaustive. In fact, we can have a pattern with moves $\rightarrow cd \rightarrow$ without shortcut, as we give in the following example.
\begin{ex}
The pattern $ae \rightarrow cd \rightarrow ab\,,\,k$ 
is not a shortcut and is even allowed for $k\neq 3$.
\end{ex}
The following theorem gives an exhaustive list of optimal forbidden patterns without moves $\rightarrow cd \rightarrow$. The notation $<g_1,\ldots,g_t>$ denotes the \textit{multiplicative} group generated by $g_1,\ldots,g_t$, and the notation $(pattern)$ means that the move $pattern$ is optional. 
\bi
\begin{theo}\label{five}
The optimal forbidden patterns with no move $\rightarrow p\rightarrow$ are the following:
\begin{align}
\textbf{\underline{head} :}\quad ad \rightarrow bc\nonumber\\
 ad \rightarrow bc &|\quad \twoheadrightarrow a\quad\quad,\quad  k\geq 2 \label{p1}\\
 \textbf{\underline{head} :}\quad be \rightarrow cd\nonumber\\
 be \rightarrow cd &|\quad \twoheadrightarrow b \quad\quad,\quad  k\geq 2 \label{p2}\\
 \textbf{\underline{head} :}\quad de \rightarrow ab\nonumber\\
 de \rightarrow ab &|\quad \twoheadrightarrow d,e \quad ,\quad  k\geq 1 \label{p3}\\
 \textbf{\underline{head} :}\quad de \rightarrow ac\nonumber\\
 de \rightarrow ac (\twoheadrightarrow ab) &|\quad\twoheadrightarrow  d,e \quad ,\quad  k\geq 1 \label{p4}\\
 \textbf{\underline{head} :}\quad ae \rightarrow bc\nonumber\\
 ae \rightarrow bc &|\quad \twoheadrightarrow a \quad \quad,\quad  k\geq 2 \label{p5}\\
 \textbf{\underline{head} :}\quad ae \rightarrow bd\nonumber\\
 ae \rightarrow bd (\twoheadrightarrow bc) &|\quad\twoheadrightarrow a \quad\quad ,\quad  k\geq 2 \label{p6}\\
 \textbf{\underline{head} :}\quad ae \rightarrow cd\nonumber\\
 ae \rightarrow cd  &|\quad \twoheadrightarrow b \quad\quad ,\quad  k\geq 2 \label{p20}\\
 ae \rightarrow cd (\pat) &|\quad \twoheadrightarrow a \quad\quad ,\quad  k\geq 2 \label{p7}\\
 \text{where  } \pat \in <\twoheadrightarrow\eqref{p2}>&& \nonumber\\
 \eqref{p7} (\twoheadrightarrow be) (\twoheadrightarrow bd) (\twoheadrightarrow bc)&|\quad\twoheadrightarrow a \quad\quad ,\quad  k\geq 2 \label{p8}\\
 \textbf{\underline{head} :}\quad de \rightarrow bc\nonumber\\
 de \rightarrow bc &|\quad\twoheadrightarrow a \quad\quad ,\quad  k\geq 2 \label{p21}\\
 de \rightarrow bc \quad (\pat)&|\quad\twoheadrightarrow e \quad\quad ,\quad  k\geq 1  \label{p9}\\
 \text{where  } \pat \in <\twoheadrightarrow\eqref{p8}, \twoheadrightarrow \eqref{p6},\twoheadrightarrow \eqref{p5},(\twoheadrightarrow ae)\twoheadrightarrow \eqref{p1}>&& \nonumber\\
 \eqref{p9}(\twoheadrightarrow ae)(\twoheadrightarrow ad)(\twoheadrightarrow ac)(\twoheadrightarrow ab)&|\quad\twoheadrightarrow e \quad\quad ,\quad  k\geq 1  \label{p10}\\
 \eqref{p10}&|\quad\twoheadrightarrow d \quad\quad ,\quad  k\geq 2 \label{p40} \\
 \eqref{p10}&|\quad\twoheadrightarrow d \quad\quad ,\quad  1  \label{p41}\\
 \text{with } \eqref{p10} \text{ not ending by }ae,be&& \nonumber\\
 \eqref{p9}\twoheadrightarrow \eqref{p7}&|\quad\twoheadrightarrow  be,bd\quad ,\quad  2\label{p12}\\
 \eqref{p9}\twoheadrightarrow \eqref{p8}&|\quad\twoheadrightarrow  ae\quad\quad ,\quad  2 \label{p13}\\
 \eqref{p13}&|\quad\twoheadrightarrow ad \quad\quad ,\quad  2  \label{p31}\\
 \text{with } \eqref{p13} \text{ not ending by }be&& \nonumber\\
 \textbf{\underline{head} :}\quad cd,ce \rightarrow ab\nonumber\\
  cd,ce \rightarrow ab &|\quad \twoheadrightarrow d,e \quad ,\quad  k\geq 1 \label{p22} \\
 cd,ce \rightarrow ab (\pat)&|\quad \twoheadrightarrow c \quad\quad ,\quad  k\geq 2  \label{p14}\\
 \text{where  } \pat \in <\twoheadrightarrow\eqref{p3}, \twoheadrightarrow \eqref{p4},\twoheadrightarrow \eqref{p10}>&& \nonumber\\
\eqref{p14} \twoheadrightarrow de&|\quad \twoheadrightarrow c \quad\quad ,\quad  k\geq 2  \label{p15}\\
 \eqref{p14}&|\quad\twoheadrightarrow c \quad\quad ,\quad  1  \\
 \text{with } \eqref{p14} \text{ ending by }ac,ab,bc&& \nonumber\\
 \eqref{p14} \twoheadrightarrow \eqref{p12}\twoheadrightarrow be &|\quad\rightarrow cd \quad\quad ,\quad  3  \\
  \eqref{p14} \twoheadrightarrow \eqref{p13}\twoheadrightarrow ae &|\quad\rightarrow cd \quad\quad ,\quad  3  \\
  \eqref{p14}\twoheadrightarrow \eqref{p9} (\twoheadrightarrow ae)\twoheadrightarrow ad &|\quad\twoheadrightarrow ac \quad\quad ,\quad  2  \\
  \eqref{p14}\twoheadrightarrow \eqref{p9} &|\quad\twoheadrightarrow ac \quad\quad ,\quad  2  \\
 \text{with } \eqref{p9} \text{ ending by }bc&& \nonumber
 \end{align} 
\end{theo}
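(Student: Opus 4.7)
The plan is to proceed case-by-case on the ten possible heads, which are the special pairs $(p,q)\in\Ccc$ using at most five primary colors from $\C=\{a<b<c<d<e\}$. Explicitly, these are $(cd,ab)$, $(ce,ab)$, $(de,ab)$, $(de,ac)$, $(de,bc)$, $(ad,bc)$, $(ae,bc)$, $(ae,bd)$, $(ae,cd)$, $(be,cd)$, which matches the ten heads listed in the theorem (grouping $(cd,ab)$ with $(ce,ab)$). For each head, I would use the defining constraint \eqref{between} together with \eqref{eq:crossed} to pin down the admissible final primary colors $c_m$: since the last part must satisfy $\overline{\nu_3+\nu_4+1}\succ \nu_{2s+1}+s\succ\overline{\nu_1+\nu_2}$, the color $c_m$ must lie in the clockwise arc (on the diagram preceding \Thm{four}, extended to five colors) between $\nu_1+\nu_2$ and $\nu_3+\nu_4$. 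By \Lem{lem7}, the assumption of no $\rightarrow cd\rightarrow$ moves means we never encounter shortcuts, so relations \eqref{arc1} apply and every pair of consecutive secondary parts is either consecutive in $\succ$ (a $\rightarrow$) or consecutive in $\triangleright$ (a $\twoheadrightarrow$).

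Next, for each fixed head, I would enumerate the possible middle sequences of $\twoheadrightarrow$ moves. The key tool is \eqref{arc1}, which I would apply iteratively: after $i$ secondary parts, the equivalence class $\overline{\nu_{2i+1}+\nu_{2i+2}}$ is forced to lie in a specific half-open arc bounded by $\overline{\nu_1+\nu_2-i+1}$ and $\overline{\nu_1+\nu_2-i}$ together with the $\nu_{2s+1}+s-i$ constraint. Combined with the fact that, with no $\rightarrow cd\rightarrow$ move, the only consecutive-in-$\succ$ step available is the initial head, all intermediate parts arise through the two-headed arrows on the diagram. I would then traverse the five-color diagram starting from each head, branching whenever more than one compatible $\twoheadrightarrow$ is possible; this generates the recursive structures displayed as \eqref{p4}, \eqref{p6}, \eqref{p7}--\eqref{p10}, \eqref{p14}--\eqref{p15}, etc., where one earlier pattern serves as an internal block inside another.

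For each candidate pattern I would carry out two verifications. First, optimality: remove either the first or the last part and verify that the remainder is allowed, using \Thm{define}$(3)$ applied to the smaller pattern. Second, the forbidden condition itself: confirm that $\beta(\nu_3+\nu_4+1) \not\succ$ the modified last primary part (or equivalently that condition \eqref{final} fails at $i=1$), checking the parity of $k$ to determine the minimal admissible value of $k$ listed next to each pattern. The threshold $k\geq 1$ or $k\geq 2$ (and the special tail cases with fixed small $k$ in \eqref{p41}, \eqref{p31}, \eqref{p15} and below) emerge precisely from this parity check via the explicit formulas for $\alpha,\beta$ in \eqref{ab}.

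The main obstacle will be completeness: showing that no other optimal forbidden pattern (without a $\rightarrow cd\rightarrow$ move) is missed. I would argue this by induction on the number $s$ of secondary parts, using the fact that removing the last part must leave a pattern allowed by \Thm{define}; this forces the penultimate segment to be one of the already enumerated \emph{allowed} sub-patterns, and the final $\twoheadrightarrow$ move plus the tail color is then determined, up to the explicit choices, by \eqref{between} and the geometry of the diagram. The bookkeeping for the recursive "$\pat \in \langle\cdots\rangle$" cases requires care, since an allowed sub-pattern appearing as an embedded block must still respect the global constraint that the Bridge of the first part reaches the last part — this follows from transitivity of being $1$-different-distant and justifies why the same pattern families reappear inside one another in the statement.
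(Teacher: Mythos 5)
Your overall strategy --- case analysis over the ten heads in $\Ccc$, the arc constraints derived from \eqref{eq:crossed}, \eqref{bet} and \eqref{arc1}, the diagram traversal, and the parity check on $k$ via $\alpha,\beta$ to fix the thresholds --- is the same as the paper's. But there is one concrete error that would derail the enumeration: you assert that, with no $\rightarrow cd\rightarrow$ move, ``the only consecutive-in-$\succ$ step available is the initial head,'' so that all intermediate moves are $\twoheadrightarrow$. This is false. The hypothesis only excludes \emph{two consecutive} special-pair moves $\rightarrow p\rightarrow$ (which the paper notes forces $p=cd$); isolated special-pair moves $c_i\rightarrow c_{i+1}$ in the interior of the pattern are permitted and do occur. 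The recursive families in the statement exist precisely because of them: \eqref{p7} contains iterations of $\twoheadrightarrow be\rightarrow cd$, \eqref{p9} contains blocks built on the heads $ae\rightarrow bc$, $ae\rightarrow bd$, $ad\rightarrow bc$, and several terminal cases even end with $\rightarrow cd$. Under your restriction the traversal would never produce these families, so the enumeration would be incomplete, and your stated mechanism (``branching whenever more than one compatible $\twoheadrightarrow$ is possible'') cannot by itself generate the $\pat\in\langle\cdots\rangle$ structures you cite as its output.

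The missing ingredient is the paper's handling of exactly this case: when an internal move $c_i\rightarrow c_{i+1}$ is possible, one needs the loop-counting observation that $m$ moves from the first color to another secondary color correspond to at least $m-1$ but fewer than $m$ primary loops around the diagram. This is what shows that an allowed pattern beginning with $c_3\rightarrow c_4$ can be inserted as a block while still satisfying \eqref{arc1}, and that on exiting such a block the only available moves are $\twoheadrightarrow$ toward $c_{s+1}$ (possibly through the classes $\overline{a\cdot}$, $be,bd,bc$, or $de$, giving the optional tails). Without this, or an equivalent control on how many times the pattern may wind around the diagram, the completeness induction you sketch does not close: knowing that removing the last part leaves an allowed pattern does not by itself tell you which allowed patterns containing internal $\rightarrow$ moves may appear, and that is the substantive part of the proof.
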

\begin{proof}[Proof of \Thm{five}]
We recall that the optimal forddiden patterns 
$$\nu= \nu_1+\nu_2\,\,\, \nt\nu_3+\nu_4\,\gg\,\cdots\,\, \gg\,\, \nu_{2s+1}$$
with no shortcut have the form described in \eqref{opt}:
\[
 c_1 \circ\cdots\circ c_{s}|\circ c_{s+1} \quad , \quad k \quad\cdot
\]
The part $\nu_{2i-1}+\nu_{2i}$ has the secondary color $c_i$ for all $i\in[1,s]$, and the primary part $\nu_{2s+1}$ has the color $c_{s+1}$.
\begin{itemize}
 \item[\textbf{Rule 1 :}] \textbf{For all $i\in [2,s]$, $c_{s+1}$ belongs to the clockwise
 arc $(\overline{c_i},\overline{c_1})$.}
 In fact, by \eqref{arc1}, we have that
 $$\nu_{2s+1} +s-i+2\succ \overline{\nu_1+\nu_2 -i+2}\succeq \overline{\nu_{2i-1}+\nu_{2i}} \succ \nu_{2s+1} +s-i+1\,,$$
 so that by starting a clockwise loop in the diagram from $\overline{c_i}$, we respectively meet $c_{s+1},\overline{c_1}$ and $\overline{c_i}$. 
 \item[\textbf{Rule 2 :}] \textbf{If we have  a move $c_i\twoheadrightarrow c_{i+1}$, then $c_{i+1}$ strictly belongs to the clockwise
 arc $(c_i,c_{s+1})$.}
 In fact, we have by the primary equivalence definition and \eqref{arc1} that 
 \[\nu_{2s+1} +s+2-i \succ\nu_{2i-1}+\nu_{2i}\succ \nu_{2s+1} +s+1-i \succ \nu_{2i+1}+\nu_{2i+2}\succ \nu_{2s+1} +s-i \]
 and the move $c_i\twoheadrightarrow c_{i+1}$ implies that 
 \[\nu_{2i-1}+\nu_{2i}\triangleright \nu_{2i+1}+\nu_{2i+2} \Longleftrightarrow \nu_{2i-1}+\nu_{2i}-1\succ \nu_{2i+1}+\nu_{2i+2}\,\cdot\]
We thus obtain the following inequality
 \[\nu_{2s+1} +s+1-i  \succ \nu_{2i-1}+\nu_{2i}-1\succ \nu_{2i+1}+\nu_{2i+2}\succ \nu_{2s+1} +s-i\,\cdot\]
\end{itemize}
With these two rules, we can retrieve all the optimal forddiden patterns. In our construction, we will see that our moves are indeed mimimal for $\gg$. This means that, in the case where $(c_i,c_{i+1})\in \Ccc$, we necessarily make the move $c_i \rightarrow c_{i+1}$. By \Lem{lem6}, with the minimality of the consecutive size differences, once the part $\nu_{2s+1}$ crosses the parts $\nu_{2s-1}+\nu_{2s}$, it then crosses all the parts up to $\nu_1+\nu_2$.
Therefore, the choice of the size $k$ is such that the part $k_{c_{s+1}}$ crosses the last secondary part $(k+1+\chi(c_s\leq c_{s+1}))_{c_s}$. We thus have
\begin{equation}
 k_{c_{s+1}}\succeq \beta((k+1+\chi(c_s\leq c_{s+1}))_{c_s})\,\cdot
\end{equation}
We then proceed as follows.
\begin{enumerate}
 \item We select a head $c_1\rightarrow c_2$, and $c_{s+1}$ a primary color in the clockwise arc $(c_2,c_1)$. The best way is to begin with those with the shortest arc.
 \item The next move must necessarily be of the form $c_2\twoheadrightarrow c_3$.
 \begin{enumerate}
 \item With Rule 2, the patterns \eqref{p1},\eqref{p2},\eqref{p3} and \eqref{p5} follow immediately. In fact, in these cases, the only primary colors in the arc $(c_1,c_2)$ directly follow $c_2$ in the clockwise
 sense before all the secondary colors.
 \begin{center}
 \begin{tikzpicture}[scale=0.5, every node/.style={scale=0.5}]
\draw [->] (45:4) arc (180:-120:0.2);
\draw (42.7:4.15) node {$-$};
 \foreach \x in {0,...,9}
 \draw (36*\x:2) circle (0.3);
 \draw (36:2) node {$ab$};
 \draw (72:2) node {$ac$};
 \draw (108:2) node {$ad$};
 \draw (144:2) node {$ae$};
 \draw (180:2) node {$bc$};
 \draw (0:2) node {$de$};
 \draw (-36:2) node {$ce$};
 \draw (-72:2) node {$cd$};
 \draw (-108:2) node {$be$};
 \draw (-144:2) node {$bd$};
 \draw (12:2.5) circle (0.15) node {$d$};
 \draw (24:2.5) circle (0.15) node {$e$};
 \draw (162:2.5) circle (0.15) node {$a$};
 \draw (270:2.5) circle (0.15) node {$b$};
 \draw (342:2.5) circle (0.15) node {$c$};
 \draw [blue, ->] (2:1.7) to[bend left] (33:1.7);
 \draw [blue, ->] (108:1.7) to[bend left] (180:1.7);
 \draw [blue, ->] (146:1.7) to[bend left] (177:1.7);
 \draw [blue, ->] (-108:1.7) to[bend left] (-75:1.7);
 \foreach \x in {1,5,8}
 \draw [red, ->>] (36*\x:2.3) arc (36*\x+36:36*\x-72:0.85);
\end{tikzpicture}
\end{center}
 \item We also obtain the patterns \eqref{p20},\eqref{p21}, and \eqref{p22} since the chosen primary color is directly after $c_2$.
 \begin{center}
 \begin{tikzpicture}[scale=0.5, every node/.style={scale=0.5}]
\draw [->] (45:4) arc (180:-120:0.2);
\draw (42.7:4.15) node {$-$};
 \foreach \x in {0,...,9}
 \draw (36*\x:2) circle (0.3);
 \draw (36:2) node {$ab$};
 \draw (72:2) node {$ac$};
 \draw (108:2) node {$ad$};
 \draw (144:2) node {$ae$};
 \draw (180:2) node {$bc$};
 \draw (0:2) node {$de$};
 \draw (-36:2) node {$ce$};
 \draw (-72:2) node {$cd$};
 \draw (-108:2) node {$be$};
 \draw (-144:2) node {$bd$};
 \draw (12:2.5) circle (0.15) node {$d$};
 \draw (24:2.5) circle (0.15) node {$e$};
 \draw (162:2.5) circle (0.15) node {$a$};
 \draw (270:2.5) circle (0.15) node {$b$};
 \draw (342:2.5) circle (0.15) node {$c$};
 
 \foreach \x in {1,5,8}
 \draw [red, ->>] (36*\x:2.3) arc (36*\x+36:36*\x-72:0.85);
 \draw [blue, ->] (-2:1.7) to[bend left] (-178:1.7);
 \draw [blue, ->] (-36:1.7) to[bend left] (36:1.7);
 \draw [blue, ->] (-69:1.72) to[bend left] (39:1.7);
 \draw [blue, ->] (142:1.7) to[bend left] (-72:1.7);
\end{tikzpicture}
\end{center}
 \item In the case \eqref{p4} and \eqref{p6}, there is only one secondary color in the arc which occurs before the chosen primary color, and we can 
 see that from this color we only have moves of the form $\twoheadrightarrow$. The only possibility if we choose $c_3$ to be this secondary color will be then to directly reach the primary color at $c_4$.
 We can also decide to choose $c_3$ as the primary color. We recall that
 \[c_1\rightarrow c_2 (\twoheadrightarrow c_3)| \twoheadrightarrow c_4\]
 means that the choice of the secondary color in between $c_2$ and the primary color $c_4$ is optional.
 \begin{center}
 \begin{tikzpicture}[scale=0.5, every node/.style={scale=0.5}]
\draw [->] (45:4) arc (180:-120:0.2);
\draw (42.7:4.15) node {$-$};
 \foreach \x in {0,...,9}
 \draw (36*\x:2) circle (0.3);
 \draw (36:2) node {$ab$};
 \draw (72:2) node {$ac$};
 \draw (108:2) node {$ad$};
 \draw (144:2) node {$ae$};
 \draw (180:2) node {$bc$};
 \draw (0:2) node {$de$};
 \draw (-36:2) node {$ce$};
 \draw (-72:2) node {$cd$};
 \draw (-108:2) node {$be$};
 \draw (-144:2) node {$bd$};
 \draw (12:2.5) circle (0.15) node {$d$};
 \draw (24:2.5) circle (0.15) node {$e$};
 \draw (162:2.5) circle (0.15) node {$a$};
 \draw (270:2.5) circle (0.15) node {$b$};
 \draw (342:2.5) circle (0.15) node {$c$};
 
 \foreach \x in {1,2,5,6}
 \draw [red, ->>] (36*\x:2.3) arc (36*\x+36:36*\x-72:0.85);
 \draw [blue, ->] (144:1.7) to[bend left] (-144:1.7);
 \draw [blue, ->] (0:1.7) to[bend left] (72:1.7);
\end{tikzpicture}
\end{center}
 \end{enumerate} 
 For all these cases, one can check that it is not possible to build from them some forbidden pattern with only secondary parts. 
\item The remaining case is where $c_3$ is between in the arc $(c_2,c_{s+1})$ and such that we can have a move $c_3\rightarrow c_4$.
We then use the following property of our optimal forddiden pattern  due to \eqref{arc1}: 
\textit{when we do $m$ moves from the first color to another secondary color, in the diagram, we do around the first color fewer than $m$ but at least $m-1$ primary loops.}
This means that, by taking the allowed pattern resulting from the removal of the last part in an optimal forbidden pattern beginning by $c_3\rightarrow c_4$, we will always satisfy \eqref{arc1}.
For this reason, we wisely begin with $c_1\rightarrow c_2= ae \rightarrow cd$ and $c_{s+1}=a$.
\begin{enumerate}
 \item For $c_1\rightarrow c_2= ae \rightarrow cd$ and $c_{s+1}=a$. 
 \begin{center}
 \begin{tikzpicture}[scale=0.5, every node/.style={scale=0.5}]
\draw [->] (45:4) arc (180:-120:0.2);
\draw (42.7:4.15) node {$-$};
 \foreach \x in {0,...,9}
 \draw (36*\x:2) circle (0.3);
 \draw (36:2) node {$ab$};
 \draw (72:2) node {$ac$};
 \draw (108:2) node {$ad$};
 \draw (144:2) node {$ae$};
 \draw (180:2) node {$bc$};
 \draw (0:2) node {$de$};
 \draw (-36:2) node {$ce$};
 \draw (-72:2) node {$cd$};
 \draw (-108:2) node {$be$};
 \draw (-144:2) node {$bd$};
 \draw (12:2.5) circle (0.15) node {$d$};
 \draw (24:2.5) circle (0.15) node {$e$};
 \draw (162:2.5) circle (0.15) node {$a$};
 \draw (270:2.5) circle (0.15) node {$b$};
 \draw (342:2.5) circle (0.15) node {$c$};
 \foreach \x in {5,...,8}
 \draw [red, ->>] (36*\x:2.3) arc (36*\x+36:36*\x-72:0.85);
 \draw [blue, ->] (146:1.7) to[bend left] (177:1.7);
 \draw [blue, ->] (-108:1.7) to[bend left] (-75:1.7);
 \draw [blue, ->] (142:1.7) to[bend left] (-72:1.7);
 \draw [blue, ->] (144:1.7) to[bend left] (-144:1.7);
\end{tikzpicture}
\end{center}
If $c_3 \neq c_{s+1}=a$, by both rules, we have that $c_3 \in \{be,bd,bc\}$. As soon as $c_3 \neq be$, 
 we obtain by the second rule that the pattern is
 \[ae \rightarrow cd\twoheadrightarrow bc |\twoheadrightarrow a \quad \text{ or } \quad ae \rightarrow cd\twoheadrightarrow bd (\twoheadrightarrow bc) |\twoheadrightarrow a\,\cdot\]
 If $c_3 = be$, then we can iterate the pattern \eqref{p2} (which is $be\rightarrow cd $) as many times as we want. By doing this, we do as many loops as the number of moves, which is twice the number of
 iterations. However, once we move out from this iteration, we can only move to $a$ by optionally passing by $be,bd,bc$ through $\twoheadrightarrow$. 
 In fact, anytime we reach $cd$, we cannot make a move $cd\rightarrow$, so that by the second rule, we need to move back to either $be,bd,bc$ or $a$ using $\twoheadrightarrow$. We then obtain the patterns \eqref{p7} and \eqref{p8}.
 Note that for these patterns, we stay in the arc $(cd,a)$, and the passage from $ae=c_1$ to $c_s$ requires more than $s-1$ primary loops, so that the pattern
 \[ae \cdots c_s \twoheadrightarrow ae\]
 requires $s+1$ primary loops. We also observe that apart from $c_1=ae$ and $c_{s+1}$, all colors $c_i$ belong to $\{cd,be,bd,bc\}$, so that their upper halves can never be a primary part with color $a$ and
 we do not have any optimal forbidden patterns with only secondary parts coming from a forbidden pattern of that form.
 \item For $c_1\rightarrow c_2= de \rightarrow bc$ and $c_{s+1}=d,e$. 
 \begin{center}
 \begin{tikzpicture}[scale=0.5, every node/.style={scale=0.5}]
\draw [->] (45:4) arc (180:-120:0.2);
\draw (42.7:4.15) node {$-$};
 \foreach \x in {0,...,9}
 \draw (36*\x:2) circle (0.3);
 \draw (36:2) node {$ab$};
 \draw (72:2) node {$ac$};
 \draw (108:2) node {$ad$};
 \draw (144:2) node {$ae$};
 \draw (180:2) node {$bc$};
 \draw (0:2) node {$de$};
 \draw (-36:2) node {$ce$};
 \draw (-72:2) node {$cd$};
 \draw (-108:2) node {$be$};
 \draw (-144:2) node {$bd$};
 \draw (12:2.5) circle (0.15) node {$d$};
 \draw (24:2.5) circle (0.15) node {$e$};
 \draw (162:2.5) circle (0.15) node {$a$};
 \draw (270:2.5) circle (0.15) node {$b$};
 \draw (342:2.5) circle (0.15) node {$c$};
 \foreach \x in {1,...,5}
 \draw [red, ->>] (36*\x:2.3) arc (36*\x+36:36*\x-72:0.85);
 \draw [blue, ->] (-2:1.7) to[bend left] (-178:1.7);
 \draw [blue, ->] (108:1.7) to[bend left] (180:1.7);
 \draw [blue, ->] (146:1.7) to[bend left] (177:1.7);
 \draw [blue, ->] (142:1.7) to[bend left] (-72:1.7);
 \draw [blue, ->] (144:1.7) to[bend left] (-144:1.7);
\end{tikzpicture}
\end{center}
 We use the same reasoning to show that the only moves that can leave the arc $(bc,a)$ are \eqref{p1}, \eqref{p5},\eqref{p6} and \eqref{p8}. For \eqref{p1} (the move $ad\rightarrow bc$), in order 
 to make as many loops as the number of moves, we can optionally add a move $\twoheadrightarrow ae \twoheadrightarrow$ before reaching $ad$. This is why we can compose a pattern using the patterns 
  \eqref{p5},\eqref{p6} and \eqref{p8} and $ae \twoheadrightarrow \eqref{p1}$, and we obtain \eqref{p9}. In this composition, we can remark that we do not make a move $cd\rightarrow$. In fact, the only way to reach $cd$ is to do a move
  \eqref{p8}, but in this move $cd$ can only be reached after the move $ae\rightarrow cd$, so that we cannot do $cd\rightarrow$.
  \\Once we move out of this composition, we can only reach the primary color $d,e$ by optionally passing by  the primary equivalent class  $\overline{a.}$, which consists of the secondary colors $ae,ad,ac,ab$. In addition, these moves have the form $\twoheadrightarrow$. We then obtain \eqref{p10},
  \eqref{p40} and \eqref{p41}.
  Note that for these patterns, the secondary colors stay in the arc $(cd,d)$, and the passage from $de=c_1$ to $c_s$ requires more than $s-1$ primary loops, so that the pattern
 \[de \cdots c_s \twoheadrightarrow de\]
 requires $s+1$ primary loops. To obtain the forbidden patterns with only secondary colors, we just need to choose those which correspond to the forbidden patterns ending by a primary color and such that 
 the upper half of the last part corresponds to the primary color and is at least equal than the lower  half of the previous secondary part. We then have the patterns \eqref{p12},\eqref{p13} and \eqref{p31}.
 \item For $c_1\rightarrow c_2= cd,ce \rightarrow bc$ and $c_{s+1}=c$. 
 \begin{center}
 \begin{tikzpicture}[scale=0.5, every node/.style={scale=0.5}]
\draw [->] (45:4) arc (180:-120:0.2);
\draw (42.7:4.15) node {$-$};
 \foreach \x in {0,...,9}
 \draw (36*\x:2) circle (0.3);
 \draw (36:2) node {$ab$};
 \draw (72:2) node {$ac$};
 \draw (108:2) node {$ad$};
 \draw (144:2) node {$ae$};
 \draw (180:2) node {$bc$};
 \draw (0:2) node {$de$};
 \draw (-36:2) node {$ce$};
 \draw (-72:2) node {$cd$};
 \draw (-108:2) node {$be$};
 \draw (-144:2) node {$bd$};
 \draw (12:2.5) circle (0.15) node {$d$};
 \draw (24:2.5) circle (0.15) node {$e$};
 \draw (162:2.5) circle (0.15) node {$a$};
 \draw (270:2.5) circle (0.15) node {$b$};
 \draw (342:2.5) circle (0.15) node {$c$};
 \foreach \x in {0,1}
 \draw [red, ->>] (36*\x:2.3) arc (36*\x+36:36*\x-72:0.85);
 \draw [blue, ->] (2:1.7) to[bend left] (33:1.7);
 \draw [blue, ->] (0:1.7) to[bend left] (72:1.7);
 \draw [blue, ->] (-2:1.7) to[bend left] (-178:1.7);
 \draw [blue, ->] (-36:1.7) to[bend left] (36:1.7);
 \draw [blue, ->] (-69:1.72) to[bend left] (39:1.7);
\end{tikzpicture}
\end{center}
 We use the same reasoning to show that the only moves that can leave the arc $(ab,c)$ are \eqref{p10}, \eqref{p4},\eqref{p3}.
 As before, in the composition of these moves, we remark that we do not make a move $cd\rightarrow$ and the secondary colors stay in the clockwise arc $(cd,c)$.
 Once we do not make these moves, we can only go to $c$  by optionally passing by $de$ through $\twoheadrightarrow$. 
 For these patterns, the passage from $de=c_1$ to $c_s$ requires more than $s-1$ primary loops, so that the pattern
 \[cd,ce \cdots c_s \twoheadrightarrow ce,cd\]
 requires $s+1$ primary loops.
 We obtain the optimal forbidden patterns consisting of only secondary parts, always by choosing those corresponding to optimal forbidden patterns ending primary colors and such that 
 the upper half of the last part corresponds to the primary color and is at least equal than the lower  half of the previous secondary part.
 \end{enumerate}
\end{enumerate} 
\end{proof}
To conclude, we see that for more than four colors, there exist some shortcuts. However, even for five colors, the set of optimal forbidden patterns without shorcut is infinite, as a consequence of \Thm{five}, since some patterns use as many iterations of others.  The enumeration of the forbidden patterns then becomes intricate for more than four primary colors. 
\section{Bressoud's algorithm, Motzkin paths and oriented rooted forests}\label{sct7}
In this section, we relate the partitions in $\E$ to oriented rooted forests, and give a new potential approach to deal with the enumeration of the forbidden patterns.
\bi    
Let us take a partition $\nu\in \E$ and write it as 
\begin{equation}
\nu = (\nu_1,\cdots,\nu_{p+2s})\,,
\end{equation}
where as before, $p$ is the number of primary parts and $s$ is the number of secondary parts.
We recall that the set $J$ is the set of indices that correspond to the primary parts, and $I$ corresponds to the upper halves, so that $I+1$ 
is associated  to the lower halves.
\\We observe that the sequence $\la = \Psi(\nu)$ has also $p+2s$ primary parts. We then have $\la = \la_1,\ldots,\la_{p+s}$. For any $x\in [1,p+2s]$, we set $\theta_{x}$ to be the index in $\la$ of the primary part that comes from $\nu_x$. 
\begin{ex}
As an example, we apply $\Phi$ on the partition $\la=(12_a,7_b,6_d,6_c,5_a,4_d,4_c,4_b,4_a,3_c,1_d,1_c,1_b,1_a)$ and take $\nu = \Phi(\la)$:
\begin{small}
\begin{equation}\label{example2}
\begin{array}{ccccccccccccccccccc}
\begin{matrix}
12_a\\
\underline{7_b}\\
\underline{6_d}\\
6_c\\
5_a\\
4_d\\
4_c\\
4_b\\
4_a\\
3_c\\
1_d\\
1_c\\
1_b\\
1_a
\end{matrix} &\rightarrowtail&
\begin{matrix}
\mathbf{12_a}\\
\mathbf{13_{bd}}\\
6_c\\
5_a\\
4_d\\
4_c\\
4_b\\
4_a\\
3_c\\
1_d\\
1_c\\
1_b\\
1_a
\end{matrix}&
\looparrowright&
\begin{matrix}
14_{bd}\\
11_a\\
6_c\\
\underline{5_a}\\
\underline{4_d}\\
4_c\\
4_b\\
4_a\\
3_c\\
1_d\\
1_c\\
1_b\\
1_a
\end{matrix}
&
\rightarrowtail&
\begin{matrix}
14_{bd}\\
11_a\\
\mathbf{6_c}\\
\mathbf{9_{ad}}\\
4_c\\
4_b\\
4_a\\
3_c\\
1_d\\
1_c\\
1_b\\
1_a
\end{matrix}
&
\looparrowright&
\begin{matrix}
14_{bd}\\
11_a\\
10_{ad}\\
5_c\\
\underline{4_c}\\
\underline{4_b}\\
4_a\\
3_c\\
1_d\\
1_c\\
1_b\\
1_a
\end{matrix}
&
\rightarrowtail&
\begin{matrix}
14_{bd}\\
11_a\\
10_{ad}\\
\mathbf{5_c}\\
\mathbf{8_{bc}}\\
4_a\\
3_c\\
1_d\\
1_c\\
1_b\\
1_a
\end{matrix}
&
\looparrowright&
\begin{matrix}
14_{bd}\\
11_a\\
10_{ad}\\
9_{bc}\\
\underline{4_c}\\
\underline{4_a}\\
3_c\\
1_d\\
1_c\\
1_b\\
1_a
\end{matrix}
&
\rightarrowtail&
\begin{matrix}
14_{bd}\\
11_a\\
10_{ad}\\
9_{bc}\\
8_{ac}\\
3_c\\
\underline{1_d}\\
\underline{1_c}\\
1_b\\
1_a
\end{matrix}
&
\rightarrowtail&
\begin{matrix}
14_{bd}\\
11_a\\
10_{ad}\\
9_{bc}\\
8_{ac}\\
3_c\\
2_{cd}\\
\underline{1_b}\\
\underline{1_a}
\end{matrix}
&
\rightarrowtail&
\begin{matrix}
14_{bd}\\
11_a\\
10_{ad}\\
9_{bc}\\
8_{ac}\\
3_c\\
2_{cd}\\
2_{ab}
\end{matrix}
\end{array}\,\cdot
\end{equation}
\end{small}
We retrieve the partition $\nu$ of \Expl{ex51}. By considering the occurrences of the primary parts, we obtain the following diagram:
\begin{center}
\begin{tikzpicture}
\draw(-1.5,2) node{$\la_x$ :};
 \draw (0,2) node {$12_a$};
 \draw (1,2) node {$7_b$};
 \draw (2,2) node {$6_d$};
 \draw (3,2) node {$6_c$};
 \draw (4,2) node {$5_a$};
 \draw (5,2) node {$4_d$};
 \draw (6,2) node {$4_c$};
 \draw (7,2) node {$4_b$};
 \draw (8,2) node {$4_a$};
 \draw (9,2) node {$3_c$};
 \draw (10,2) node {$2_d$};
 \draw (11,2) node {$2_c$};
 \draw (12,2) node {$2_b$};
 \draw (13,2) node {$2_a$};
 \draw (1,1.8)--(1,1.7)--(2,1.7)--(2,1.8); \draw[<->] (1.5,1.7)--(0.7,0.8);
 \draw (3,1.8)--(3,1.6)--(8,1.6)--(8,1.8); \draw[<->] (5.5,1.6)--(7.2,0.8);
 \draw (4,1.8)--(4,1.7)--(5,1.7)--(5,1.8); \draw[<->] (4.5,1.7)--(3.5,0.8);
 \draw (6,1.8)--(6,1.7)--(7,1.7)--(7,1.8); \draw[<->] (6.5,1.7)--(5.5,0.8);
 \draw (10,1.8)--(10,1.7)--(11,1.7)--(11,1.8); \draw[<->] (10.5,1.7)--(10.5,0.8);
 \draw (12,1.8)--(12,1.7)--(13,1.7)--(13,1.8); \draw[<->] (12.5,1.7)--(12.5,0.8);
 \draw(-1.5,0.5) node{$\nu=\Phi(\la)$ :};
 \draw (0.5,0.5) node {$14_{bd}$}; \draw [->](0.5,0.3)--(0,-0.3);\draw [->](0.5,0.3)--(1,-0.3);
 \draw (2,0.5) node {$11_a$};
 \draw (3.5,0.5) node {$10_{ad}$}; \draw [->](3.5,0.3)--(3,-0.3);\draw [->](3.5,0.3)--(4,-0.3);
 \draw (5.5,0.5) node {$9_{bc}$}; \draw [->](5.5,0.3)--(5,-0.3);\draw [->](5.5,0.3)--(6,-0.3);
 \draw (7.5,0.5) node {$8_{ac}$}; \draw [->](7.5,0.3)--(7,-0.3);\draw [->](7.5,0.3)--(8,-0.3);
 \draw (9,0.5) node {$3_c$};
 \draw (10.5,0.5) node {$2_{cd}$}; \draw [->](10.5,0.3)--(10,-0.3);\draw [->](10.5,0.3)--(11,-0.3);
 \draw (12.5,0.5) node {$2_{ab}$}; \draw [->](12.5,0.3)--(12,-0.3);\draw [->](12.5,0.3)--(13,-0.3);
 \draw(-1.5,-0.5) node{$\nu_x$ :};
 \draw (0,-0.5) node {$7_d$};
 \draw (1,-0.5) node {$7_b$};
 \draw (2,-0.5) node {$11_a$};
 \draw (3,-0.5) node {$5_d$};
 \draw (4,-0.5) node {$5_a$};
 \draw (5,-0.5) node {$5_b$};
 \draw (6,-0.5) node {$4_c$};
 \draw (7,-0.5) node {$4_c$};
 \draw (8,-0.5) node {$4_a$};
 \draw (9,-0.5) node {$3_c$};
 \draw (10,-0.5) node {$2_d$};
 \draw (11,-0.5) node {$2_c$};
 \draw (12,-0.5) node {$2_b$};
 \draw (13,-0.5) node {$2_a$};

\end{tikzpicture} .
\end{center}
We recall that 
\begin{align*}
 (p,s)=(2,6),\quad J=\{3,10\}, \quad I = \{1,4,6,8,11,13\},\quad I+1 = \{2,5,7,9,12,14\}
\end{align*}
and we have
\begin{equation}
\begin{array}{|c|c|c|c|c|c|c|c|c|c|c|c|c|c|c|}
\hline
 x&1&2&3&4&5&6&7&8&9&10&11&12&13&14\\
 \hline
 \theta_x&2&3&1&5&6&7&8&4&9&10&11&12&13&14\\
 \hline
\end{array} \,\cdot            
\end{equation}
We also compute $\Br$ for $\nu=\Phi(\la)$ and we obtain
\begin{equation}
\begin{array}{|c|c|c|c|c|c|c|}
\hline
 i&1&4&6&8&11&13\\
 \hline
 \Br(i)&3&8&8&8&11&13\\
 \hline
\end{array} \,\cdot            
\end{equation}
\end{ex}
The most important results of this part are the following.
\bi 
\begin{prop}[\textbf{Motzkin path behaviour of the final positions}]\label{pr3}
For any $(i,i',j,j')\in I ^2\times J^2$, we have the following relations:
\begin{align}
&\text{If }i<i',\text{ then either}\quad \theta_{i}<\theta_{i+1}<\theta_{i'}<\theta_{i'+1}\quad\text{or}\quad \theta_{i'}<\theta_{i}<\theta_{i+1}<\theta_{i'+1}\,\cdot\label{ii}\\
&\text{If }j<j',\text{ then}\quad \theta_j<\theta_{j'}\,\cdot\label{jj}\\
&i+1\leq\theta_{i+1}\quad\text{and}\quad \theta_{j}\leq j\,\cdot \label{i1}\\
&\text{Either}\quad\theta_{j}<\theta_{i}\quad\text{or}\quad \theta_{i+1}<\theta_{j}\,\cdot\label{ij}
\end{align}
\end{prop}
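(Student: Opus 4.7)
The strategy is to analyze the machine $\Psi$ via the state-by-state description in Proposition \ref{pr2}, processing secondaries right-to-left. The central claim is \eqref{ii}; the others follow by a counting argument once \eqref{ii} is established.

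For \eqref{ii}, fix $i<i'$ in $I$ and track what happens when $S_i:=\nu_i+\nu_{i+1}$ is about to be processed. By the right-to-left processing order, $S_{i'}$ has already been split into its upper half $u_{i'}$ and lower half $l_{i'}$, which are adjacent primaries in the current state. When $S_i$'s Step 1 crossings bring it immediately to the left of $u_{i'}$, we apply Lemma \ref{lem2} to the pair $(S_i, S_{i'}^{\text{at-split}})$. This yields a dichotomy: either $\beta(S_i)\succ u_{i'}$, so $S_i$ stops before $u_{i'}$, giving the disjoint case $\theta_i<\theta_{i+1}<\theta_{i'}<\theta_{i'+1}$; or the chain $u_{i'}+1\gg \alpha(S_i-1)\succ \beta(S_i-1)\succ l_{i'}$ of \eqref{sw} holds, in which case $S_i$ crosses $u_{i'}$ and the inequality $\beta(S_i-1)\succ l_{i'}$ then forces it to stop strictly before $l_{i'}$, placing its halves inside $i'$'s arc and giving the nested case $\theta_{i'}<\theta_i<\theta_{i+1}<\theta_{i'+1}$. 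The ``forbidden'' third scenario, in which $S_i$ crosses both halves of $i'$, is thereby excluded. The non-consecutive case (with other secondaries between $i$ and $i'$ in the enumeration) follows by induction on the processing order: the intervening secondaries have already settled into nested/disjoint arcs, and the dichotomy of Lemma \ref{lem2} continues to govern $S_i$'s encounter with $i'$'s halves.

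Property \eqref{jj} is almost immediate: Step 1 exchanges only a (secondary, primary) pair and Step 2 merely inserts primaries in place of a secondary, so no two primaries originally in $\nu$ ever swap. For \eqref{i1}, we run a position-counting argument. Let $p_j:=|J\cap[1,j)|+|I\cap[1,j)|+1$ be the un-enumerated position of $\nu_j$ in $\nu$. Each Step 1 crossing in which a left-secondary crosses $\nu_j$ displaces $\nu_j$ by $-1$; each Step 2 split to the left of $\nu_j$ shifts it by $+1$. Partitioning the $L:=|I\cap[1,j)|$ left-secondaries into the $L_c$ that cross $\nu_j$ and the $L_s=L-L_c$ that do not (and hence split to $\nu_j$'s left), one computes $\theta_j=p_j+L_s-L_c=j-2L_c\leq j$. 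For the lower half of $S_i$, the dichotomy of \eqref{ii} forbids any later-processed secondary $S_{i''}$ (with $i''<i$) from crossing $l_i$, since such a crossing would force $\theta_{i''+1}>\theta_{i+1}$, contradicting \eqref{ii}. The same style of counting then yields $\theta_{i+1}=(i+1)+c_i\geq i+1$, where $c_i$ is the number of crossings $S_i$ itself performs in its Step 1.

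Finally, \eqref{ij} follows from \eqref{ii} combined with \eqref{jj}: a primary $\nu_j$ cannot end up strictly between $\theta_i$ and $\theta_{i+1}$, because the only parts that ever occupy that interval in $\la$ are halves of secondaries nested inside $i$ by \eqref{ii}. The main obstacle is the general case of \eqref{ii} with intermediate secondaries, where we must verify that after the intervening arcs have been formed, the effective comparison between $S_i$ and the halves of $i'$ is still controlled by Lemma \ref{lem2}; this requires careful bookkeeping of how the sizes of $S_i$ and those halves have been modified by the earlier processing of the intervening secondaries.
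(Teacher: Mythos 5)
Your overall strategy is the paper's: both proofs track the dynamics of $\Psi$, use the fact that a crossing preserves the relative order of every pair of parts except the (secondary, primary) pair being crossed, and reduce \eqref{ii} to the dichotomy of \Lem{lem2} (stop before the upper half, or cross it once and stop before the lower half). Your quantitative formulas $\theta_j=j-2L_c$ and $\theta_{i+1}=(i+1)+c_i$ are a sharper, correct restatement of the paper's qualitative claims that primary parts only move backward and lower halves only move forward.

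Two points need attention. First, the step you yourself flag as requiring ``careful bookkeeping'' --- verifying that when $S_i$ reaches the halves of a distant $S_{i'}$ the hypothesis of \Lem{lem2} still holds after both parts have been shrunk by intermediate crossings --- is the actual crux of \eqref{ii}, and your proposal leaves it open. The paper never performs this distant comparison: it only ever compares the current secondary $s(\delta^v)$ with the halves of the secondary split immediately before it (the next one in $I$), which is precisely what the triplet structure of \Prp{pr2} packages (the split produces the troublesome pair $(s(\gamma^{v+1}),g(\mu^{v+1}))$ and $\mu^{v}$ is a strict tail of $\mu^{v+1}$, i.e.\ each new split lands at or to the left of the previous lower half). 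The nested-or-disjoint statement for arbitrary $i<i'$ then follows by recursion on $v$ together with order preservation, not by a direct application of \Lem{lem2} to $(S_i,S_{i'})$; since \Prp{pr2} precedes this proposition you should simply invoke it rather than redo the estimate. Second, \eqref{ij} does not follow formally from \eqref{ii} and \eqref{jj}: neither statement constrains where a primary sits relative to an arc, so saying that only nested halves can occupy $(\theta_i,\theta_{i+1})$ assumes what is to be proved. The intended argument is direct: if $S_i$ crosses $\nu_j$, then $\nu_j$ ends up to the left of both halves and, by order preservation, stays there, giving $\theta_j<\theta_i$; if not, $S_i$ splits with $l_i$ to the left of $\nu_j$, no later secondary passes $l_i$, and any later crossing of $\nu_j$ still leaves it to the right of $l_i$, giving $\theta_{i+1}<\theta_j$.
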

\bi
\begin{prop}[\textbf{Bridge according to the final positions}]\label{pr4}
For any $i\in I$, we have the following: 
\begin{itemize}
 \item If there exists $i<j\in J$ such that $\theta_j<\theta_i$, then 
 \begin{equation}
 \Br(i)=\min\{j\in J: j>i\text{ and }\theta_j<\theta_i\}\,\cdot
 \end{equation}
 \item Otherwise, 
 \begin{equation}
 \Br(i)=\max\{ i'\in I: i'\geq  i \text{ and }\theta_{i'}\leq \theta_i\}\,\cdot
 \end{equation}
\end{itemize}
\end{prop}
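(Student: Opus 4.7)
The plan is to deduce \Prp{pr4} from \Prp{pr5}, which identifies $\Br(i)$ as the original index of the first primary part crossed by the secondary $\nu_i+\nu_{i+1}$ during $\Psi$. The main task is to translate this crossing information into statements about the final-position vector $\theta$.

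First I would record two elementary observations about $\Psi$. \emph{(a)} Crossings in Step~1 only swap a secondary with an adjacent primary, so the relative order of primary parts is invariant under $\Psi$; in particular, for $j<j'$ both in $J$ we have $\theta_j < \theta_{j'}$, as already asserted in \eqref{jj}. \emph{(b)} For $i \in I$ and $x > i+1$, the primary coming from $\nu_x$ satisfies $\theta_x < \theta_i$ if and only if it is crossed by the secondary at $i$ during $\Psi$. Indeed, each crossing moves a primary leftward past a secondary, and the only secondary whose processing can push a primary originally at position $x>i+1$ across the final position of $\nu_i$ is the one at $i$ itself, since by the time the secondary at $i$ processes every right-of-$i$ secondary has already split.

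With these observations in hand, I would handle Case~1. If $A_i := \{j \in J : j>i,\ \theta_j<\theta_i\}$ is non-empty, observation~\emph{(b)} shows that the secondary at $i$ crosses at least one $J$-primary during $\Psi$, so by \Prp{pr5} we have $\Br(i)>i$ and the first primary crossed is $\nu_{\Br(i)}$. I would argue that $\Br(i) \in J$: if instead $\Br(i) \in I$, then by \Lem{lem4} the secondary at $\Br(i)$ is a fixed point of $\Br$ and splits in place, and using the size condition defining $\Br(i)=\max \Sc_i$ in \eqref{br2} together with \Lem{lem1} and \Lem{lem2}, I would show that after crossing the upper half $\nu_{\Br(i)}$ the secondary at $i$ cannot further cross the lower half $\nu_{\Br(i)+1}$ and hence reaches no primary beyond, contradicting $A_i \neq \emptyset$. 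Once $\Br(i) \in J$ is established, observation~\emph{(a)} implies that the first $J$-primary crossed is the smallest-indexed one among those crossed, giving $\Br(i) = \min A_i$.

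For Case~2 I would suppose $A_i = \emptyset$. By \Prp{pr5}, either $\Br(i)=i$ (no crossings) or $\Br(i)>i$ with $\Br(i) \in I$, the case $\Br(i) \in J$ being ruled out by $A_i = \emptyset$ and observation~\emph{(b)}. If $\Br(i)=i$, the secondary at $i$ splits in place, so every $\nu_x$ with $x>i+1$ ends to the right of $\nu_i$ in $\lambda$, which reduces the target set $\{i' \in I : i' \geq i,\ \theta_{i'} \leq \theta_i\}$ to $\{i\}$ and confirms the formula. If instead $\Br(i)>i$ and $\Br(i) \in I$, then $\Br(i)$ lies in the target set by observation~\emph{(b)}, and maximality reduces to proving $\theta_{i''} > \theta_i$ for every $i'' > \Br(i)$ in $I$; by observation~\emph{(b)} this amounts to showing the secondary at $i$ does not cross the upper half $\nu_{i''}$, which follows from the same size-blocking argument as in Case~1 combined with the maximality clause in the definition $\Br(i) = \max \Sc_i$. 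The main obstacle will be this size-blocking argument: rigorously showing that, once the secondary at $i$ has crossed $\nu_{\Br(i)}$, the induced inequalities force it to stop before crossing the next upper half; this calls for a careful induction on the number of crossings performed by the secondary at $i$, leveraging the exact conditions \eqref{br1}--\eqref{br2} together with \Lem{lem1} and \Lem{lem2}, with the special-pair subtlety encoded in $\gg$ handling the borderline cases.
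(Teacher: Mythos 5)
Your skeleton is close to the paper's: both proofs rest on \Prp{pr5} (the first part crossed by the secondary at $i$ is the one coming from $\nu_{\Br(i)}$) together with the dictionary ``$\theta_x<\theta_i$ if and only if $\nu_x$ is crossed by the secondary at $i$'', which is your observation \emph{(b)} and is exactly what the last bullets of the proof of \Prp{pr3} establish. The divergence is in how the first crossed part is then identified inside the crossed set, and this is where your argument has a genuine gap. You reduce everything to a ``size-blocking'' claim --- that once the secondary at $i$ has crossed $\nu_{\Br(i)}$ it cannot go on to cross $\nu_{\Br(i)+1}$ (and hence nothing beyond) --- and you explicitly defer its proof to ``a careful induction on the number of crossings'' using \eqref{br1}--\eqref{br2} and Lemmas 2.4--2.5. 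That claim is true, but as sketched it is not a proof: the primaries sitting between $\nu_{\Br(i)}$ and $\nu_{\Br(i)+1}$ at that stage include the upper and lower halves of all the intermediate secondaries with indices in $(i,\Br(i))$, so the induction you would need amounts to re-deriving the nesting structure of the whole process, i.e.\ re-proving \Prp{pr3}. Since this is precisely the step that distinguishes the two cases of the statement, leaving it as an ``obstacle'' means the core of the proposition is unproved.

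The paper avoids this entirely by working with final positions only: the first crossed part is the crossed part with the \emph{smallest} $\theta$-value, and the relations \eqref{ii}, \eqref{jj}, \eqref{ij} of \Prp{pr3} then force this minimizer to be $\min\{j\in J: j>i,\ \theta_j<\theta_i\}$ when the crossed set meets $J$ (a short contradiction using \eqref{ii} and \eqref{ij}), and $\max\{i'\in I: i'\geq i,\ \theta_{i'}\leq\theta_i\}$ otherwise (because \eqref{ii} makes the $\theta$-order on crossed $I$-indices the reverse of the index order). In particular, your ``size-blocking'' claim is an immediate consequence of \eqref{ii}: in either alternative of \eqref{ii} one has $\theta_{\Br(i)+1}>\theta_{i+1}>\theta_i$, so the lower half $\nu_{\Br(i)+1}$ is never crossed by the secondary at $i$, with no further induction needed. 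If you replace your planned induction by an appeal to \eqref{ii} and \eqref{ij}, your two-case argument goes through and essentially coincides with the paper's proof; as written, the decisive step is missing.
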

\bi
\begin{rem}
We indeed have by \Prp{pr3} for all $i\in I$ that
$$
\theta_{i+1}-(i+1) = |\{u\in I\sqcup J: u>i\text{ and } \theta_u<\theta_i\}|\,,
$$
and \Prp{pr4} gives the following equivalence:
$$
\Br(i)=i \,\Longleftrightarrow\, \theta_{i+1}=i+1\,\cdot
$$
\end{rem}
\bi
Let us set $I=\{i_1<\cdots<i_s\}$ and $J^+ = J \sqcup\{0,p+2s+1\} = \{j_0<j_1<\cdots<j_p<j_{p+1}\}$ and $(\theta_0,\theta_{p+2s+1}) = (0,p+2s+1)$. Then, by \eqref{jj} and \eqref{ij}  of \Prp{pr3}, for any consecutive 
$j,j'\in J^+$, there exists a unique $ V\subset \{1,\ldots,s\}$ such that 
$$
 \{\theta_{j}+1,\ldots,\theta_{j'}-1\} = \{\theta_x: x\in \{i_v,i_{v}+1: v\in V\}\}\,\cdot
$$
This means that the final positions between those of consecutive primary parts consist of those of the upper and lower halves of some secondary parts.  
By \eqref{ii}, we can check that those secondary parts are consecutive, and $V$ is indeed an interval. Since the positions $\theta_{i+1}$ form an increasing sequence, we then have a unique decomposition 
$$
 \{1,\ldots,s\} = V_0\sqcup V_1\sqcup \cdots \sqcup V_p
$$
where the $V_y$ are consecutive intervals. 
\bi
We refer the reader to \cite{RS99} for the definition of the combinatorial terms we use in the following.
In each interval, the positions behave like a \textit{Dyck path}. In fact, the positions $\theta_i$ of the upper halves occur as the moves $(1,1)$ and
the positions $\theta_{i+1}$ of the lower halves as the moves $(1,-1)$. We also draw the positions $\theta_j$ of the primary parts as the moves $(1,0)$, and we obtain what is called a \textit{Motzkin path} (also see \cite{DS77}).
With the bijection between \textit{Dyck paths} of length $2l$ and the oriented rooted trees with $l$ egdes,
one can then see the initial positions as an oriented rooted forest with exactly 
$p+1$ trees and $s$ edges. 
\begin{ex}
We take the corresponding representations for  the example  \eqref{example2}. We then have that 
\[(i_1,i_2,i_3,i_4,i_5,i_6)=(1,4,6,8,11,13),\quad (j_0,j_1,j_2,j_3)=(0,3,10,15)\] and 
\[0,\ldots,15 = \theta_{j_0},\theta_{j_1},\theta_{i_1},\theta_{i_1+1},\theta_{i_4},\theta_{i_2},\theta_{i_2+1},\theta_{i_3},\theta_{i_3+1},\theta_{i_4+1},\theta_{j_2},\theta_{i_5},\theta_{i_5+1},\theta_{i_6},\theta_{i_6+1},\theta_{j_3}\]
and the representations correspond to the following diagrams:
\begin{center}
\begin{tikzpicture}
\filldraw (-0.5,-6) circle (2pt) -- ++(0.5,0)circle (2pt) -- ++(0.5,0)circle (2pt) --++(0.5,0.5) circle (2pt)--++(0.5,-0.5)circle (2pt)--++(0.5,0.5) circle (2pt)--++(0.5,0.5) circle (2pt)--++(0.5,-0.5) circle (2pt)
--++(0.5,0.5) circle (2pt)--++(0.5,-0.5) circle (2pt)--++(0.5,-0.5) circle (2pt)--++(0.5,0) circle (2pt)--++(0.5,0.5) circle (2pt)--++(0.5,-0.5) circle (2pt)--++(0.5,0.5) circle (2pt)--++(0.5,-0.5) circle (2pt)-- ++(0.5,0) circle (2pt);
\draw (3.5,-7) node[align= center] {\textit{Motzkin path representation}};

\draw (-0.25,-5.85) node {{\tiny $j_0$}};
\draw (0.25,-5.85) node {{\tiny $j_1$}};
\draw (4.75,-5.85) node {{\tiny $j_2$}};
\draw (7.25,-5.85) node {{\tiny $j_3$}};

\draw (0.65,-5.67) node {{\tiny $i_1$}};
\draw (1.65,-5.67) node {{\tiny $i_4$}};
\draw (2.15,-5.17) node {{\tiny $i_2$}};
\draw (3.15,-5.17) node {{\tiny $i_3$}};
\draw (5.15,-5.67) node {{\tiny $i_5$}};
\draw (6.15,-5.67) node {{\tiny $i_6$}};

\filldraw (10.5,-5) circle (2pt);
\filldraw (11.5,-5) circle (2pt);
\filldraw (11.5,-5) circle (2pt) -- ++ (-0.5,-0.5) circle (2pt);
\filldraw (11.5,-5) circle (2pt) -- ++ (0.5,-0.5) circle (2pt) -- ++(0.5,-0.5) circle (2pt);
\filldraw (12,-5.5) -- ++ (-0.5,-0.5) circle (2pt) ;
\filldraw (13,-5.5) circle (2pt) -- ++ (0.5,0.5) circle (2pt) -- ++(0.5,-0.5) circle (2pt);
\draw (12,-7) node[align= center] {\textit{Forest representation}};

\draw (10.25,-4.5) node {{\tiny $j_0$}};
\draw (10.75,-4.5) node {{\tiny $j_1$}};
\draw (12.75,-4.5) node {{\tiny $j_2$}};
\draw (14.25,-4.5) node {{\tiny $j_3$}};

\draw [dashed] (10.25,-4.75) -- ++ (0,-1.4);
\draw [dashed] (10.75,-4.75) -- ++ (0,-1.4);
\draw [dashed] (12.75,-4.75) -- ++ (0,-1.4);
\draw [dashed] (14.25,-4.75) -- ++ (0,-1.4);

\draw (11.15,-5.17) node {{\tiny $i_1$}};
\draw (11.65,-5.67) node {{\tiny $i_2$}};
\draw (13.15,-5.17) node {{\tiny $i_5$}};

\draw (11.85,-5.17) node {{\tiny $i_4$}};
\draw (12.35,-5.67) node {{\tiny $i_3$}};
\draw (13.85,-5.17) node {{\tiny $i_6$}};

\end{tikzpicture}
\end{center}
\end{ex}
\bi
Note that while we still keep track of the primary parts as the horizontal moves in Motzkin paths, they \textit{vanish} in oriented rooted forests. However, we can manage to record all information of the partition $\nu$ in the oriented rooted forest by weighting the edges with the corresponding secondary part, while recording each primary part on the root to its right. The optimal forbidden pattern ending by a primary part will then be represented by a weighted oriented rooted tree. 
\m
Let us now consider the edges of the roots. In terms of \textit{Motzkin paths}, they exactly correspond to the meeting points with the horizontal axis. For the final positions, they correspond to the elements
$i\in I$ that satisfy $\theta_{i+1}<\theta_{i'}$ for all $i'>i$.
By \Prp{pr4}, in the case where the Bridge is not a element of $J$, it then corresponds to some root's edge. 
This means that the study of optimal forbidden patterns not ending by a primary part can be reduced to the study of \textit{planted trees} weighted by the secondary parts. The planted trees are indeed in bijection with the oriented trees with one fewer edge, and the problem then becomes the same as the previous case.
\bi To conclude, we see that we can reduce the study of the optimal forbidden patterns to the study of weighted oriented rooted trees, and this give a new perspective to investigate on a precise enumeration of these patterns.
\bi
\section{Proofs of the technical lemmas}
\subsection{Proof of \Lem{lem1}}
To prove \eqref{oe}, we observe that, for any $(l_p,k_{q})\in\Pp\times \Sc$,  by \eqref{Ordd}, 
\[l_p\not \gg k_{q} \Longleftrightarrow  l_p\not\succeq (k+1)_{q}\,,\]
and
\begin{align*}
(k+1)_{q}\gg (l-1)_p&\Longleftrightarrow (k+1)_{q}\succ l_p\\
 &\Longleftrightarrow (k+1)_{q}\not\preceq l_p\,\cdot
\end{align*}
To prove \eqref{eo}, we first remark that, by \eqref{abba},  $\alpha(k_{q})=\beta((k+1)_{q})$. We then obtain by \eqref{Ordd} that
\[l_p \gg \alpha(k_{q}) \Longleftrightarrow (l-1)_p \succeq \alpha(k_{q})\]
and  
\begin{align*}
\beta((k+1)_{q})\not\succ(l-1)_p&\Longleftrightarrow \alpha(k_{q})\not\succ(l-1)_p\\
&\Longleftrightarrow \alpha(k_{q})\preceq (l-1)_p\,\cdot
\end{align*}
\subsection{Proof of \Lem{lem2}}
Let us consider $\min\{k-l: \beta(k_{p})\succ \alpha(l_{q})\}$. 
An abstract way to show \eqref{gam} is to use the explicit formula 
$$
\Delta(p,q) = \chi(r\leq y)+\chi(r\leq x)\chi(s\leq y)
$$
with $q=a_xa_y$ and $p=a_ra_s$. Recall that $x<y$ and $r<s$. In fact, by considering \eqref{orD} and the lexicographic order $\succ$, one can check that the minimal difference between the secondary colors $p$ and $q$ for the relation $\triangleright$ is
\[1+\chi(p\leq q) = 1+\chi(r<x)+\chi(r=x)\chi(s\leq y)\,\cdot\]
By definition \eqref{special},  
\[\chi((p,q)\in \Ccc) = \chi(r>y)+\chi(r<x)\chi(s>y)\]
so that, by \eqref{Ordd}, the minimal difference between the secondary colors $p$ and $q$ for the relation $\gg$ is given by 
\[1+\chi(r<x)+\chi(r=x)\chi(s\leq y)-\chi((p,q)\in \Ccc) = \chi(r\leq y)+\chi(r\leq x)\chi(s\leq y)\,\cdot\]
Now, we reason first according to the parity of $k$. For $k=2u$, we have by \eqref{ab} that $\alpha(k_p)=u_{a_s}$ and $\beta_{k_p}=u_{a_r}$.
In order to minimize $k-l$, $\alpha(l_q)$ and $\beta(l_q)$ have to be the greatest primary parts with color $a_x$ and $a_y$ smaller than $u_{a_r}$ in terms of $\succ$, 
so that, by  \eqref{lex}, they must necessarily be the parts $(u-\chi(r\leq x))_{a_x}$ and $(u-\chi(r\leq y))_{a_y}$. We then obtain the difference
\[\chi(r\leq x)+\chi(r\leq y)\,\cdot\]
With the same reasoning for $k=2u+1$,  since $\alpha(k_p)=(u+1)_{a_r}$ and $\beta(k_p)=u_{a_s}$, we then reach the difference
\[1+\chi(s\leq x)+\chi(s\leq y) \geq \chi(r\leq y)+\chi(s\leq y)\cdot\]
Since the mimimum is reached either for $k$ even or $k$ odd, we then have that 
\[\min\{k-l: \beta(k_{p})\succ \alpha(l_{q})\}\geq \min\{\chi(r\leq y)+\chi(s\leq y),\chi(r\leq x)+\chi(r\leq y)\}\,\cdot\]
We finally consider the case $l=2v$, so that $\alpha(l_q)=v_{a_y}$ and $\beta(l_q)=v_{a_x}$, and to minimize $k-l$, $\alpha(k_p)$ and $\beta(k_p)$ have to be the smallest primary parts with color $a_r$ and $a_s$ greater than $v_{a_y}$ in terms of $\succ$, 
so that they must necessarily be the parts $(v+\chi(r\leq y))_{a_r}$ and $(v+\chi(s\leq y))_{a_s}$. We obtain the difference $\chi(r\leq y)+\chi(s\leq y)\,$
and then the inequality 
\[\min\{k-l: \beta(k_{p})\succ \alpha(l_{q})\}\leq \min\{\chi(r\leq y)+\chi(s\leq y),\chi(r\leq x)+\chi(r\leq y)\}\cdot\]
Since $\min\{\chi(r\leq y)+\chi(s\leq y),\chi(r\leq x)+\chi(r\leq y)\}=\chi(r\leq y)+\chi(r\leq x)\chi(s\leq y)$, we then have \eqref{gam}.
\\\\To prove \eqref{sw1}, 
we have by \eqref{abba} that $\alpha((l-1)_q)=\beta(l_q)$. Since $\beta(k_p)\succ \beta(l_q)=\alpha((l-1)_q)$, this then implies by \eqref{gam} that $k_p\gg (l-1)_q$, and this is equivalent to $(k+1)_p\gg l_q$.
\\\\Let us now suppose that $k-l\geq \Delta(p,q)$. We just saw that this minimum value was reached at $k$ or $k-1$. Then if we do not have $\beta(k_{p})\succ \alpha(l_{q})$, we necessarily have $\beta((k-1)_{p})\succ \alpha((l-1)_{q})=\beta(l_{q})$ by \eqref{abba}. Moreover, by \eqref{Ordd}, we have
\[\beta(k_{p})\not\succ \alpha(l_{q})\Longleftrightarrow \alpha(l_{q})+1\gg \alpha((k-1)_{p} )\,,\]
so that we obtain \eqref{sw}. Suppose now that we have $k-l= \Delta(p,q)$. If $\beta(k_{p})\succ \alpha(l_{q})$ then we necessarily have 
\[\beta(k_{p})\succ \alpha(l_{q})\succ \beta(l_q)\succeq \beta(k_p)-1\,\cdot\]
In fact, we saw that the minimal difference is obtained when the primary parts $\alpha(l_{q})$ and $\beta(l_q)$ are the closest possible to $\beta(k_{p})$ with the primary colors of $q$. 
If $\beta(k_{p})\not\succ \alpha(l_{q})$, since we have $\beta(l_q)+1\succ\alpha(l_{q})$, we also have 
\[\beta(l_q)+1\succ\alpha(l_{q})\succeq \beta(k_{p})\,\cdot\]
In both cases, the relation \eqref{chaine} holds. If we have that $k-l-1\geq \Delta(p,q)$, then we necessarily have by \eqref{abba} that 
\[\beta(k_q)\succ \beta{(l+1)_{q}} = \alpha(l_q)\,\cdot\]
\subsection{Proof of \Lem{lem3}}
For any $\nu =(\nu_1,\ldots,\nu_t)\in \Eee$ and any $i\in [1,t-2]$, we have 
$$\nu_i\triangleright \cdots \triangleright\nu_{j}\,\cdot$$
By \eqref{Ord}, we have 
\[\nu_i\succeq \nu_{i+1}+1 \succeq \cdots\succeq \nu_{j}+j-i \Rightarrow \nu_i \succeq \nu_{j}+j-i\,,\]
with a strict inequality as soon as we have $\nu_i$ or $\nu_j$ in $\Sc$, and we thus obtain \eqref{ee}.
\subsection{Proof of \Lem{lem4}}
By definition, for all $i\in I$, $\Br(i)\in([i,j)\cap I)\cup \{j\}$, for $j = \min(i,p+2s+1]\cap J$.
This means that, for any $I \ni i'>j$, $$\Br(i')\geq i'>j\geq \Br(i)\,\cdot$$
Let us now consider the function $\Br$ on $[i,j)\cap I$. 
It is obvious that, for all $i'\in [i,j)\cap I$, we have 
$j=\min(i',p+2s+1]\cap J$. 
\begin{itemize}
 \item If $\Br(i)=i$, then $$\Br(i')\geq i'\geq i = \Br(i)\,\cdot$$
 \item If we have $\Br(i)=j$, then by \eqref{br1}
$$\nu_{u+1}\not \succ \nu_j+\frac{j-u}{2}-1$$
for all $u\in[i,j)\cap I$, and since $[i',j)\subset [i,j)$, we also obtain that $\Br(i')=j$.
\item Finally, if  $\Br(i)\in (i,j)\cap I$, then  we have either $j> i'\geq \Br(i)$, or
$i\leq i'<\Br(i)$. In the first case, we obtain 
$$\Br(i')\geq i' \geq \Br(i)\,\cdot$$
In the second case, we observe that, by \eqref{br2} and \eqref{br3},
$$\nu_{u+1}\not \succ \nu_{\Br(i)}+\frac{\Br(i)-u}{2}-1$$
for all $u\in[i,\Br(i))\cap I$, and in particular for all  $u\in[i',\Br(i))\cap I$. Thus, if $\Br(i')\neq j$, we necessarily have by \eqref{br3} that $\Br(i')\geq \Br(i)$.
\end{itemize}
In any case, we have that $\Br(i')\geq \Br(i)$. 
\\\\Let us now suppose that $\Br(i)\in I$. If $\Br(i)=i$, then $\Br(\Br(i))=i=\Br(i)$. Otherwise, let us assume that 
$\Br(\Br(i))>\Br(i)$.
\begin{itemize}
 \item If $\Br(\Br(i))=j$, this means that 
 $$\nu_{u+1}\not \succ \nu_j+\frac{j-u}{2}-1 \Longleftrightarrow  \nu_j+\frac{j-u}{2}-1 \succeq \nu_{u+1}$$
 for all $u\in[\Br(i),j)\cap I$. Since $\nu_{\Br(i)}$ and $\nu_{\Br(i)+1}$ have different primary colors and  are consecutive with respect to $\succ$, we then obtain that
 $\nu_{\Br(i)+1}+1\succ\nu_{\Br(i)}$, so that
 $$\nu_j+\frac{j-\Br(i)}{2} \succeq \nu_{\Br(i)}\,\cdot$$
 We also have by \eqref{br2} and \eqref{br3} that 
 $$\nu_{u+1}\not \succ \nu_{\Br(i)}+\frac{\Br(i)-u}{2}-1 \Longleftrightarrow  \nu_{\Br(i)}+\frac{\Br(i)-u}{2}-1 \succeq \nu_{u+1}$$
for all $u\in[i,\Br(i))\cap I$, so that
$$\nu_j+\frac{j-u}{2}-1 \succeq \nu_{u+1} \Longleftrightarrow  \nu_{u+1}\not \succ \nu_j+\frac{j-u}{2}-1\,\cdot$$
We then conclude by \eqref{br1} that $\Br(i)=j$, which contradicts the fact that $\Br(i)\notin J$.
\item For $\Br(\Br(i))>\Br(i)$, we reason exactly in the same way, by replacing $j$ by $\Br(\Br(i))$, and we obtain by \eqref{br3} that 
$\Br(i)\geq \Br(\Br(i)) > \Br(i)$.
\end{itemize}
To conclude, we necessarily have that $\Br(\Br(i))=\Br(i)$ for $\Br(i)\in I$.
\subsection{Proof of \Lem{lem5}}
 By \eqref{Ord}, \eqref{Ordd} and the fact that all the pairs in $\Ccc$ have distinct secondary colors, we have that for any $u\in [i,i')\cap I$
 \[
 \nu_{u+2}+\nu_{u+3}+1 \succ \nu_{u}+\nu_{u+1}\succ \nu_{u+2}+\nu_{u+3}\,,
\]
so that we obtain \eqref{X} recursively.
\subsection{Proof of \Lem{lem6}}
By \eqref{chaine} of \Lem{lem2}, we have for any $u\in [i,i')\cap I$ that
\[\nu_{u+3}+1\succeq \nu_{u+1}\,,\]
so that we recursively have 
\[\nu_{i'+1}+\frac{i'-u}{2}\succeq \nu_{u+1}\,\cdot \]
By \eqref{cross}, if we suppose that $\Br(i')>i'$, we then have 
\[\nu_{i'+1}\not \succ \nu_{\Br(i')} +\frac{\Br(i')-i'}{2}-1 \Longleftrightarrow \nu_{\Br(i')} +\frac{\Br(i')-i'}{2}-1 \succeq  \nu_{i'+1}\,,\]
and we obtain that 
$$
 \nu_{\Br(i')} +\frac{\Br(i')-u}{2}-1 \succeq  \nu_{u+1} \Longleftrightarrow \nu_{u+1}\not \succ \nu_{\Br(i')} +\frac{\Br(i')-u}{2}-1
$$
for all $u\in [i,i')\cap I$. Since the previous relation is also true for for all $u\in [i',\Br(i'))\cap I$, by \eqref{br2} and \eqref{br3}, we have that $\Br(i')\leq \Br(i)$. 
Finally, by \Lem{lem4}, the fact that $\Br$ is non-decreasing on $I$ gives that $\Br(i')= \Br(i)$.
\subsection{Proof of \Lem{lem7}}\label{l7}
We can notice that for any pair $(k_p,l_q)$ of secondary parts different from a pattern $cd \rightarrow ab$ and that satisfies $k_p\gg l_q$, we can always find  an integer $h$ such 
\begin{equation}\label{inser}
 k_p\succ h_{cd} \succeq l_p\,\cdot
\end{equation}
This is obvious when $(p,q)\notin \Ccc$. In fact,
\[k_p\gg l_q \Longleftrightarrow k_p\succ (l+1)_q \succ l_q\]
and we can find a unique $h_{cd}$ satisfying $(l+1)_q \succ h_{cd} \succeq l_q$. Note that if $q=cd$, we then have at least two possible integers $h = l,l+1$. 
Suppose now that $(p,q)\in \Ccc$. Recall that here, we set $\{a_1<a_2<a_3<a_4<a_5\} = \{a<b<c<d<e\}$. We then have two kinds of pairs.
\begin{itemize}
 \item First, we have the pairs $(a_ia_j,a_ka_l)$ with $5\geq j>i>l>k \geq 1$, so that $i\geq 3$ and $l\leq 3$. Thus, $a_ia_j\geq cd$, while $a_ka_l\leq bc<cd$.
 If $a_ia_j\neq cd$, we have that $a_ia_j>cd$, and then 
 \[k_{a_ia_j}\succ k_{cd}\succ k_{a_ka_l}\]
 and the property \eqref{inser} is true for $(k_p,l_q) = (k_{a_ia_j}, k_{a_ka_l})$.
 \item The second kind of pair is of the form $(a_ia_j,a_ka_l)$ with $ 5\geq j>l>k>i\geq 1$, so that $l\leq 4$ and $i\leq 2$. Thus, $a_ia_j\leq be<cd$, while $a_ka_l\leq cd$.
 We have that $a_ia_j>cd$, and then 
 \[(l+1)_{a_ia_j}\succ l_{cd}\succeq l_{a_ka_l}\]
 and the property \eqref{inser} is true for $(k_p,l_q) = ((l+1)_{a_ia_j}, l_{a_ka_l})$.
\end{itemize}
Let us now consider a pattern of secondary parts $(\nu_1,\nu_2,\cdots,\nu_{2s-1},\nu_{2s},\nu_{2s+1},\nu_{2s+2})$ with no moves $\rightarrow cd \rightarrow$. 
\m
If $\nu_1+\nu_2,\nu_3+\nu_4 \neq cd \rightarrow ab$,
we recursively show on $1\leq i\leq s$ that there exists $h$ such that 
\begin{equation}\label{inser2}
 \nu_1+\nu_2\succ (h+i-1)_{cd}\succ h_{cd}\succeq \nu_{2i+1}+\nu_{2i+2}\,\cdot
\end{equation} 
In fact, by \eqref{inser}, the previous statement holds for $i=1$. Suppose now it holds by induction up to $i$. 
If $\nu_{2i+1}+\nu_{2i+2},\nu_{2i+3}+\nu_{2i+4} \neq cd \rightarrow ab$, then by \eqref{inser}, we have $h'$ such that
\[h_{cd}\succeq \nu_{2i+1}+\nu_{2i+2}\succ h'_{cd}\succeq \nu_{2i+3}+\nu_{2i+4}\,\cdot\]
We thus have $h>h'$, and by choosing $h'$, we obtain 
\[\nu_1+\nu_2\succ (h'+i)_{cd}\succ h'_{cd}\succeq \nu_{2i+3}+\nu_{2i+4}\,\cdot\]
If $\nu_{2i+1}+\nu_{2i+2},\nu_{2i+3}+\nu_{2i+4} = cd \rightarrow ab$, we then necessarily have that $\nu_{2i-1}+\nu_{2i}\triangleright\nu_{2i+1}+\nu_{2i+2}$ not to have the moves $\rightarrow cd \rightarrow$.
Therefore, by setting $h_{cd}=\nu_{2i+1}+\nu_{2i+2}$, we have that $\nu_{2i-1}+\nu_{2i}\succ (h+1)_{cd}$. 
Since the statement \eqref{inser2} also holds for $i-1$, there exists $h'$  such that
\[\nu_1+\nu_2\succ (h'+i-2)_{cd}\succ h'_{cd}\succeq \nu_{2i-1}+\nu_{2i}\,\cdot\]
We can then remark that $h'\geq h+2$, and we conclude that
\[\nu_1+\nu_2\succ (h+i)_{cd}\succ h_{cd}\succeq \nu_{2i+3}+\nu_{2i+4}\,\cdot\]
We have thus proved the statement \eqref{inser2} when the head is different from $cd \rightarrow ab$. 
\m
If the head is equal to $cd \rightarrow ab$, we then apply \eqref{inser2} on 
the pattern $(\nu_3,\nu_4,\cdots,\nu_{2s-1},\nu_{2s},\nu_{2s+1},\nu_{2s+2})$, and we obtain that there exists $h$ such that
$$
 \nu_1+\nu_2 \succ \nu_2+\nu_3\succ (h+i-2)_{cd}\succ h_{cd}\succeq \nu_{2i+1}+\nu_{2i+2}
$$
so that $\nu_1+\nu_2 \succeq (h+i-1)_{cd}$.
In both cases, we always have that $\nu_1+\nu_2 -s+1\succeq \nu_{2s+1}+\nu_{2s+2}$ so that
$$
 \overline{\nu_1+\nu_2 -s+1}\succeq \overline{\nu_{2s+1}+\nu_{2s+2}}\,\cdot
$$
By definition \eqref{kill}, $(\nu_1,\nu_2,\cdots,\nu_{2s-1},\nu_{2s},\nu_{2s+1},\nu_{2s+2})$ cannot be a shortcut. Since a pattern that does not contain the moves $\rightarrow cd \rightarrow$
does not have any subpattern that contains these moves, we then obtain our lemma.
\section{Proofs of the propositions}
\subsection{Proof of \Prp{pr1}}\label{pp1}
Let $\la = (\la_1,\ldots,\la_t)$ be a partition in $\Od$. Let us set $c_1,\ldots,c_t$ to be the primary colors of the parts $\la_1,\ldots,\la_t$.
\paragraph{\underline{\textbf{First} \Soo}} Now consider the first troublesome pair $(\la_i,\la_{i+1})$ at \So in $\Phi$. We then set 
\begin{align*}
\delta^1&=\emptyset\\
\gamma^1&=\la_1\gg\cdots\gg \la_i\,,\\
\mu^1&=\la_{i+1}\succ \cdots\succ\la_t\,\cdot
\end{align*}
The first resulting secondary part is $\la_i+\la_{i+1}$.
\paragraph{\underline{\textbf{First iterations of} \Stt}}
\begin{itemize}
\item If there is a part $\la_{i+2}$ after $\la_{i+1}$, we have that
\begin{align*}
\la_i+\la_{i+1}-\la_{i+2}&= \chi(c_i<c_{i+1})+2\la_{i+1}-\la_{i+2}& \text{by \eqref{nog}}\\
&\geq \chi(c_i<c_{i+1})+2\chi(c_{i+1}\leq c_{i+2})+\la_{i+2}&\text{by \eqref{lex}}\\
&\geq 1+\chi(c_i\leq c_{i+2})+\chi(c_{i+1}\leq c_{i+2})\,\cdot
\end{align*}
Since by \eqref{orD}, we have that $c_i> c_{i+2}$ and $c_{i+1}> c_{i+2}$ implies $c_ic_{i+1}>c_{i+2}$, we then have that
$\la_i+\la_{i+1}-\la_{i+2}\geq 1+\chi(c_ic_{i+1}\leq c_{i+2})$, and we conclude that $\la_i+\la_{i+1}\gg \la_{i+2}$. This means that if there is no iteration of \St (which happens if $i=1$ or $\la_{i+1}\gg \la_i+\la_{i+1}$), then the secondary part
is well-ordered with the primary part to its right.\\
\item The primary parts of $\gamma^{1}$ are well-ordered by $\gg$.
By \eqref{Ordd} and  \eqref{aj}, we have that for any $j<i$, 
if $\la_i+\la_{i+1}$ crosses
$\la_j$ after $i-j$ iterations of \Stt, we then have  by \eqref{oe} that
$$
(\la_i+\la_{i+1}+i-j)\gg (\la_j-1)\gg\cdots\gg\la_{i-1}-1\,\cdot
$$
\item We also have by \eqref{Ordd} that 
\begin{align*}
\la_{i-1}\gg\la_i\succ\la_{i+1}\succ \la_{i+2} &\Longrightarrow \la_{i-1}-1\succeq \la_i\succ\la_{i+1}\succ \la_{i+2}\\
&\Longrightarrow \la_{i-1}-1\succ\la_{i+2}\,\cdot
\end{align*}
\end{itemize}
If we can no longer apply \St after $i-j$ iterations,  we then obtain (even when there is no crossing which means that $j=i$)
$$
\la_1\gg\cdots\gg\la_{j-1}\gg(\la_i+\la_{i+1}+i-j)\gg (\la_j-1)\gg\cdots\gg\la_{i-1}-1\succ \la_{i+2}\succ\cdots \succ \la_t\cdot
$$
\paragraph{\underline{\textbf{Second} \Soo}} Now, by applying \So for the second time, we see that the next troublesome pair is either $\la_{i-1}-1,\la_{i+2}$, or  $\la_{i+2+x},\la_{i+3+x}$ for some $x\geq 0$.
\begin{itemize}
\item If $\la_{i-1}-1\not \gg\la_{i+2}$, this means that $(\la_{i-1}-1,\la_{i+2})$ is a troublesome pair, and  \So occurs there. We then set 
\begin{align*}
\delta^2&=\la_1\gg\cdots\gg\la_{j-1}\gg(\la_i+\la_{i+1}+i-j)\\
\gamma^2&=(\la_j-1)\gg\cdots\gg\la_{i-1}-1\\
\mu^2&=\la_{i+2}\succ\cdots \succ \la_t\,\cdot
\end{align*}
By \eqref{sw1}, we have that 
$(\la_i+\la_{i+1}+1)\gg (\la_{i-1}+\la_{i+2}-1)$.
Then, even if $(\la_{i-1}+\la_{i+2}-1)$ crosses the primary parts $(\la_j-1)\gg\cdots\gg\la_{i-2}-1$ after $i-j-1$ iterations of \Stt, by \eqref{Ordd}, we will still have that 
\[(\la_i+\la_{i+1}+i-j)\gg(\la_{i-1}+\la_{i+2}+i-j-2)\,\cdot\]
We have before the third application of \So that 
\begin{align*}
\delta^3&=\la_1\gg\cdots\gg(\la_i+\la_{i+1}+i-j)\gg \la_j-1\gg\cdots\gg\la_{j'-1}-1\gg(\la_{i-1}+\la_{i+2}-2+i-j')\\
\gamma^3,\mu^3&=\la_{j'}-2\gg\cdots\gg\la_{i-2}-2\gg\la_{i+3}\succ\cdots\succ \la_t \,,
\end{align*}
for some $i-1 \geq j' \geq j$. Observe that $\mu^3$ is the tail of the partition $\la_{i+3}\succ\cdots\succ \la_t$.\\
\item If $\la_{i-1}-1 \gg\la_{i+2}$, then the next troublesome pair appears at 
$\la_{i+2+x},\la_{i+3+x}$ for some $x\geq 0$, and it forms the secondary part $\la_{i+2+x}+\la_{i+3+x}$.
 We then set 
\begin{align*}
\delta^2&=\la_1\gg\cdots\gg\la_{j-1}\gg(\la_i+\la_{i+1}+i-j)\\
\gamma^2&=(\la_j-1)\gg\cdots\gg\la_{i-1}-1\gg \la_{i+2}\gg \cdots \gg\la_{i+2+x}\\
\mu^2&=\la_{i+x+3}\succ\cdots \succ \la_t\,\cdot
\end{align*}
We also have 
\begin{equation*}
\la_i\succ\la_{i+1}\succ\la_{i+2}\gg \cdots \gg \la_{i+2+x}\succ\la_{i+3+x}\,\cdot
\end{equation*}
By \eqref{Ordd}, we can easily check that
\[
\la_i\succ\la_{i+1}\succ\la_{i+2} \succeq \la_{i+2+x}+x\succ\la_{i+3+x}+x
\]
so that, by \eqref{gam}, 
\[
(\la_i+\la_{i+1})\gg (\la_{i+2+x}+\la_{i+3+x}+2x)\,\cdot
\]
This means by \eqref{Ordd} that, 
\begin{equation*}
(\la_i+\la_{i+1})\gg (\la_{i+2+x}+\la_{i+3+x}+x)
\end{equation*}
and, as soon as $x\geq 1$, by \eqref{Ord}
\[
(\la_i+\la_{i+1})\triangleright (\la_{i+2+x}+\la_{i+3+x}+x)\, \cdot
\]
We then obtain that, even if the secondary part $\la_{i+2+x}+\la_{i+3+x}$ crosses,  after $x+i-j$ iterations of \Stt, the primary parts
\[\la_j-1\gg\cdots\gg (\la_{i-1}-1)\gg \la_{i+2}\gg \cdots \gg \la_{i+1+x}\,,\]
we still have
\[(\la_i+\la_{i+1}+i-j)\gg(\la_{i+2+x}+\la_{i+3+x}+x+i-j)\,\cdot\]
However, as soon as $x\geq 1$, we directly have 
$$
(\la_i+\la_{i+1}+i-j)\triangleright(\la_{i+2+x}+\la_{i+3+x}+x+i-j)\,\cdot
$$
We thus obtain before the third application of \So that, 
\begin{align*}
\delta^3&=\la_1\gg\cdots\gg(\la_i+\la_{i+1}+i-j)\gg\cdots\gg (\la_{i+2+x}+\la_{i+3+x}+x+i-j')\\
\gamma^3,\mu^3&=\cdots\succ\la_{i+4+x}\succ\cdots\succ\la_t\,,
\end{align*}
for some $i+x \geq j'\geq  j$. Observe that $\mu^3$ is the tail of the partition $\la_{i+x+3}\succ\cdots\succ \la_t$. Moreover, we have the following inequalities 
\begin{itemize}
 \item $\la_{j'-1}-1\gg(\la_{i+2+x}+\la_{i+3+x}+x+i-j')\gg\la_{j'}-2$ for $x-1\geq j'\geq j$,
 \item $\la_{i-1}-1\gg(\la_{i+2+x}+\la_{i+3+x}+x)\gg\la_{i+2}-1$ for $j'=i$,
 \item $\la_{j'+1}\gg(\la_{i+2+x}+\la_{i+3+x}+x+i-j')\gg\la_{j'+2}-1$ for $x+i\geq j'\geq i+1$.
\end{itemize}
Observe that the partition to the left of $\la_{i+x+4}$ is well-ordered by $\gg$, so that $\mu^3$ is the tail of the partition $\la_{i+x+4}\succ\cdots\succ \la_t$.
\end{itemize}
In both cases, the conditions in the proposition are satisfied. In fact, the partition $\delta^2$ belongs to $\E$ and is the head of the partition $\delta^{3}$ that also belongs to $\E$, and the fourth statement is true.
By comparing $\mu^1,\mu^2$ (and $\mu^3$), the third statement is true since $\mu^2$ is a strict tail of $\mu^1$.
The two first statements directly come from the way we established the sequences, and the fact that $s(\delta^u)\gg g(\gamma^u)$ is true for $u=2,3$.
\bi
By induction,  we only apply  \So once to the troublesome pair $(s(\gamma^{u}),g(\mu^u))$ in the partition $\emptyset,\gamma^u,\mu^u\in \Od$  and  then some iterations of \Stt. We then obtain some sequence 
$\delta'^u,\gamma'^u,\mu'^u$ with the same form as $(\delta^2,\gamma^2,\mu^2)$, and we set the triplet $(\delta^{u+1},\gamma^{u+1},\mu^{u+1}) = ((\delta^u,\delta'^u),\gamma'^u,\mu'^u)$.
Note that the sequence $\delta^u,\delta'^u$ is indeed a partition in $\E$ by considering the process from the $(u-1)^{th}$ \Soo. 
\\Then, the sequence $(\delta^u,\gamma^u,\mu^u)$ becomes the sequence $(\delta^{u+1},\gamma^{u+1},\mu^{u+1})$ after applying 
\So once to the troublesome pair $(s(\gamma^u),g(\mu^u))$, and some iterations of \St by crossing the secondary part $s(\gamma^u)+g(\mu^u)$ with some primary parts of $\gamma^u\setminus \{s(\gamma^u)\}$.
\Prp{pr1} follows naturally.
\subsection{Proof of \Prp{pr2}}\label{pp2}
Let us consider $\E\ni \nu=(\nu_1,\ldots,\nu_t)$. If we suppose that the secondary parts of $\nu$ are $\nu_{i_1},\ldots,\nu_{i_S}$ for $i_1<\cdots<i_S$,   
we can then set for all $v\in [1,S]$
$$
 \delta^v = \nu_1\gg \cdots\gg \nu_{i_{S+1-v}}
$$
and $\delta^{S+1}=\emptyset$.
By setting $i=i_S$, we also have that
\begin{align*}
\delta^1&=\nu_1\gg\cdots\gg\nu_i\\
\gamma^1 &= \nu_{i+1}\gg \cdots\gg\nu_t\\
\mu^1&=\emptyset\,\cdot
\end{align*}
\begin{itemize}
\item If $\nu_i$ crosses all the primary parts up to $\nu_t$ after iterating \Soo, we have that 
\[\beta(\nu_i-t+i+1)\not\succ \nu_t \,\cdot\]
But, we also have that
\[\nu_i\triangleright \cdots \triangleright \nu_t\]
since $\nu_{i+1},\ldots,\nu_t$ are all primary parts. We thus have by \Lem{lem3} that
\[\nu_i-t+i\succ \nu_t\,,\]
so that, if $\nu_i-t+i$ has size $1$, then $\nu_t$ has also size $1$ and a color smaller than the color of $\nu_i$. But by \eqref{half1} and \eqref{orD},
we necessarily have that $\beta(\nu_i-t+i+1)$ has size $1$ and a color greater than 
the color of $\nu_i$. We then obtain by \eqref{cons1} that 
\[\beta(\nu_i-t+i+1)\succ \nu_i-t+i\succ \nu_t\,,\]
and we do not cross $\nu_i-t+i+1$ and $\nu_t$, which is aburd by assumption.
This means that in any case after crossing, we still have that the secondary part size is greater than $1$, so that after splitting, its upper and lower halves stay in $\Pp$.\\ 
\item if $\nu_i$ crosses all the primary parts up to $\nu_j$ after iterating \So and stops before $\nu_{j+1}$, we then set 
\begin{align*}
\delta^2&=\nu_1\gg\cdots\gg\nu_{i_{S-1}}\\
\gamma^2 &= \nu_{i_{S-1}+1},\ldots,\nu_{i_{S}-1},\nu_{i_{S}+1}+1,\ldots, \nu_{j}+1, \alpha(\nu_{i_S}+i_S-j)\\
\mu^2&=\beta(\nu_{i_S}+i_S-j),\nu_{j+1},\ldots,\nu_t\,\cdot
\end{align*}
The statements of \Prp{pr2} are then satisfied.
\item Suppose now that $(\delta^{v},\gamma^{v},\mu^{v})$ satisfies the conditions in \Prp{pr2}. Note that $s(\gamma^{v}),g(\mu^{v})$ are respectively the upper and the lower halves after the 
splitting of the secondary part coming from $\nu_{i_{S+2-v}}$.
We also have by \eqref{Ordd} that 
\[\nu_{i_{S+1-v}}\gg \nu_{i_{S+1-v}}\gg\cdots\gg \nu_{i_{S+2-v}}\gg\nu_{i_{S+2-v}} \Longrightarrow \, \nu_{i_{S+1-v}}+i_{S+1-v}-i_{S+2-v}+1\gg\nu_{i_{S+2-v}}\]
since the parts between these secondary parts are  primary parts. By \Lem{lem2}, even if these secondary parts meet after crossing 
the  primary parts, the splitting of the part coming from $\nu_{i_{S+1-v}}$ will then occur either before the upper half or between the upper and 
the lower halves obtained after the splitting of $\nu_{i_{S+2-v}}$. Thus the splitting of $s(\delta^v)=\nu_{i_{S+1-v}}$ occurs before $g(\mu^v)$.
By taking $s(\gamma^{v+1}),g(\mu^{v+1})$ as the upper and the lower halves of the split secondary part coming from $\nu_{i_{S+2-v}}$, we thus obtain a sequence $(\delta^{v+1},\gamma^{v+1},\mu^{v+1})$ such that $\mu^{v}$ is the strict tail of $\mu^{v+1}$.
Note that these sequences also satisfy the other statements.  
\end{itemize}
\subsection{Proof of \Prp{pr10}} 
Note that \So of $\Phi$ is reversible by the splitting in \St of $\Psi$. Let us now show that iterations of \St of $\Phi$ are also reversible by iterations of \So in $\Psi$.
\\\\We saw in the proof of \Prp{pr1} in \eqref{pp1} that for any $u\geq 1$, the sequence $(\delta^u,\gamma^u,\mu^u)$ becomes the sequence $(\delta^{u+1},\gamma^{u+1},\mu^{u+1})$ after applying
\So once to the troublesome pair $(s(\gamma^u),g(\mu^u))$, and some iterations of \St by crossing the secondary part $s(\gamma^u)+g(\mu^u)$ with some primary parts of $\gamma^u\setminus \{s(\gamma^u)\}$.
Without loss of generality, let us set 
\begin{align*}
\gamma^u &= \pi_1\gg \cdots \gg \pi_i\\
\mu^u&=\pi_{i+1}\succ \cdots\succ \pi_r\,
\end{align*}
and suppose that the secondary parts $\pi_i+\pi_{i+1}$ crossed the primary parts $\pi_j\gg \cdots \gg \pi_{i-1}$. Since $\gamma^u\in \E\cap \Od \subset \Eee$, by \Lem{lem3} and \eqref{aj}, we have that 
\[\pi_j\gg \pi_i+i-j-1 \succeq \alpha(\pi_i+\pi_{i+1}+i-j-1)\,\cdot\]
Using \eqref{eo} of \Lem{lem1}, this is equivalent to saying that
\begin{equation}\label{reverse1}
 \alpha(\pi_i+\pi_{i+1}+i-j)\not \succ \pi_j-1\,\cdot
\end{equation}
If the iteration of \St ceases before $\pi_{k-1}$, we  then have that
\begin{align*}
\delta'^u &= \pi_1\gg \cdots \gg \pi_i+\pi_{i+1}+i-k\\
\gamma'^u,\mu'^u&= \pi_j-1\gg \cdots \gg \pi_{i-1}-1\succ \pi_{i+2}\succ \cdots\succ\pi_r
\end{align*}
so that 
$(\delta^{u+1},\gamma^{u+1},\mu^{u+1})= ((\delta^u,\delta'^u),\gamma'^u,\mu'^u)$. But the inequality  \eqref{reverse1} holds for all $k\leq j\leq i-1$,
 so that by applying $\Psi$ on  $(\delta^{u+1},\gamma^{u+1},\mu^{u+1})$, the secondary part $s(\delta^{u+1})=\pi_i+\pi_{i+1}+i-k$ will recursively cross by \So the parts $\pi_j-1$. 
 The iteration of \So stops before the part $ \pi_{i+2}$ since 
 \[\pi_{i+2}\prec \pi_{j+1} = \beta(\pi_i+\pi_{i+1})\]
 and we split by \St the secondary part  $\pi_i+\pi_{i+1}$ into $\pi_i$ and $\pi_{i+1}$. We then retrieve the sequence $(\delta^u,\gamma^u,\mu^u)$.
 \\\\To conclude, we observe that if $\Phi(\la)\in \E$ has $S$ secondary parts, then the last sequence in the process $\Phi$ is $(\delta^{S+1},\gamma^{S+1},\mu^{S+1})$
 with $\mu^{S+1} = \emptyset$, $\delta^{S+1}$ the partition $\Phi(\la)$ up to the $S^{th}$ secondary part and $\gamma^{S+1}$ the tail to the right of this last secondary part.
 But this triplet is equal to the triplet  $(\delta^v,\gamma^v,\mu^v)$ of \Prp{pr2} for $v=1$. We then recursively obtain the result of \Prp{pr10} in the decreasing order according to $u$.
\subsection{Proof of \Prp{pr5}}
Let us take any $i\in I =\{i_1<\cdots<i_s\}$, let us consider $j = \min(i,p+2s+1]\cap J$. Since in the process of $\Psi$, the primary parts never cross, and the secondary parts can only move forward before splitting,
the part $\nu_j$ will not be affected by $\Psi$ operating on any secondary part to its right. 
\begin{itemize}
 \item Suppose that $\Br(i)=j$. By definition \eqref{br1}, this means that 
 \[\nu_{i'+1}\not \succ \nu_j+\frac{j-i'}{2}-1\]
 for all $i'\in [i,j)\cap I$, so that, by the crossing condition of \St of $\Psi$, $\nu_j+\frac{j-i'}{2}-1$ will recursively be the first primary part that crosses all the secondary parts
$\nu_{i'}+\nu_{i'+1}$ up to $\nu_i+\nu_{i+1}$. Thus, for $i=i_u$
\[s(\delta^{s+1-u})=\nu_{i_u}+\nu_{i_u+1},\quad g(\gamma^{s+1-u})=\nu_j+\frac{j-i_u}{2}-1\,\cdot\]
\item Suppose that $\Br(i)<j$. Let us set $\Br(i)=i'_1$ and  let $i'_1<\cdots<i'_t<j$ be all the fixed points by $\Br$ in $[i,j)$. By \Lem{lem4},  have that 
$$\quad\quad\quad\Br([i,i'_1])=\{i'_1\}\,,\quad \Br((i'_{s-1},i'_s])=\{i'_s\}\quad \text{and for}\quad(i'_t,j)\neq \emptyset\,,\quad \Br((i'_t,j))=\{j\}\,\cdot$$
We then have during the process of $\Psi$ that $\nu_j$ crosses all the secondary parts up to $\nu_{i'_t+2}+\nu_{i'_t+3}$, but does not cross $\nu_{i'_t}+\nu_{i'_t+1}$.
Thus, $\nu_{i'_t}+\nu_{i'_t+1}$ directly splits  into $\nu_{i'_t}$ and $\nu_{i'_t+1}$, and by \eqref{br2} and the crossing condition of \So, $\nu_{i'_t}$ crosses
all the secondary parts up to $\nu_{i'_{t-1}}+\nu_{i'_{t-1}+1}$, which is not crossed.
\\The process then continues and we reach $\nu_{i'_1}+\nu_{i'_1+1}$ which directly splits into $\nu_{i'_1}$ and $\nu_{i'_1+1}$. If $i=i_1$, we have the first statement of \Prp{pr5}. Otherwise,
 $\nu_{i'_1}$ crosses all the secondary parts up to $\nu_{i}+\nu_{i+1}$. We then obtain for for $i=i_u$
\[s(\delta^{s+1-u})=\nu_{i_u}+\nu_{i_u+1},\quad g(\gamma^{s+1-u})=\nu_{i'_1}+\frac{i'_1-i_u}{2}-1\,\cdot\]
\end{itemize}
In any case, if $i=\Br(i)$, then $\nu_i+\nu_{i+1}$ directly splits, otherwise, we have that for $i=i_u$
\[g(\gamma^{s+1-u})=\nu_{\Br(i_u)}+\frac{\Br(i_u)-i_u}{2}-1\,,\]
and the part $\nu_{i_u}+\nu_{i_u+1}$ first crosses the primary part $g(\gamma^{s+1-u})$. 
\subsection{Proof of \Prp{pr6}}\label{pp6}
Let us take $\nu = (\nu_1,\cdots,\nu_{p+2s})$, and $I=\{i_1<\cdots<i_s\}$.
Note that the triplet $(\delta^1,\gamma^1,\mu^1)$ is such that $\mu^1=\emptyset$, $\delta^1$ is the partition $\nu$ up to $\nu_{i_s}+\nu_{i_s+1}$ and $\gamma^1$ is the tail to the right of this part.
We then have that $\gamma^1,\mu^1 \in (\E\cap \Od)\times \Od$ and $\nu_{i_s}+\nu_{i_s+1}=s(\delta^1)\gg g(\gamma^1)$.
\begin{itemize}
 \item If we have that $\Br(i_u)>i_u$, by \Prp{pr5}, we necessarily have that 
\[
g(\gamma^{s+1-u}) = \nu_{\Br(i_u)}+\frac{\Br(i_u)-i_u}{2}-1\,\cdot
\]
But with the condition $(2)$, we have by \eqref{Ordd} and \eqref{oe} that 
\[
\nu_{\Br(i_u)}+\frac{\Br(i_u)-i_u}{2}\not \succ \nu_{i_u}+\nu_{i_u+1} \Longleftrightarrow \nu_{i_u}+\nu_{i_u+1} \gg  \nu_{\Br(i_u)}+\frac{\Br(i_u)-i_u}{2}-1\,\cdot
\]
If $\gamma^{s+1-u}\in \E\cap \Od \subset \Ee$, we then obtain that the partition $s(\delta^{s+1-u}),\gamma^{s+1-u}$ belongs to $\Eee$, so that, by \Lem{lem3} and  \eqref{oe} of \Lem{lem1}, all the crossings in \So of $\Psi$
 are reversible by \St of $\Phi$. We set 
 \[\gamma^{s+1-u} = \pi_1\gg\cdots\gg\pi_r\]
 and if $\nu_{i_u}+\nu_{i_u+1}=s(\delta^{s+1-u})$ crosses all the primary parts up to $\pi_j$, we then have by \eqref{eo} of \Lem{lem1}
 \begin{align*}
\delta^{s+2-u},\gamma^{s+2-u}&=\,\,\delta^{s+1-u}\setminus{\nu_{i_u}+\nu_{i_u+1}}\quad ,\pi_1+1\gg\cdots\gg \pi_j+1\gg\alpha(\nu_{i_u}+\nu_{i_u+1}-j)\\
\mu^{s+2-u}&=\,\,\beta(\nu_{i_u}+\nu_{i_u+1}-j)\succ \pi_{j+1}\succ\cdots\succ \pi_r,\mu_{s+1-u}\,\cdot
\end{align*}
Furthermore, always by condition $(2)$, we have that
\[s(\delta^{s+1-u}\setminus{\nu_{i_u}+\nu_{i_u+1}})=\nu^-(i_u)\gg \nu_{\Br(i_u)}+\frac{\Br(i_u)-i_u}{2}=\pi_1+1\]
so that $\delta^{s+2-u},\gamma^{s+2-u}\in \E$ and  we obtain that $\gamma^{s+2-u}\in \E\cap\Od$ and $s(\delta^{s+2-u})\gg g(\gamma^{s+2-u})$.
\\Moreover, if $\mu_{s+1-u}\in \Od$ and $j<r$, we then have that $(\pi_r,g(\mu_{s+1-u}))$ is the troublesome pair coming from the splitting of 
$\nu_{i_{u+1}}+\nu_{i_{u+1}+1}$ and satisfies $\pi_r\succ g(\mu_{s+1-u})$, so that $\mu^{s+2-u}\in \Od$.
If $\mu_{s+1-u}\in \Od$ and $j=r$, this means that the splitting of $\nu_{i_u}+\nu_{i_u+1}$ occurs in between those of $\nu_{i_{u+1}}+\nu_{i_{u+1}+1}$ and the lower halves are still well-ordered in terms of $\succ$,
so that $\mu^{s+2-u}\in \Od$. In any case, if $\mu_{s+1-u}\in \Od$ (with the previous assumption that $\gamma^{s+1-u}\in \E\cap \Od $), then $\mu^{s+2-u}\in \Od$.\\
\item If we have that $\Br(i_u)=i_u$, then by \Prp{pr5}, the splitting occurs directly and we have 
\[ 
 \nu_{i_u+1}\succ g(\gamma^{s+1-u})\,\cdot
\]
Then we have that  
\begin{align*}
\delta^{s+2-u},\gamma^{s+2-u}&=\,\,\delta^{s+1-u}\setminus{\nu_{i_u}+\nu_{i_u+1}}\quad ,\quad \nu_{i_u}\\
\mu^{s+2-u}&=\nu_{i_u+1},\gamma^{s+1-u},\mu^{s+1-u}\,\cdot
\end{align*}
so that, if $\gamma^{s+1-u}$ and $\mu^{s+1-u}$ are in $\Od$, since $s(\gamma^{s+1-u})\succ g(\mu^{s+1-u})$, we then have that $\mu^{s+2-u}$ is also in $\Od$.
Note that $s(\delta^{s+1-u}\setminus{\nu_{i_u}+\nu_{i_u+1}})=\nu^-(i_u)$.
\begin{itemize}
 \item If $\nu^-(i_u)\triangleright \nu_{i_u}+\nu_{i_u+1}$, then we obtain that
\begin{align*}
\nu^{-}(i_u)-\nu_{i_u}&= \nu^{-}(i_u)-(\nu_{i_u}+\nu_{i_u+1})+\nu_{i_u+1}\\
&\geq 2\quad\quad (\text{by \eqref{Ord} and the fact that}\,\,\nu_{i_u+1}\geq 1)\,,
\end{align*}
so that, by \eqref{cons1} and \eqref{Ord}, $\nu^{-}(i_s)\gg\nu_{i_u}$.
\item In the case that $\nu^{-}(i_s)\nt\nu_{i_u}+\nu_{i_u+1}$, this means by \eqref{Ordd} that we have the case of a pair of secondary parts with colors in $\Ccc$, and which are consecutive for 
$\succ$. Then the pair $(\nu^{-}(i_s),\nu_{i_u}+\nu_{i_u+1})$ has the form $(k_{cd},k_{ab})$ or $((k+1)_{ad},k_{bc})$ for some primary colors $a<b<c<d$. We check the different cases according to the parity of 
$k$ :
\[(2k)_{cd}\gg k_b\,,\quad (2k+1)_{cd}\gg (k+1)_a\,\quad (2k+1)_{ad}\gg k_c\,\quad (2k+2)_{ad}\gg (k+1)_b\,\cdot\]
We then conclude that $\nu^{-}(i_s)\gg\nu_{i_u}$. 
\end{itemize}
In any case, we always have that $\nu_{i_u}$ is well-ordered with the part to its left in terms of $\gg$, 
so that $\delta^{s+2-u},\gamma^{s+2-u} \in \E$, and then $\gamma^{s+2-u}\in \E\cap\Od$ and $s(\delta^{s+2-u})\gg g(\gamma^{s+2-u})$.\\
\end{itemize} 
Note that the process $\Psi$ is reversible by $\Phi$ since the crossings are reversible andso is the splitting. We then obtain \Prp{pr6} recursively on $u$ in decreasing order.
\subsection{Proof of \Prp{pr7}}
Let us take a shortcut $\zeta = \zeta_1+\zeta_2\gg\cdots\gg\zeta_{2s+1}+\zeta_{2s+2}$, and an allowed pattern 
$\eta = \eta_1+\eta_2\gg \cdots \gg \eta_{2t-1}+\eta_{2t}\gg \eta_{2t+1}$ such that $\mathbf{Br}_\eta (1)=2t+1$. Without loss of generality, by adding a constant $k$ to the part $\nu_{2i-1}+\nu_{2i}$, we can suppose that $\zeta_{2s+1}+\zeta_{2s+2}\gg \eta_1+\eta_2$. If we consider the sequence
\[\nu^{(0)}= \zeta_1+\zeta_2\gg\cdots\gg\zeta_{2s+1}+\zeta_{2s+2}\gg\eta_1+\eta_2\gg \cdots \gg \eta_{2t-1}+\eta_{2t}\gg \eta_{2t+1}\,,\]
by adding a large constant $k$ to the parts of the sequence $\nu^{(0)}$, we can say $\eta_{2t+1}$ is the bridge in $\nu$ of all 
$$i\in 2\{0,\ldots,s+t\}+1\,\cdot$$ In fact, by Remark 2.1, we have that the lower halves grow according to $k/2$, so that for some $k$ large enough, $\eta_{2t+1}+k-1$ will be $1$-distant-different from all the lower halves in the sequence $\nu$ in terms of $\succeq$.
We finally consider the sequences of the form
\begin{align*}
\nu^{(u)} &= \zeta_1+\zeta_2+su\gg\cdots\gg\zeta_{2s+1}+\zeta_{2s+2}+su\gg\zeta_1+\zeta_2+s(u-1)\gg\cdots\gg\zeta_{2s+1}+\zeta_{2s+2}+s(u-1)\gg\\
&\qquad \qquad \cdots\gg \zeta_1+\zeta_2+s\gg\cdots\gg\zeta_{2s+1}+\zeta_{2s+2}+s\gg\zeta_1+\zeta_2\gg\cdots\gg\zeta_{2s+1}+\zeta_{2s+2}\gg\\
&  \qquad \qquad\eta_1+\eta_2\gg \cdots \gg \eta_{2t-1}+\eta_{2t}\gg \eta_{2t+1}\,\cdot
\end{align*}
The sequence $\nu$ is well defined, since $\zeta$ is a shorcut, we then have by \eqref{Ord} and \eqref{Ordd} that
\begin{align*}
\overline{\zeta_{2s+1}+\zeta_{2s+2}}\succ \overline{\zeta_1+\zeta_2+1-s}&\Longrightarrow \zeta_{2s+1}+\zeta_{2s+2}\succ \zeta_1+\zeta_2+1-s\\
&\Longrightarrow \zeta_{2s+1}+\zeta_{2s+2}\,\,\triangleright\,\,\zeta_1+\zeta_2-s\\
&\Longrightarrow \zeta_{2s+1}+\zeta_{2s+2}+s\gg\zeta_1+\zeta_2\,,
\end{align*}
so that $\zeta_{2s+1}+\zeta_{2s+2}+su'\gg\zeta_1+\zeta_2+s(u'-1)$ for all $u'\geq 1$. We also have that $\eta_{2t+1}$ is the bridge of all the indices of the secondary parts in $\nu^{(u)}$. In fact, we have by \eqref{aj} that
$$\beta(\zeta_{2s+1}+\zeta_{2s+2}+s)\preceq s+\beta(\zeta_{2s+1}+\zeta_{2s+2})\preceq s+t+\eta_{2t+1} \prec s+t+1+\eta_{2t+1}\,,$$
and we obtain in the same way, that for all $i\in \{0,\ldots,s-1\}$
$$\beta(\zeta_{2i+1}+\zeta_{2i+2}+s)\prec s-i+s+t+1+\eta_{2t+1}\,,$$
so that $\eta_{2t+1}$ is the bridge of all the indices (in the corresponding set $I$) of the parts in $\nu^{(1)}$. Using \eqref{aj} recursively on $u$, we proved that $\eta_{2t+1}$ is indeed the bridge of all indices of the secondary parts in the sequence $\nu^{(u)}$.
\bi To conclude, we see that there are 
$(u+1)(s+1)+t$ secondary parts in $\nu^{(u)}$ (the head included) between $\zeta_1+\zeta_2+su$ and $\eta_{2t+1}$, and we then have 
$$\eta_{2t+1}+ (u+1)(s+1)+t - (\zeta_1+\zeta_2+su) = \eta_{2t+1}- (\zeta_1+\zeta_2)+ t+u+s+1\,\cdot$$
There then exists some $u_0$ such that,
$$\eta_{2t+1}+ (u_0+1)(s+1)+t \succ (\zeta_1+\zeta_2+su_0)\,,$$
so that condition $(2)$ in \Prp{define} is not true. The sequence $\nu^{(u_0)}$ is then a forbidden pattern, and this concludes the proof.
\subsection{Proof of \Prp{pr3}}
Let us take $\nu = (\nu_1,\cdots,\nu_{p+2s})$, with $I=\{i_1<\cdots<i_s\}$ and $J=\{j_1<\cdots<j_p\}$.
\\\\We observe that, in \Prp{pr2},  the sequence $(\delta^v,\gamma^v,\mu^v)$ becomes the sequence $(\delta^{v+1},\gamma^{v+1},\mu^{v+1})$ after applying 
\So once to the secondary part $s(\delta^v)$, and some iterations of \St by crossing the secondary part with some primary parts of $\gamma^v$.
This means that once we obtain the sequence $\mu^v$, it is no longer affected by the process $\Psi$.
\begin{itemize}
 \item Since we never cross two primary parts in the process, once we have the splitting $s(\gamma^{v}),g(\mu^{v})$, their relative position in the remainer of the process $\Psi$
 is unchanged. We then obtain that the upper and the lower halves' positions satisfy $\theta_{i_{s-v+2}}<\theta_{i_{s-v+2}+1}$.\\
 \item The passage from the secondary part $s(\delta^v)$ to its splitting to become $s(\gamma^{v+1}),g(\mu^{v+1})$ implies that
 the position of the lower part increases during the crossings, and then is fixed after the splitting.
 We thus obtain that $\theta_{i_{s+1-v}+1}$ is the position of the $g(\mu^{v+1})$. With the fact that the sequence $g(\mu^v)$ is the strict tail of $g(\mu^{v+1})$, we reach the inequality
 $\theta_{i_{s-v+2}+1}>\theta_{i_{s+1-v}+1}\geq i_{s+1-v}+1$.
 This gives the first inequality of \eqref{i1}.\\
 \item  If the splitting of $s(\delta^v)$ occurs before $g(\gamma^v)$, it means that $g(\gamma^v)$ belongs to $\mu^{v+1}$, and the position of the corresponding upper half is fixed in the rest of the process.
We then have that $\theta_{i_{s-v+2}}>\theta_{i_{s+1-v}+1}\,\cdot$
 Otherwise, the splitting of $s(\delta^v)$ occurs between $g(\gamma^v)$ and $g(\mu^v)$, and the relative position of the corresponding upper halves will not change until the end of the process.
 We thus have that $\theta_{i_{s+1-v}+1}>\theta_{i_{s+1-v}}>\theta_{i_{s-v+2}}$,
 and this leads (recursively on $v$) to the proof of \eqref{ii}.\\
 \item Recall that we never cross two primary parts in the process, and this naturally leads to
 $\theta_{j_v}<\theta_{j_{v+1}}$, 
 for $j_v<j_{v+1}$ and we have \eqref{jj}. Moreover, the primary parts can only move backward, since they can only cross some secondary parts to their left. We then obtain the second inequality of \eqref{i1} 
 $\theta_{j_v}\leq j_v$.\\
. \item Since the crossing only occurs between the secondary and primary parts, if the secondary part corresponding to $i$ does not cross in the primary part corresponding to $j$, then 
 we have that $\theta_{i+1}<\theta_j\,,$
 and if they crossed, then both the upper and the lower halves move together, and in the remainder of the process, their relative positions stay unchanged, so that 
 $\theta_j<\theta_{i}$, and we obtain \eqref{ij}.
\end{itemize}
 \subsection{Proof of \Prp{pr4}}
 We saw in the previous proof that, since the positions of the lower halves are increasing, 
for any $i_u\in I$, the crossings can occur with primary parts coming from some indices $J$ or in $I$. We then look for $x\in J\cup I$ such that $x>i_u$ and $\theta_x<\theta_{i_u}$.
Let us then set $\{x_1,\ldots, x_v\}=\{x\in J\cup I: x>i_u,\,,\theta_{x}<\theta_{i_u}\}$ such that
$$
 \theta_{x_1}<\cdots<\theta_{x_v}\,\cdot
$$
Note that if $\{x\in J\cup I: x>i_u,\,,\theta_{x}<\theta_{i_u}\} = \emptyset$, then the splitting occurs directly and \[\Br(i_u)=i_u=\max_{x\in I}\{x\geq i_u , \theta_x\leq \theta_{i_u}\}\,\cdot\]
 Recall that if $\{x\in J\cup I: x>i_u,\,,\theta_{x}<\theta_{i_u}\}\neq \emptyset$, we then have 
$$
  \theta_{x_v}<\theta_{i_u}<\theta_{i_u+1}\quad\text{and}\quad x_1,\ldots, x_v>i_u\,\cdot
$$
 \begin{itemize}
  \item If $\{x_1,\ldots, x_v\}\cap J \neq \emptyset$, then we necessarily have that $x_1\in J$. In fact, suppose that $x_1\in I$ and $x_1<x \in \{x_1,\ldots, x_v\}\cap J$.
  Since $x_1>i_u$, by \eqref{ii}, we have $\theta_{i_u+1}<\theta_{x_1+1}$
  and then 
  \[\theta_{x_1}<\theta_x<\theta_{i_u}<\theta_{i_u+1}<\theta_{x_1+1}\,,\]
  and this contradicts \eqref{ij}. Furthermore, by \eqref{jj}, we have that 
$$
   x_1 = \min \{x_1,\ldots, x_v\}\cap J= \min_{x\in J}\{ x>i_u , \theta_x<\theta_{i_u}\}\,\cdot
 $$
  \item Otherwise, we have $\{x_1,\ldots, x_v\}\cap J =\emptyset$.  In that case, $\{x_1,\ldots, x_v\}\subset I$.
  We then have that 
  $x_1>\cdots>x_v$. In fact, for any $x<x'\in \{x_1,\ldots, x_v\}$, by \eqref{ii}, we have 
  \[\theta_{i_u+1}<\theta_{x+1}<\theta_{x'+1}\,,\]
  and if we suppose that $\theta_{x}<\theta_{x'}$,
  we then obtain the inequality 
  \[\theta_{x}<\theta_{x'}<\theta_{i_u}<\theta_{i_u+1}<\theta_{x+1}<\theta_{x'+1}\,,\]
  and this contradicts \eqref{ii}.
  Furthermore, this leads to the following relation
  $$
    x_1 = \max \{x_1,\ldots, x_v\}=\max_{x\in I}\{x\geq i_u , \theta_x\leq \theta_{i_u}\}\,\cdot
 $$
  \end{itemize}
In any case, by \Prp{pr5}, we have that $x_1=\Br(i)$. In fact, $x_1$ is the index of the first crossed part.

\end{document}